\tikzset{
>=stealth',
punkt/.style={
           rectangle,
           rounded corners,
           draw=black, very thick,
           text width=6.5em,
           minimum height=2em,
           text centered},
pil/.style={
           ->,
           thick,
           shorten <=2pt,
           shorten >=2pt,}
}
\newcommand\numberthis{\addtocounter{equation}{1}\tag{\theequation}}
\theoremstyle{plain}
\newtheorem{thm}{Theorem}
\newtheorem{theorem}[thm]{Theorem}
\newtheorem{corollary}[thm]{Corollary}
\newtheorem{lemma}[thm]{Lemma}
\newtheorem{proposition}[thm]{Proposition}
\newtheoremstyle{exm}
{9pt}{9pt}{}{}{\bfseries}{}{.5em}{}
\theoremstyle{exm}
\newtheorem{exm}[thm]{Example}
\newtheoremstyle{rmk}
{9pt}{9pt}{}{}{\bfseries}{}{.5em}{}
\theoremstyle{rmk}
\newtheorem{rmk}[thm]{Remark}
\theoremstyle{alg}
\newtheoremstyle{question}
{9pt}{9pt}{}{}{\bfseries}{}{.5em}{}
\theoremstyle{question}
\numberwithin{equation}{section}
\numberwithin{thm}{section}
\numberwithin{figure}{section}
\theoremstyle{definition}
\newtheorem{definition}[thm]{Definition}
\newtheorem{defin}[thm]{Definition}
\newtheorem{example}[thm]{Example}
\newtheorem{remark}[thm]{Remark}
\newtheorem{qn}{Question}
\newcommand{\field}[1]{\mathbb{#1}}
\newcommand{\C}{\field{C}}
\newcommand{\R}{\field{R}}
\newcommand{\Z}{\field{Z}}
\newcommand{\Q}{\field{Q}}
\newcommand{\ind}{\operatorname{Ind}}
\newcommand{\Hi}{\mathrm{H}}
\def \I {\textrm{ Ind}}
\def \ssp {S \smallsetminus S_P}
\def \hp {\mathrm{H}_P}
\def \ob {G_{\mathbb{C}}/B}
\def \ll {{\mathbb L}}
\def \tn {\tilde{n}}
\def \tnb {\tilde{n}_{\beta}}
\def \gp {{G_{\mathbb{C}}/P}}
\DeclareMathOperator{\hh}{H}
\title[Generalizing the Mukai Conjecture]{Generalizing the Mukai Conjecture to the
symplectic category and the Kostant game}
\author[A. Caviedes Castro]{Alexander Caviedes Castro}
\address{Department Mathematik/Informatik, Universit\"at zu K\"oln,
  Weyertal 86-90, 50931 K\"oln, Germany.}
\email{alephxander@gmail.com}
\author[M. Pabiniak]{Milena Pabiniak}
\address{Department Mathematik/Informatik, Universit\"at zu K\"oln,
  Weyertal 86-90, 50931 K\"oln, Germany.}
\email{milenapabiniak@gmail.com}
\author[S. Sabatini]{Silvia Sabatini}
\address{Department Mathematik/Informatik, Universit\"at zu K\"oln,
  Weyertal 86-90, 50931 K\"oln, Germany.}
\email{sabatini@math.uni-koeln.de}
\date{\today}
\begin{document}

\maketitle

\begin{abstract}
In this paper we pose the question of whether the (generalized) Mukai inequalities hold for compact, positive monotone symplectic manifolds.
We first provide a method that enables one to check whether the (generalized) Mukai inequalities hold true. This only makes use of the almost complex structure
of the manifold
and the analysis of the zeros of the so-called generalized Hilbert polynomial, which takes into account the Atiyah-Singer indices of all possible line bundles.

We apply this method to generalized flag varieties. In order to find the zeros of the corresponding generalized Hilbert polynomial we introduce a modified version of the Kostant game and study its combinatorial properties.
\end{abstract}

%\tableofcontents

\section{Introduction}

In 1988 Mukai \cite{mukai} conjectured that a Fano variety $M$ of complex dimension $n$ with index $k_0$ and Picard number $b$ should satisfy the following inequality:
$$
n \geq b(k_0-1)\,,
$$
with equality if and only if $M$ is $(\C P^{k_0-1})^b$. This conjecture was generalized by Bonavero, Casagrande, Debarre and Druel in \cite{bcdd}, where the
index above is replaced by the pseudoindex $\rho_M$, defined as the minimum of the evaluation of the anticanonical divisor $-K_M$ on rational curves on $M$.
There has been extensive work towards proving these two inequalities in a variety of cases, but general proofs of these conjectures are still missing.

In this paper we start investigating similar questions in a different category. Namely, suppose that $(M,\omega)$ is a compact symplectic manifold with first Chern class of the tangent bundle given by $c_1$. The \textit{index} of $(M,\omega)$ can be defined as the largest integer $k_0$ satisfying $c_1=k_0 \eta$ for some primitive element $\eta\in H^2(M;\Z)$,
modulo torsion elements. The symplectic analogues of Fano varieties are known in the literature as \textit{positive monotone symplectic manifolds}, namely symplectic manifolds
satisfying $c_1=[\omega]$. As for Fano varieties the Picard number is exactly the second Betti number, the following question arises naturally:
\begin{qn}
Let $(M,\omega)$ be a compact, positive monotone symplectic manifold of dimension $2n$ with second Betti number $b_2$ and index $k_0$. Does the following inequality hold?
\begin{equation}\label{mukai}
n\geq b_2(k_0-1)\quad\quad\quad\text{\textbf{Symplectic Mukai inequality}}
\end{equation}
\end{qn}
For positive monotone symplectic manifolds the concept of pseudoindex can also be generalized, namely:
Consider an embedded symplectic sphere $S^2$ and observe that, under the positive monotonicity assumption,
$c_1[S^2]$ is a positive integer. Therefore one can define the \textit{pseudoindex} $\rho_0$ of $(M,\omega)$ as
$$
\rho_0:=\min\{c_1[S^2]\mid S^2 \text{ symplectic sphere embedded in }(M,\omega)\}.
$$
In analogy with the generalized Mukai conjecture for Fano varieties we pose the following:
\begin{qn}\label{gen mukai}
Let $(M,\omega)$ be a compact, positive monotone symplectic manifold of dimension $2n$ with second Betti number $b_2$ and pseudoindex $\rho_0$. Does the following inequality hold?
\begin{equation}\label{gen mukai ine}
n\geq b_2(\rho_0-1)\quad\quad\quad \text{\textbf{Generalized symplectic Mukai inequality}}
\end{equation}
\end{qn}
One of the reasons to believe that such inequalities should hold true for positive monotone symplectic manifolds is that recent work has shown that,
at least under some mild symmetry assumptions, this category behaves very similarly to that of Fano varieties, see for instance \cite{fp, lp, ss}.

As Questions 1 and 2 have been so far investigated only for Fano varieties, many of the tools used to answer them are, not surprisingly, algebraic geometric.
Therefore, the first step to tackle them is to generalize the methods with which they can be proved to a category of spaces that includes that of positive monotone symplectic manifolds.

In Section \ref{sec:zeros} we introduce the so-called \emph{generalized Hilbert polynomial} $\Hi$ for a compact almost complex manifold $(M,J)$. This polynomial
takes into account the Atiyah-Singer indices of all the possible line bundles on $(M,J)$. One of the main results of the section is Theorem \ref{theorem hilbert polynomial},
in which it is proved that the presence of a so-called ``pointed box of zeros'' of $\Hi$ (see Definition \ref{pointed box}) implies an inequality that is related to the
Mukai inequality and its generalization. This relation is investigated in Corollary \ref{corollary hilbert polynomial} and Corollary \ref{GKM monotone}.
In particular the latter gives conditions under which a positive monotone Hamiltonian GKM space satisfies an inequality that is indeed stronger than the generalized Mukai
inequality (see \eqref{bound degree H3}).

One of the goals of Section \ref{mukai and kostant} is to verify the
hypotheses of Corollary \ref{GKM monotone} for generalized flag
varieties, thus proving for them the existence of a ``pointed box of
zeros'' of the generalized Hilbert polynomial $\Hi$, see Theorem
\ref{thm strong}. We point out that the corresponding inequality
\eqref{strong inequality} in Corollary \ref{inequality pasquier} has
already been proved by Pasquier \cite{pasquier}, where he checked it
through explicit calculations that depend on the type of the Dynkin
diagram of the compact Lie group and the parabolic subgroup. The
proof that we present in this paper relies on the combinatorics of
the \textit{(modified) Kostant game}, see Subsections
\ref{sc:kostant game} and \ref{sc:modified kostant game}. One of the
main purposes of the already known Kostant game (see for instance
\cite{postnikov, elek}) is to enumerate the positive roots of a
compact Lie group from its Dynkin diagram. In Subsection
\ref{sc:modified kostant game} we explain a modified version of the
Kostant game, which allows us to index the linear factors of $\Hi$,
thus finding its zeros.\footnote{The referee let us know that the
modified version of the Kostant game already appeared in
\cite{nakajima}.} It turns out that the moves of the modified
Kostant game are in one to one correspondence with the reduced
expressions of the minimal length representatives in the posets of
the quotient of a Weyl group with a parabolic subgroup. This fact is
purely combinatorial and can be generalized to any Weyl group and
any of its parabolic subgroups.

\subsection*{Acknowledgments} 
The authors would like to thank the referee for its careful reading
of the paper and the suggestions that improved the exposition.
The authors were partially supported by
SFB-TRR 191 grant {\it Symplectic Structures in Geometry, Algebra and
  Dynamics} funded by the Deutsche Forschungsgemeinschaft.

%%%%%%
\section{Zeros of the generalized Hilbert polynomial and Mukai inequalities}\label{sec:zeros}
Let $(M,J)$ be a compact, connected, almost complex manifold of dimension $2n$. Consider the cohomology group
$H^2(M;\Z)$ and the following map
\begin{center}
\begin{tabular}{r  r c l }
$\widetilde{H} \colon$ & $H^2(M;\Z)$ & $\longrightarrow$ & $\Z$   \\
                                & $\eta$ & $\mapsto$  & $\ind (e^{2\pi \mathrm{i}\eta})$,
\end{tabular}
\end{center}
where  the first Chern class of the line bundle $e^{2\pi
\mathrm{i}\eta}$ is exactly $\eta$. We refer to it as the
\emph{index map}, as it computes the indices of all the possible
line bundles on $(M,J)$. Thanks to a simple observation (see for instance \cite[Lemma 4.1]{sabatini}), the
map $\widetilde{H}$ is well-defined on the lattice $\mathcal{L}$
given by the quotient $H^2(M;\Z)/\operatorname{Tor}(H^2(M;\Z))$,
where $\operatorname{Tor}(H^2(M;\Z))$ denotes the torsion subgroup
of $H^2(M;\Z)$. By abuse of notation, let $\widetilde{H}\colon
\mathcal{L}\to \Z$ be the induced index map.

Let $b_2$ denote the second Betti number of $M$, which is therefore exactly the rank of $\mathcal{L}$. Once a $\Z$-basis $\tau_1,\ldots,\tau_{b_2}$
of $\mathcal{L}$ is chosen, we can identify $\mathcal{L}$ with $\Z^{b_2}$
\begin{center}
\begin{tabular}{c l c }
$\mathcal{L}$ & $\longrightarrow$ & $\Z^{b_2}$   \\
$\eta=k_1 \tau_1 + \cdots + k_{b_2}\tau_{b_2}$ & $\mapsto$ & $(k_1,\ldots,k_{b_2})$
\end{tabular}
\end{center}
and obtain a map $\mathrm{H}\colon \Z^{b_2}\to \Z$ given by
\begin{equation}\label{def hilbert}
\mathrm{H}(k_1,\ldots,k_{b_2})= \ind (e^{2\pi \mathrm{i}\eta}),\quad\text{where}\quad \eta=k_1\tau_1+\cdots + k_{b_2}\tau_{b_2}.
\end{equation}
\begin{lemma}\label{lemma hilbert polynomial}
The map $\mathrm{H}\colon \Z^{b_2}\to \Z$ defined in \eqref{def hilbert} is a polynomial in $k_1,\ldots,k_{b_2}$ of degree at most $n$.
\end{lemma}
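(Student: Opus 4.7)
The plan is to invoke the Atiyah--Singer index theorem applied to the spin-c Dirac operator canonically associated with the almost complex structure $J$. For any complex line bundle $L\to M$ with first Chern class $c_1(L)$, this theorem gives
$$\ind(L)=\int_M e^{c_1(L)}\operatorname{Td}(TM),$$
where $\operatorname{Td}(TM)$ is the Todd class of $TM$ viewed as a complex vector bundle via $J$, and the integral denotes pairing of the degree-$2n$ component with the fundamental class $[M]$. Since the line bundle $e^{2\pi\mathrm{i}\eta}$ is, by definition, the one with first Chern class $\eta$, this formula is the natural starting point.

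Writing $\eta=k_1\tau_1+\cdots+k_{b_2}\tau_{b_2}$ and expanding
$$e^{\eta}=\sum_{m=0}^{n}\frac{1}{m!}\Bigl(\sum_{j=1}^{b_2}k_j\tau_j\Bigr)^{m}$$
(the sum truncates at $m=n$ since $H^{\ast}(M;\Q)$ vanishes above degree $2n$), each summand is a homogeneous polynomial of degree $m$ in $k_1,\ldots,k_{b_2}$ whose coefficients lie in $H^{2m}(M;\Q)$ and are built from products of the $\tau_j$'s. Multiplying by $\operatorname{Td}(TM)$ and extracting the top degree part yields
$$\bigl[e^{\eta}\operatorname{Td}(TM)\bigr]_{2n}=\sum_{m=0}^{n}\frac{\eta^{m}}{m!}\,\operatorname{Td}_{n-m}(TM)\in H^{2n}(M;\Q),$$
which is a polynomial of total degree at most $n$ in the $k_j$'s with coefficients in $H^{2n}(M;\Q)$. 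Pairing with $[M]$ then produces a rational-valued polynomial in $k_1,\ldots,k_{b_2}$ of degree at most $n$, whose values at every integer lattice point agree with the integer $\mathrm{H}(k_1,\ldots,k_{b_2})=\ind(e^{2\pi\mathrm{i}\eta})$. This identifies $\mathrm{H}$ with (the restriction of) such a polynomial.

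The essentially only subtlety is justifying the use of the index formula in the possibly non-integrable setting: the almost complex structure $J$ canonically determines a spin-c structure on $M$, and the resulting twisted spin-c Dirac operator plays the role of the (unavailable) twisted Dolbeault complex, with index still computed by the twisted Todd integral displayed above. Once this standard fact is in hand, the argument is just the degree-counting exercise carried out, and the bound $\deg \mathrm{H}\le n$ follows because $e^{\eta}\operatorname{Td}(TM)$ cannot contribute to degree $2n$ via any power of $\eta$ higher than $\eta^{n}$.
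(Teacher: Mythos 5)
Your argument is correct and is essentially the paper's own proof: both apply the Atiyah--Singer cohomological index formula $\ind(e^{2\pi\mathrm{i}\eta})=\mathrm{Ch}(e^{2\pi\mathrm{i}\eta})\mathcal{T}[M]$, expand the Chern character exponential, and note that powers of $\eta$ beyond $\eta^{n}$ die for degree reasons, giving a polynomial of degree at most $n$ in the coefficients $k_1,\ldots,k_{b_2}$. Your extra remark on the spin-c Dirac operator in the non-integrable case is a welcome clarification but does not change the argument.
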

\begin{proof}
This lemma is a direct consequence of the Atiyah-Singer cohomological formula for the index \cite{AS}, as for $\eta=k_1\tau_1+\cdots k_{b_2}\tau_{b_2}$, the index of
$e^{2\pi \mathrm{i}\eta}$ is given by
\begin{align*}
\ind(e^{2\pi \mathrm{i}\eta}) & = \text{Ch}(e^{2\pi \mathrm{i}\eta})\mathcal{T}[M]=\text{Ch}(e^{2\pi \mathrm{i}(k_1\tau_1+\cdots + k_{b_2}\tau_{b_2})})\mathcal{T}[M]\\
                                            & = \left( \sum_{l\geq 0} \frac{(k_1\tau_1+\cdots + k_{b_2}\tau_{b_2})^l}{l!}\right)\mathcal{T}[M]\\
                                            %h_1\in \Z_{\geq 0}}\frac{(k_1\tau_1)^{h_1}}{h_1!}\cdot\, \cdots\, \cdot \sum_{h_{b_2}\in \Z_{\geq 0}}\frac{(k_{b_2}\tau_{b_2})^{h_{b_2}}}{h_{b_2}!}\right) \\
                                            & = \sum_{I}a_I k_1^{l_1}\cdot\,\cdots\,\cdot k_{b_2}^{l_{b_2}}\,,
\end{align*}
where the sum runs over all the multi-indices $I=(l_1,\ldots,l_{b_2})$ satisfying $l_j\in \Z_{\geq 0}$ for all $j$ and $l_1+\cdots + l_{b_2}\leq n$, $\mathcal{T}$
is the total Todd class of $M$ and $[M]$ is the orientation class of $(M,J)$ in homology.
\end{proof}
Observe that the coefficients $a_I$ above are rational numbers given by rational combinations of the evaluation on the homology class $[M]$ of products of Chern classes of $(M,J)$ and of
the classes $\tau_1,\ldots,\tau_{b_2}$.

Lemma \ref{lemma hilbert polynomial} allows us to define the following polynomial on $\mathbb{C}^{b_2}$, which is exactly the (unique) polynomial extension of $\mathrm{H}$
from $\Z^{b_2}$ to $\C^{b_2}$ and, by abuse of notation, is still denoted by $\mathrm{H}$.
\begin{defin}\label{def hilbert polynomial}
The \textbf{generalized Hilbert polynomial} is the polynomial map
\begin{center}
\begin{tabular}{r  c c l }
$\mathrm{H} \colon$ & $\C^{b_2}$ & $\longrightarrow$ & $\C$   \\
                                & $(z_1,\ldots,z_{b_2})$ & $\mapsto$  & $ \sum_{I}a_I z_1^{l_1}\cdot\,\cdots\,\cdot z_{b_2}^{l_{b_2}}$,
\end{tabular}
\end{center}
where the sum runs over all the multi-indices $I=(l_1,\ldots,l_{b_2})$ satisfying $l_j\in \Z_{\geq 0}$ for all $j$ and $l_1+\cdots + l_{b_2}\leq n$, and the $a_I$'s are defined in the
proof of Lemma \ref{lemma hilbert polynomial}.
\end{defin}

\begin{exm}
Let $(M,J)$ be a compact almost complex manifold of dimension 4 with Chern classes of $(TM,J)$ given by $c_1$ and $c_2$. The total Todd class is given
in this case by $\mathcal{T}=1+\frac{c_1}{2}+\frac{c_1^2+c_2}{12}$.
Assume that $\mathcal{L}$ has dimension 2 and let
$\tau_1,\tau_2$ be one of its bases. Let $c_1=\alpha_1\tau_1 + \alpha_2 \tau_2$, where $\alpha_1,\alpha_2\in \Z$.
Then
\begin{align*}
\mathrm{H}(k_1,k_2) &= e^{2\pi \mathrm{i} (k_1 \tau_1 + k_2 \tau_2)}\mathcal{T}[M] = \left( 1+k_1\tau_1 + k_2 \tau_2 + \frac{(k_1\tau_1+k_2\tau_2)^2}{2}\right)
(1+\frac{c_1}{2}+\frac{c_1^2+c_2}{12})[M]\\
& = k_1^2 \frac{\tau_1^2}{2}[M]+ k_2^2 \frac{\tau_2^2}{2}+ k_1 k_2 \tau_1\tau_2[M]+ k_1\frac{\alpha_1\tau_1^2+\alpha_2 \tau_1\tau_2}{2}[M]+ k_2 \frac{\alpha_1 \tau_1 \tau_2 + \alpha_2 \tau_2^2}{2}[M] \\
& + \mathrm{Todd}(M),
\end{align*}
where $\mathrm{Todd}(M)$ denotes the Todd genus of $M$ and is given by $\frac{c_1^2+c_2}{12}[M]$. In the notation introduced in Lemma \ref{lemma hilbert polynomial} we obtain $a_{(2,0)}= \frac{\tau_1^2}{2}[M]$, $a_{(0,2)}=\frac{\tau_2^2}{2}$, $a_{(1,1)}=\tau_1\tau_2[M]$ and so on.
\end{exm}

The following theorem is one of the main results of this section.

\begin{theorem}\label{theorem hilbert polynomial}
Let $(M, J)$ be a compact, connected, almost complex manifold of
dimension $2n$ with second Betti number $b_2$ and first Chern class $c_1$.
Let $\tilde{\mathcal{L}}$ be a full rank
sublattice of $\mathcal{L}=H^2(M;\Z)/\operatorname{Tor}(H^2(M;\Z))$
and $\{\eta_1, \ldots, \eta_{b_2}\}$ be a  $\Z$-basis of
$\tilde{\mathcal{L}}$ such that $c_1\in \tilde{\mathcal{L}}$ and
$$
c_1=\sum_{i=1}^{b_2}m_i\eta_i
$$
for some positive integers $m_i\in \Z_{>0}\,.$

Let $\widetilde{H}\colon \mathcal{L}\to \Z$ be the index map and assume that $$\widetilde{H}(-c_1)\neq 0,$$ and
\begin{equation}\label{zeros Hil}
\widetilde{H}(-k_1\eta_1- \cdots -k_{b_2}\eta_{b_2})=0,
\end{equation}
for all $k_1,\ldots,k_{b_2}\in \Z$ such that $0<k_i\leq m_i$ for all $i=1,\ldots,b_2$ and
$\sum_{i=1}^{b_2}k_i<\sum_{i=1}^{b_2}m_i.$
Let $\Hi\in \Q[z_1,\ldots,z_{b_2}]$ be the generalised Hilbert polynomial.
Then
\begin{equation}\label{bound degree H}
n\geq \operatorname{deg}(\mathrm{H})\geq
{\sum_{i=1}^{b_2}(m_i-1)}\,.
\end{equation}
\end{theorem}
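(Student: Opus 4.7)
The plan is to translate the hypotheses into a statement about the vanishing locus of a single polynomial of the same degree as $\Hi$, and then to derive the degree lower bound via Alon's Combinatorial Nullstellensatz.

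First, I would build a polynomial adapted to the basis $\{\eta_1,\ldots,\eta_{b_2}\}$ by setting
$$p(z_1,\ldots,z_{b_2}):=\widetilde{H}\bigl(-z_1\eta_1-\cdots-z_{b_2}\eta_{b_2}\bigr),$$
interpreted via the polynomial extension of $\widetilde{H}$ guaranteed by Lemma \ref{lemma hilbert polynomial}. Because $\{\eta_i\}$ generates a full rank sublattice of $\mathcal{L}$, passing from the basis $\{\tau_i\}$ defining $\Hi$ to $\{\eta_i\}$ is a linear substitution invertible over $\Q$, so $\deg p = \deg \Hi$. Combined with $n \geq \deg \Hi$ from Lemma \ref{lemma hilbert polynomial}, it therefore suffices to prove $\deg p \geq \sum_i(m_i-1)$.

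Next, I would read off the vanishing locus of $p$. Inside the box $\mathcal{B}:=\{1,\ldots,m_1\}\times\cdots\times\{1,\ldots,m_{b_2}\}$, the only point with $\sum k_i=\sum m_i$ is $(m_1,\ldots,m_{b_2})$, so the hypothesis \eqref{zeros Hil} is equivalent to $p$ vanishing on $\mathcal{B}\setminus\{(m_1,\ldots,m_{b_2})\}$, while the extra hypothesis $\widetilde{H}(-c_1)\neq 0$ says $p(m_1,\ldots,m_{b_2})\neq 0$.

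The heart of the argument is comparison with the explicit ``missing-corner'' interpolant
$$L(z_1,\ldots,z_{b_2}):=\prod_{i=1}^{b_2}\prod_{j=1}^{m_i-1}(z_i-j),$$
of degree exactly $\sum_i(m_i-1)$, which also vanishes on $\mathcal{B}\setminus\{(m_1,\ldots,m_{b_2})\}$ and evaluates to $\prod_i(m_i-1)!\neq 0$ at the missing corner. Setting $c:=p(m_1,\ldots,m_{b_2})/L(m_1,\ldots,m_{b_2})$, the polynomial $f:=p-cL$ then vanishes on the \emph{entire} box $\mathcal{B}$. Suppose for contradiction that $\deg p<\sum_i(m_i-1)$. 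Then $\deg f=\deg L=\sum_i(m_i-1)$ and the coefficient of the monomial $z_1^{m_1-1}\cdots z_{b_2}^{m_{b_2}-1}$ in $f$ equals $-c\neq 0$. Applying Alon's Combinatorial Nullstellensatz with the sets $S_i:=\{1,\ldots,m_i\}$ of size $m_i=(m_i-1)+1$ then produces a point of $S_1\times\cdots\times S_{b_2}=\mathcal{B}$ at which $f$ does not vanish, contradicting the preceding step.

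The main obstacle I anticipate is not any single calculation but choosing the right interpolant $L$ and being careful about degrees under the basis change $\{\tau_i\}\to\{\eta_i\}$; once these are in place, the Combinatorial Nullstellensatz completes the argument mechanically.
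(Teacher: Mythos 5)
Your argument is correct, and it establishes exactly the bound claimed, but the combinatorial core is handled by a genuinely different device than in the paper. Both proofs begin identically: pass from $\Hi$ to the polynomial adapted to the basis $\{\eta_1,\ldots,\eta_{b_2}\}$ (your $p$; the paper's $\Hi'$, which it further shifts to $Q(z)=\Hi'(-z_1-1,\ldots,-z_{b_2}-1)$), use invertibility of the basis change over $\Q$ to preserve the degree, and observe that the hypotheses \eqref{zeros Hil} and $\widetilde{H}(-c_1)\neq 0$ say precisely that this polynomial vanishes on a box minus one corner and is nonzero at that corner. At that point the paper proves its auxiliary lemma from scratch: it expands the polynomial in the product falling-factorial basis $M_{(s_1,\ldots,s_l)}(z)=\prod_i z_i(z_i-1)\cdots(z_i-s_i+1)$ and shows by induction on the height $k_1+\cdots+k_l$ that every coefficient indexed by a point of the pointed box vanishes while the corner coefficient does not, forcing $\deg\geq\sum_i(m_i-1)$. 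You instead subtract the explicit interpolant $L=\prod_i\prod_{j=1}^{m_i-1}(z_i-j)$ so that $f=p-cL$ vanishes on the whole grid $\{1,\ldots,m_1\}\times\cdots\times\{1,\ldots,m_{b_2}\}$, and then invoke Alon's Combinatorial Nullstellensatz (with $|S_i|=m_i>m_i-1$ and top coefficient $-c\neq 0$) to get a contradiction with $\deg p<\sum_i(m_i-1)$; all the quantitative checks in your sketch (equivalence of $\sum k_i<\sum m_i$ with avoiding the corner inside the box, $L$ vanishing off the corner, the corner value $\prod_i(m_i-1)!$, the coefficient of $z_1^{m_1-1}\cdots z_{b_2}^{m_{b_2}-1}$ in $f$, and the degenerate case $m_i=1$ for all $i$) are sound. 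The trade-off is the expected one: the paper's route is self-contained and elementary, essentially a finite-difference/interpolation argument in the binomial-type basis, while yours is shorter and modular, outsourcing the induction to a standard black box; it is worth noting that your $L$ is, up to the paper's coordinate shift, exactly the basis element $M_{(m_1-1,\ldots,m_{b_2}-1)}$ whose coefficient the paper shows to be nonzero, so the two arguments are close relatives even though the logical scaffolding (contradiction via the Nullstellensatz versus direct coefficient extraction by induction on height) is genuinely different.
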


\begin{rmk}
It is easy to see that the first inequality in  \eqref{bound degree H} is an equality whenever $c_1^n[M]\neq 0$. This holds for instance
when $M$ is a complex Fano variety.
\end{rmk}

In order to have a better understanding of this theorem we introduce the following terminology.
\begin{definition}\label{pointed box}
For $m=(m_1, \ldots, m_l)\in \Z_{\geq 0}^l$, we call the set
$$\{(k_1, \ldots, k_l)\in \Z^l_{\geq 0}\mid (k_1, \ldots, k_l)\leq (m_1,
\ldots, m_l) \text{ and }(k_1,\ldots,k_l)\neq (m_1,\ldots,m_l)\}$$
 a \textit{pointed box at $m$}\,. An affine
transformation of a pointed box is also called \textit{pointed
box}\,.
\end{definition}
With this definition at hand, Theorem \ref{theorem hilbert
polynomial} means that the line bundles corresponding to the integral
points of $\tilde{\mathcal{L}}$ inside the ``pointed box'' of the figure sketched below, namely the white circles,
have index zero.

\begin{figure}[!ht]
\centering
\begin{tikzpicture}
\draw [->] (0,-3) -- (0,3); \draw [->] (-4,0) -- (3,0); \draw
[thick,->] (0,0) -- (-1,0) node[above] {$-\eta_1$}; \draw [thick,->]
(0,0) -- (0,-1) node[right] {$-\eta_2$}; \node at (-3,-2)
{\textbullet}; \node[below] at (-3,-2) {$-c_1$}; \foreach \Point in
{(-1,-1), (-2,-1),(-3,-1), (-1,-2), (-2,-2)}{
    \node at \Point {$\circ$};
} \draw[thin, dashed] (-0.5, -0.5) rectangle (-3.5,-2.5);
\end{tikzpicture}
\end{figure}

The proof of Theorem \ref{theorem hilbert polynomial} relies on the following algebraic fact.
First of all we set the following definitions.

\begin{itemize}
\item Let $\leq$ be the partial order on $\Z^l$, defined as follows:
$$
(s_1, \ldots, s_l)\leq (k_1, \ldots, k_l)\,\, \text{if and only
if}\,\, s_1\leq k_1, \ldots, s_l\leq k_l\,.
$$
\item Given the standard order on $\Z$, we define the following order preserving function
\begin{align*}
\operatorname{ht}:\Z^l&\to \Z\\
(k_1, \ldots, k_l) &\mapsto \operatorname{ht}(k_1, \ldots,
k_l):=k_1+\ldots+k_l
\end{align*}
and we call it \textit{height}\,.
\end{itemize}

\begin{lemma}
Let $P\in \Q[z_1, \ldots, z_l]$ and assume that there exists $(m_1,
\ldots, m_l)\in \Z_{\geq 0}$ such that $P(m_1, \ldots, m_l)\ne 0$
and $P(k_1, \ldots, k_l)=0$ for every $(k_1, \ldots, k_l)\in
\Z^l_{\geq 0}$ such that $(k_1, \ldots, k_l)\leq (m_1, \ldots, m_l)$ and $(k_1, \ldots, k_l)\neq (m_1, \ldots, m_l)$.
Then
\begin{equation}\label{claim lemma}
\operatorname{deg}P\geq \operatorname{ht}(m_1, \ldots,
m_l)=m_1+\ldots+m_l\,.
\end{equation}
\end{lemma}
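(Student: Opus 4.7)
The plan is to prove the bound by a finite-difference argument, which avoids any induction on the number of variables $l$. Define the partial finite-difference operators on $\Q[z_1,\ldots,z_l]$ by $(\Delta_i Q)(z):=Q(z+e_i)-Q(z)$ for $i=1,\ldots,l$, where $e_i$ is the $i$-th standard basis vector of $\Z^l$. An elementary monomial-by-monomial check shows that for every nonzero polynomial $Q$ one has $\operatorname{deg}(\Delta_i Q)\leq \operatorname{deg}(Q)-1$, where by convention the zero polynomial is assigned degree $-\infty$. Iterating this estimate gives $\operatorname{deg}(\Delta_1^{m_1}\cdots \Delta_l^{m_l}P)\leq \operatorname{deg}(P)-(m_1+\cdots+m_l)$ whenever the left-hand side is nonzero.

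The next step is to exploit that the operators $\Delta_i$ pairwise commute, together with the standard one-variable expansion $\Delta_i^{m_i}Q(z)=\sum_{k_i=0}^{m_i}(-1)^{m_i-k_i}\binom{m_i}{k_i}Q(z+k_i e_i)$. Applying these formulas in succession and evaluating at the origin I expect to obtain the identity
$$
(\Delta_1^{m_1}\cdots \Delta_l^{m_l}P)(0,\ldots,0)=\sum_{0\leq k_i\leq m_i}(-1)^{\sum_i(m_i-k_i)}\prod_{i=1}^{l}\binom{m_i}{k_i}\,P(k_1,\ldots,k_l).
$$
By the hypothesis of the lemma, every summand with $(k_1,\ldots,k_l)\neq(m_1,\ldots,m_l)$ is zero, so the right-hand side collapses to the single term indexed by $(m_1,\ldots,m_l)$, which equals $P(m_1,\ldots,m_l)\neq 0$. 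In particular, the polynomial $\Delta_1^{m_1}\cdots \Delta_l^{m_l}P$ is not identically zero, so its degree is at least $0$.

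Combining the two observations yields
$$
0\leq \operatorname{deg}(\Delta_1^{m_1}\cdots \Delta_l^{m_l}P)\leq \operatorname{deg}(P)-(m_1+\cdots+m_l),
$$
which rearranges to \eqref{claim lemma}. The argument reduces to two entirely routine verifications (the degree-drop property of $\Delta_i$ on monomials, and the iterated binomial identity above), so there is no real technical obstacle; the main step is recognizing that finite differences are the right tool. A more pedestrian alternative would be induction on $l$, applying the inductive hypothesis to the reduced polynomial $Q(z_1,\ldots,z_{l-1}):=(\Delta_l^{m_l}P)(z_1,\ldots,z_{l-1},0)$ in order to drop the $l$-th coordinate; that works equally well but is strictly less transparent than the symmetric finite-difference version sketched above.
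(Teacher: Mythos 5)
Your argument is correct. The degree-drop property $\operatorname{deg}(\Delta_i Q)\leq\operatorname{deg}(Q)-1$ is immediate on monomials, the operators commute, the multivariate alternating-binomial expansion of $(\Delta_1^{m_1}\cdots\Delta_l^{m_l}P)(0,\ldots,0)$ is standard, and the vanishing hypothesis kills every term of that sum except the corner term $P(m_1,\ldots,m_l)\neq 0$, so the iterated difference is a nonzero polynomial and the degree inequality follows. This is a genuinely different route from the paper's: there the polynomial is expanded in the basis $M_{(s_1,\ldots,s_l)}(z_1,\ldots,z_l)=M_{s_1}(z_1)\cdots M_{s_l}(z_l)$ of products of falling factorials, and a finite induction on the height $k_1+\cdots+k_l$ shows that every coefficient indexed by a point of the pointed box vanishes while the coefficient at $(m_1,\ldots,m_l)$ does not. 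The two proofs are cousins --- by Newton's forward-difference formula the coefficient $h_{(s_1,\ldots,s_l)}$ in the paper's expansion is exactly $(\Delta_1^{s_1}\cdots\Delta_l^{s_l}P)(0)/(s_1!\cdots s_l!)$ --- but your version replaces the basis expansion and the induction on height by a single closed-form evaluation, which is shorter and arguably more transparent; the paper's version yields as a by-product the vanishing of all the lower-order coefficients (information not needed for the inequality, and in any case also recoverable from your identity applied at the lower lattice points).
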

\begin{proof}
The idea of the proof is to write the polynomial $P$ in terms of an
appropriate basis, show that one of the coefficients of $P$ with
respect to this basis is non zero and that the degree of the
corresponding basis element equals $\operatorname{ht}(m_1, \ldots,
m_l)\,.$

For $s \in \Z_{\geq 0},$ let
$$
M_{s+1}(z):=1\cdot z\cdot (z-1)\cdot \ldots \cdot
(z-s)=\prod_{j=0}^s(z-j)
$$
and $M_0(z):=1\,.$ The set $\{M_s(z)\}_{s\in \mathbb{Z}_{\geq 0}}$
defines a basis of $\Q[z]\,.$ Note that for an integer $k\in
\Z_{\geq 0}\,$
\begin{equation}\label{cases M}
M_s(k)=
\begin{cases}
k! & \text{if }k=s\\
0 & \text{if }k<s\,.
\end{cases}
\end{equation}
It can be easily checked that a basis of $\Q[z_1, \ldots, z_l]$ is the set
$$
\{M_{(s_1, \ldots, s_l)}(z_1, \ldots,
z_l):=M_{s_1}(z_1)\cdot\ldots\cdot M_{s_l}(z_l)\}_{(s_1, \ldots,
s_l)\in \mathbb{Z}_{\geq 0}^l}\,.
$$
Therefore we can write any polynomial $P\in \Q[z_1, \ldots, z_l]$ as a linear
combination of the elements of this basis
\begin{equation}\label{P in basis}
P(z_1, \ldots, z_l)=\sum_{(s_1, \ldots, s_l)\in \mathbb{Z}^l_{\geq
0}}h_{(s_1, \ldots, s_l)}\cdot M_{(s_1, \ldots, s_l)}(z_1, \ldots,
z_l)
\end{equation}
for some rational numbers $h_{(s_1, \ldots, s_l)}$, such that all except
a finite number of them are equal to zero.

We show the following\\$\;$\\
\emph{Claim}: For a polynomial $P$ satisfying the assumptions of the
Lemma, the coefficient $h_{(k_1, \ldots, k_l)}$ equals zero for
every $(0, \ldots, 0)\leq (k_1, \ldots, k_l)\leq (m_1, \ldots, m_l)$ and $(k_1, \ldots, k_l)\neq (m_1, \ldots, m_l)$.
Moreover the coefficient $h_{(m_1, \ldots, m_l)}$ is nonzero.
\\$\;$\\
The claim implies that
$$\deg(P)\geq \deg(M_{(m_1, \ldots, m_l)})=m_1+\cdots+m_l\,,$$
which is the desired inequality \eqref{claim lemma}.

From \eqref{cases M}, the definition of $M_{(s_1,\ldots,s_l)}$ and \eqref{P in basis} it follows that
\begin{equation}\label{P in basis 2}
P(k_1, \ldots, k_l)=\sum_{(0, \ldots, 0)\leq (s_1, \ldots,
s_l)\leq(k_1, \ldots, k_l)}h_{(s_1, \ldots, s_l)}\cdot M_{(s_1,
\ldots, s_l)}(k_1, \ldots, k_l)
\end{equation}
for any $(k_1, \ldots, k_l)\in \Z^l_{\geq 0}\,.$ In particular
$P(0)=0$, unless $(m_1, \ldots,m_l)=(0, \ldots, 0)$, in which case we are done.
We prove the claim
using finite induction on the height function: assume that $h_{(k_1,
\ldots, k_l)}=0$ for all $(k_1, \ldots, k_l)\in \Z^l_{\geq 0}$ with
$(k_1, \ldots, k_l)\leq (m_1, \ldots, m_l)$ and
$\operatorname{ht}(k_1, \ldots, k_l)\leq k<\operatorname{ht}(m_1,
\ldots, m_l)\,.$ We show that for any $(k_1, \ldots,
k_l)\in \Z^l_{\geq 0}$ with $(k_1, \ldots, k_l)\leq (m_1, \ldots,
m_l)$ and $\operatorname{ht}(k_1, \ldots, k_l)=k+1$, either $(k_1,
\ldots, k_l)\neq (m_1, \ldots, m_l)$ and $h_{(k_1, \ldots, k_l)}=0$ or
$(k_1, \ldots, k_l)=(m_1, \ldots, m_l)$ and $h_{(m_1, \ldots,
m_l)}\ne 0\,.$ The induction on the height stops when we reach
$(k_1, \ldots, k_l)=(m_1, \ldots, m_l)$, which is the only maximal
element in the set $\{(s_1, \ldots, s_l)\in \Z^l_{\geq 0}:(s_1,
\ldots, s_l):(s_1, \ldots, s_l)\leq (m_1, \ldots, m_l)\}$ with
respect to the order that we defined above.

Let $(k_1,\ldots,k_l)\in \Z_{\geq 0}$ be such that $\operatorname{ht}(k_1,\ldots,k_l)=k+1$. Then from
\eqref{P in basis 2} and the induction hypothesis we obtain
$$
P(k_1, \ldots, k_l)= h_{(k_1,\ldots,k_l)} M_{(k_1,\ldots,k_l)}(k_1,\ldots,k_l),
$$
as any
$(s_1,\ldots,s_l)$ satisfying $(0,
\ldots, 0)\leq (s_1, \ldots, s_l)\leq (k_1, \ldots, k_l)$ and $(s_1,\ldots,s_l)\neq (k_1,\ldots,k_l)$ has height
less than $k+1$, and therefore by induction satisfies $h_{(s_1,\ldots,s_l)}=0$.

If $(k_1, \ldots, k_l)\leq (m_1, \ldots, m_l)$ and $(k_1, \ldots, k_l)\neq (m_1, \ldots, m_l)$, then
$h_{(k_1, \ldots, k_l)}=0$ as $P(k_1, \ldots, k_l)=0$ by the assumption of the Lemma
and $M_{(k_1, \ldots, k_l)}(k_1, \ldots, k_l)=k_1!\cdot\ldots\cdot
k_l!\ne 0\,.$ Likewise, if $(k_1, \ldots, k_l)=(m_1, \ldots, m_l),$
then $P(m_1,\ldots,m_l)\neq 0$ implies $h_{(m_1, \ldots, m_l)}\ne 0$. This finishes the proof of the claim and hence
of the Lemma.

\end{proof}

%\begin{definition}\label{pointed box}
%For $m=(m_1, \ldots, m_l)\in \Z_{\geq 0}^l$, we call the set
%$$\{(k_1, \ldots, k_l)\in \Z^l_{\geq 0}\mid (k_1, \ldots, k_l)\leq (m_1,
%\ldots, m_l) \text{ and }(k_1,\ldots,k_l)\neq (m_1,\ldots,m_l)\}$$
% a \textit{pointed box at $m$}\,. An affine
%transformation of a pointed box is also called \textit{pointed
%box}\,.
%\end{definition}
%With this terminology, Theorem \ref{theorem hilbert
%polynomial} means that the line bundles corresponding to the integral
%points of $\tilde{\mathcal{L}}$ inside the ``pointed box'' of the figure sketched below, namely the white circles,
%have index zero.
%
%
%\begin{figure}[!ht]
%\centering
%\begin{tikzpicture}
%\draw [->] (0,-3) -- (0,3); \draw [->] (-4,0) -- (3,0); \draw
%[thick,->] (0,0) -- (-1,0) node[above] {$-\eta_1$}; \draw [thick,->]
%(0,0) -- (0,-1) node[right] {$-\eta_2$}; \node at (-3,-2)
%{\textbullet}; \node[below] at (-3,-2) {$-c_1$}; \foreach \Point in
%{(-1,-1), (-2,-1),(-3,-1), (-1,-2), (-2,-2)}{
%    \node at \Point {$\circ$};
%} \draw[thin, dashed] (-0.5, -0.5) rectangle (-3.5,-2.5);
%\end{tikzpicture}
%\end{figure}

\begin{proof}[Proof of Theorem \ref{theorem hilbert polynomial}]
The first inequality, $n\geq \operatorname{deg}\Hi$, comes from Lemma \ref{lemma hilbert polynomial} and the definition of generalized Hilbert polynomial.
In order to prove the second inequality we introduce the following:
In analogy with the definition of the generalised Hilbert polynomial $\Hi(z_1,\ldots,z_{b_2})$, we define the polynomial $\Hi'(z_1,\ldots,z_{b_2})$ that at integral values $(k_1,\ldots,k_{b_2})\in \Z^{b_2}$ satisfies
$$\Hi'(k_1,\ldots,k_{b_2}):=\widetilde{H}(k_1\eta_1+\cdots + k_{b_2} \eta_{b_2})\,.$$
We observe that, as
 $\tilde{\mathcal{L}}$
is a full rank sublattice of $\mathcal{L}$, $\Hi(z_1,\ldots,z_{b_2})$ and $\Hi'(z_1,\ldots,z_{b_2})$ differ only by a linear, invertible transformation of the coordinates $(z_1,\ldots,z_{b_2})$,
namely there exists a linear, invertible transformation $A\colon \C^{b_2}\simeq H^2(M;\C)\to \C^{b_2}\simeq H^2(M;\C)$ satisfying $A(\tau_j)=\eta_j$
for all $j=1,\ldots,b_2$, implying
$\Hi'(z_1,\ldots,z_{b_2})=\Hi(A(z_1,\ldots,z_{b_2}))$.
From the fact that $A$ is linear and invertible we have $\deg(\Hi')=\deg(\Hi)$.

Consider now the polynomial
$$
Q(z_1,\ldots,z_{b_2}):=\Hi'(-z_1-1,\ldots,-z_{b_2}-1)\,,
$$
whose degree is clearly the degree of $\Hi'$, and hence the degree of $\Hi$.

We recall that the first Chern class $c_1$ satisfies $c_1=\sum_{j=1}^{b_2} m_i \eta_i$ for some positive integers $m_1,\ldots,m_{b_2}$ and observe that the polynomial
$Q$ satisfies
$$
Q(m_1-1,\ldots,m_{b_2}-1)=\Hi'(-m_1,\ldots,-m_{b_2})=\widetilde{H}(-m_1\eta_1-\cdots - m_{b_2}\eta_{b_2}),
$$
the latter being nonzero by assumption. Moreover by \eqref{zeros Hil} we have
$
Q(k_1,\ldots,k_{b_2})=0
$ for all $(k_1,\ldots,k_{b_2})\in \Z^{b_2}_{\geq 0}$ such that $(k_1,\ldots,k_{b_2})\leq (m_1-1,\ldots,m_{b_2}-1)$ and $(k_1,\ldots,k_{b_2})\neq (m_1-1,\ldots,m_{b_2}-1)$. Therefore Lemma \ref{lemma hilbert polynomial} implies that
$$
\deg(\Hi)=\deg(Q)\geq \sum_{i=1}^{b_2}(m_i-1)\,,
$$
which concludes the proof.
\end{proof}

We are now ready to derive the corollaries of this section that concern the Mukai inequality.
Suppose that $\{\tau_1,\ldots,\tau_{b_2}\}$ is a basis of $\mathcal{L}$ such that
$c_1=\sum_{i=1}^{b_2} n_i \tau_i$ for some positive integers $n_i$, for all $i=1,\ldots,b_2$. Observe that the index $k_0$ of
$(M,J)$ satisfies $k_0=\gcd(n_1,\ldots,n_{b_2})$.

\begin{corollary}\label{corollary hilbert polynomial}
Let $(M, J)$ be a compact, connected, almost complex manifold of
dimension $2n$ with second Betti number $b_2$, first Chern class $c_1$ and index $k_0$.

Let $\{\tau_1,\ldots,\tau_{b_2}\}$ be a basis of $\mathcal{L}=H^2(M;\Z)/\operatorname{Tor}(H^2(M;\Z))$
such that $$c_1=\sum_{i=1}^{b_2} n_i \tau_i$$
for some positive integers $n_i\in \Z_{>0}\,.$ For $i=1,\ldots,b_2$ define $\displaystyle\eta_i:=\frac{n_i}{k_0}\tau_i\in
\mathcal{L}$  and assume that the index map
$\widetilde{H}\colon \mathcal{L}\to \Z$ satisfies
 $$\widetilde{H}(-c_1)\neq 0,$$ and
\begin{equation}\label{zeros Hil2}
\widetilde{H}(-k_1\eta_1- \cdots -k_{b_2}\eta_{b_2})=0,
\end{equation}
for all $k_1,\ldots,k_{b_2}\in \Z$ such that $0<k_i\leq k_0$ for all $i=1,\ldots,b_2$ and
$\sum_{i=1}^{b_2}k_i< k_0b_2$.

Let $\Hi\in \Q[z_1,\ldots,z_{b_2}]$ be the generalised Hilbert polynomial.
Then $(M,J)$ satisfies the Mukai inequality, more precisely
\begin{equation}\label{bound degree H2}
n\geq \operatorname{deg}(\mathrm{H})\geq b_2(k_0-1)\,.
\end{equation}
\end{corollary}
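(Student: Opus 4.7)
The plan is to recognize that this corollary is a direct application of Theorem \ref{theorem hilbert polynomial}, once we choose the right full-rank sublattice. The key observation is that with $k_0=\gcd(n_1,\ldots,n_{b_2})$, each ratio $n_i/k_0$ is a positive integer, so each $\eta_i=(n_i/k_0)\tau_i$ lies in $\mathcal{L}$. The set $\{\eta_1,\ldots,\eta_{b_2}\}$ is linearly independent over $\Q$ (being positive integer multiples of a basis of $\mathcal{L}$), hence it forms a $\Z$-basis of the full-rank sublattice $\tilde{\mathcal{L}}:=\bigoplus_i \Z \eta_i\subset \mathcal{L}$.

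Next I would express $c_1$ in this new basis. From $c_1=\sum_{i=1}^{b_2} n_i\tau_i=\sum_{i=1}^{b_2} k_0\cdot\frac{n_i}{k_0}\tau_i=k_0\sum_{i=1}^{b_2}\eta_i$, so in the basis $\{\eta_i\}$ one has $c_1=\sum_{i=1}^{b_2} m_i\eta_i$ with $m_i=k_0$ for every $i$. In particular $c_1\in\tilde{\mathcal{L}}$, and the hypothesis $c_1=\sum m_i\eta_i$ with $m_i\in\Z_{>0}$ required by Theorem \ref{theorem hilbert polynomial} is fulfilled.

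I would then check that the two vanishing assumptions of Theorem \ref{theorem hilbert polynomial} are precisely the two assumptions of this corollary. The condition $\widetilde{H}(-c_1)\neq 0$ is literally assumed. The vanishing condition \eqref{zeros Hil} requires $\widetilde{H}(-\sum k_i\eta_i)=0$ for all $(k_1,\ldots,k_{b_2})\in \Z^{b_2}$ with $0<k_i\leq m_i=k_0$ and $\sum k_i<\sum m_i=b_2 k_0$, which is exactly condition \eqref{zeros Hil2}.

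With these hypotheses verified, Theorem \ref{theorem hilbert polynomial} yields
\begin{equation*}
n\geq \deg(\Hi)\geq \sum_{i=1}^{b_2}(m_i-1)=\sum_{i=1}^{b_2}(k_0-1)=b_2(k_0-1),
\end{equation*}
which is inequality \eqref{bound degree H2}. There is no real obstacle here beyond the bookkeeping of passing from the basis $\{\tau_i\}$ of $\mathcal{L}$ to the basis $\{\eta_i\}$ of $\tilde{\mathcal{L}}$ which renders the expansion of $c_1$ uniform with all coefficients equal to $k_0$; once that change of basis is made, the conclusion is immediate from Theorem \ref{theorem hilbert polynomial}.
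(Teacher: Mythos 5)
Your proposal is correct and follows essentially the same route as the paper: define the full-rank sublattice $\Z\langle \eta_1,\ldots,\eta_{b_2}\rangle$, note that $c_1=\sum_{i=1}^{b_2}k_0\eta_i$ so that all $m_i=k_0$, check the vanishing hypotheses coincide with those of Theorem \ref{theorem hilbert polynomial}, and apply that theorem to get $n\geq \deg(\mathrm{H})\geq b_2(k_0-1)$. No gaps; your write-up simply spells out the bookkeeping the paper leaves implicit.
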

\begin{proof}
The first inequality is a consequence of Lemma \ref{lemma hilbert polynomial} and the definition of Hilbert polynomial. The second inequality
follows easily from Theorem \ref{theorem hilbert polynomial}, as the set $\{\eta_1,\ldots,\eta_{b_2}\}$
is $\Z$-independent and the lattice $\mathcal{L}'$ given by
$\Z\langle \eta_1, \ldots,\eta_{b_2}\rangle$ is a full rank sublattice of $\mathcal{L}$. Moreover $c_1\in \mathcal{L}'$ and it is given by
$c_1=\sum_{i=1}^{b_2}k_0 \eta_i$.
\end{proof}

The second corollary of Theorem \ref{theorem hilbert polynomial} concerns the generalized Mukai inequality for a positive monotone symplectic manifold with pseudoindex $\rho_0$.
Henceforth we focus on the following category of spaces, which is also that appearing in the next section, and which enables us to find a special
basis of $H^2(M;\Z)$ such that the coefficients of the first Chern class in this basis are symplectic volumes of some special embedded spheres.

Suppose that $(M,\omega)$ is a compact symplectic manifold endowed with a Hamiltonian action of a compact torus $T$: We recall that
the action of a compact torus $T$ on a symplectic manifold $(M,\omega)$ is called Hamiltonian if there exists a $T$-invariant map $\psi\colon M \to \operatorname{Lie}(T)^*$, called
\textbf{moment map}, satisfying
$$
d \langle \psi(\cdot), \xi \rangle = -\iota_{\xi^\#}\omega\,,
$$
where $\langle \cdot , \cdot \rangle$ denotes the natural pairing between ${Lie}(T)^*$ and ${Lie}(T)$, $\xi^\#$ is the vector field generated by the action
and $\iota_{\xi^\#}\cdot$ denotes the contraction operator. The function $\psi^{\xi}\colon M \to \R$
defined as $\psi^\xi(\cdot)=\langle \psi(\cdot), \xi \rangle$
is called the $\xi$-component of the moment map.
The triple $(M,\omega,\psi)$ is called a \textbf{(compact) Hamiltonian $T$-space}. In this paper we consider
Hamiltonian $T$-spaces whose fixed point set --denoted by $M^T$-- is discrete and hence, by compactness of $M$, finite.
Moreover we assume that $(M,\omega,\psi)$ is a \textbf{GKM (Goresky-Kottwitz-MacPherson) space}, namely,
for every codimension one subtorus $H$,
the connected components of the set of points fixed by $H$ is of dimension at most 2. This condition can also be rephrased as follows.
Consider the isotropy action of $T$  on the tangent space at a fixed point $p\in M^T$ and its corresponding weights, called \textbf{isotropy weights} of $p$.
Then the action is GKM if and only if for every $p\in M^T$ the weights of the isotropy action are pairwise linearly independent.
(See \cite{gkm} for the original reference or for instance \cite[Chapter 11]{gs-supersymmetry}.) Indeed the two-dimensional components mentioned above
are spheres corresponding exactly to the fixed points of the codimension one subtorus $\exp\{\xi \in \operatorname{Lie}(T)\mid \langle \alpha , \xi \rangle =0 \}$.
Note that, as they are fixed components of a subgroup of $T$, they are symplectic submanifolds of $M$.
The restriction of the $T$-action to any of those has exactly two fixed points $p,q\in M^T$, and the intersection properties of the set of these isotropy spheres
is encoded in a graph $(V,E_{GKM})$ called the \textbf{GKM graph}: the vertex set is exactly the fixed point set $M^T$, and every edge represents an
isotropy sphere.
Henceforth we restrict to Hamiltonian $T$-spaces whose action is GKM.

The basis of $H^2(M;\Z)$ that allows us to translate Theorem \ref{theorem hilbert polynomial} in terms of the pseudoindex comes from a well-known
basis in the equivariant cohomology ring of $(M,\omega,\psi)$, called the \textbf{canonical basis}. In order to introduce it we need the following terminology:
Consider a generic component $\psi^\xi$ of the moment map, where generic means that $\langle \alpha, \xi \rangle\neq 0$ for every isotropy weight $\alpha$ occurring at a fixed
point $p$. Define $\lambda_p$ to be the number of negative weights at $p$, namely the number of isotropy weights $\alpha$ of $p$ such that $\langle \alpha, \xi \rangle < 0$, and let $\Lambda_p^-$ be the product of these weights. We say that a generic component $\psi^\#$ is \textbf{index increasing} if $\lambda_p<\lambda_q$
for every edge connecting $p$ to $q$ in $E_{GKM}$ such that $\psi^\xi(p)< \psi^\psi(q)$.

Since the manifold is acted on upon a torus $T$, which we assume to have dimension $d$, it is useful to consider the equivariant cohomology ring with $\Z$-coefficients $H^*_T(M;\Z)$, which
has the structure of a $H^*_T(\{p\};\Z)$-module, where $\{p\}$ is simply a point. Once a basis $\{x_1,\ldots,x_d\}$ of the dual lattice of $\operatorname{Lie}(T)^*$
is chosen, the latter can be identified with the polynomial ring $\Z[x_1,\ldots,x_d]$. Notice that $\Lambda_p^-\in \Z[x_1,\ldots,x_d]$. More generally the restriction
$\tau(p)$ to a fixed point $p$ of a class $\tau\in H^*_T(M;\Z)$ lives in this polynomial ring.

The theorem below was proved by Guillemin and Zara \cite{gz} over the rationals and then extended to the integers by Goldin and Tolman \cite{gt}.
\begin{theorem}\label{canonical basis}
Let $(M,\omega,\psi)$ be a compact Hamiltonian $T$-space such that the $T$-action is GKM. Suppose that there exists a $\xi\in \operatorname{Lie}(T)$
such that $\psi^\xi$ is index increasing. Then for every $p\in M^T$ there exists a unique element $\tilde{\tau}_p\in H^{2\lambda_p}_T(M;\Z)$
satisfying the following properties:
\begin{itemize}
\item[(i)] $\tilde{\tau}_p(p)=\Lambda_p^-$;
\item[(ii)] $\tilde{\tau}_p(q)=0$ for all $q\in M^T\setminus\{p\}$ such that $\lambda_q\leq \lambda_p$.
\end{itemize}
Moreover the set $\{\tilde{\tau}_p\}_{p\in M^T}$ of these elements is a basis of $H_T^*(M;\Z)$ as a module over $\Z[x_1,\ldots,x_d]$ and is called
the \textbf{canonical basis} of $(M,\omega,\psi)$ w.r.t. the component $\psi^\xi$.
\end{theorem}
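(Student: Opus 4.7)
The plan is to prove the theorem in three steps: set up the GKM framework, prove uniqueness via a degree argument, and construct existence by induction on the Morse index; the basis property then follows from a triangularity observation.

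Since $M^T$ is finite and every Morse index $2\lambda_p$ of $\psi^\xi$ is even, the space $(M,\omega,\psi)$ is equivariantly formal. Consequently the restriction map
$$r\colon H^*_T(M;\Z) \longrightarrow \bigoplus_{p\in M^T} H^*_T(\{p\};\Z) \cong \bigoplus_{p\in M^T} \Z[x_1,\ldots,x_d]$$
is injective, and its image is characterized by the GKM edge conditions: for every edge $e = pq$ of the GKM graph with weight $\alpha_e$, an admissible tuple $\sigma$ satisfies $\sigma(p) \equiv \sigma(q) \pmod{\alpha_e}$. This reduces the entire problem to a combinatorial question about tuples of polynomials indexed by $M^T$.

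For uniqueness, suppose $\tilde{\tau}_p$ and $\tilde{\tau}_p'$ both satisfy (i) and (ii), and set $\sigma := \tilde{\tau}_p - \tilde{\tau}_p' \in H^{2\lambda_p}_T(M;\Z)$. Then $\sigma$ vanishes at every fixed point of index at most $\lambda_p$. If $\sigma \neq 0$, pick $q_0$ of minimal $\psi^\xi$ value among those where $\sigma$ does not vanish; necessarily $\lambda_{q_0} > \lambda_p$. The index-increasing hypothesis guarantees that for each of the $\lambda_{q_0}$ edges at $q_0$ along a negative isotropy weight $\alpha$, the other endpoint lies strictly below $q_0$ in $\psi^\xi$ and so $\sigma$ vanishes there; the GKM edge congruence then forces $\alpha \mid \sigma(q_0)$ for each such $\alpha$. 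By pairwise linear independence of isotropy weights, $\Lambda_{q_0}^-$ divides $\sigma(q_0)$, but $\deg \Lambda_{q_0}^- = \lambda_{q_0} > \lambda_p = \deg \sigma(q_0)$, a contradiction.

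For existence, induct on a linear extension of $p \mapsto \lambda_p$, with base case $\tilde{\tau}_{p_{\min}} = 1$. For a general $p$, build the tuple on the GKM graph vertex by vertex: declare $\tilde{\tau}_p(p) = \Lambda_p^-$ and $\tilde{\tau}_p(q) = 0$ for every $q \neq p$ with $\lambda_q \leq \lambda_p$, then extend upward in $\psi^\xi$ by solving the GKM edge congruences at each new vertex, producing integer polynomials of degree $\lambda_p$ satisfying prescribed divisibility by the pairwise linearly independent negative weights there. The basis property then follows by triangularity: ordering the fixed points by nondecreasing $\lambda$, the matrix $(\tilde{\tau}_p(q))_{p,q}$ is block upper-triangular with diagonal entries $\Lambda_p^-$, and any $\gamma \in H^*_T(M;\Z)$ admits a unique expansion $\gamma = \sum_p c_p(x)\tilde{\tau}_p$ by downward induction on $\lambda$, the coefficients being determined stepwise by dividing residual restrictions by $\Lambda_p^-$. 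The principal obstacle throughout is \emph{integrality}: the rational version (Guillemin-Zara) follows rapidly from localization, whereas producing $\Z$-valued canonical classes (Goldin-Tolman) requires a delicate inductive construction that exploits the pairwise coprimality of isotropy weights together with the index-increasing hypothesis to keep the $\Z[x_1,\ldots,x_d]$-coefficients integral at every stage.
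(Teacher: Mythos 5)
The paper offers no proof of this theorem: it is quoted from Guillemin--Zara (rational coefficients) and Goldin--Tolman (integral coefficients), so there is no internal argument to compare yours with. Judged on its own merits, your uniqueness argument is correct and standard: the difference class vanishes at all points of index at most $\lambda_p$, the down-going edges at a $\psi^\xi$-lowest non-vanishing vertex $q_0$ force divisibility of $\sigma(q_0)$ by $\Lambda_{q_0}^-$ (divisibility in $\Q[x_1,\ldots,x_d]$ already suffices), and the degree count gives a contradiction; note this part does not even use the index-increasing hypothesis, consistent with the literature.

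The existence and basis parts, however, contain genuine gaps. First, you take for granted the \emph{integral} GKM description of the image of $r$: under the stated hypotheses (isotropy weights only pairwise linearly independent) the edge congruences are necessary for a class in $H^*_T(M;\Z)$, but they are not known to characterize the image over $\Z$ without extra primitivity assumptions; only your uniqueness argument gets by with the easy direction. Second, and more seriously, the step ``extend upward in $\psi^\xi$ by solving the GKM edge congruences at each new vertex, producing integer polynomials of degree $\lambda_p$'' is exactly the content of the theorem and is merely asserted: at a vertex $q$ with $\lambda_q$ down-edges you must produce a homogeneous polynomial of degree $\lambda_p<\lambda_q$ satisfying $\lambda_q$ prescribed congruences, and such classes need not exist for an arbitrary generic component -- the index-increasing hypothesis is what rescues existence, yet your sketch never shows where it is used. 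Third, integrality: pairwise linear independence of weights does not give pairwise coprimality in $\Z[x_1,\ldots,x_d]$ (e.g.\ $2x$ and $2y$), so both the division by $\Lambda_p^-$ of the residual restrictions in your triangularity/expansion argument and the claim that the upward extension can be kept integral are unjustified over $\Z$; divisibility by each negative weight separately does not imply divisibility by their product in $\Z[x_1,\ldots,x_d]$. You yourself flag integrality as ``the principal obstacle,'' but the outline does not overcome it; Goldin--Tolman do so by a geometric, Morse-theoretic construction through the critical levels (equivariant Thom classes of negative bundles), not by solving congruences on the GKM graph. As written, your argument is closer to a plausible sketch of the rational Guillemin--Zara statement than a proof of the integral theorem quoted in the paper.
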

Consider the natural restriction $r\colon H^*_T(M;\Z)\to H^*(M;\Z)$. Another virtue of this basis is that it restricts to a basis of $H^*(M;\Z)$ (regarded as a
$\Z$-module). Therefore the elements $\{\tau_p:=r(\tilde{\tau}_p), \lambda_p=1\}$ form a basis of $H^2(M;\Z)$. By abuse of notation we refer to these elements
as the canonical basis of $H^2(M;\Z)$.

Before proving the main property of this canonical basis, which allows us to link Theorem \ref{theorem hilbert polynomial} to the generalized Mukai conjecture,
we observe the following:
\begin{itemize}
\item Since $(M,\omega)$ is compact and symplectic, $H^2(M;\Z)\neq 0$, therefore there must be fixed points $p$ with $\lambda_p=1$;
\item For every fixed point $p$ with $\lambda_p=1$ there exists a unique symplectic, invariant sphere containing the (unique) minimum $p_0$ of $\psi^\xi$
and $p$ as respectively ``south'' and ``north pole''. Let $S^2_1,\ldots,S^2_{b_2}$ be the collection of these spheres.
\end{itemize}
We are ready for the following
\begin{proposition}\label{coefficients volumes}
Let $(M,\omega,\psi)$ be a compact Hamiltonian $T$-space such that the $T$-action is GKM. Suppose that there exists a $\xi\in \operatorname{Lie}(T)$
such that $\psi^\xi$ is index increasing and consider the canonical basis $\{\tau_1,\ldots,\tau_{b_2}\}$ described above.

Let $\alpha\in H^2(M;\Z)$ and write it as
$\alpha=\sum_{i=1}^{b_2}m_i\tau_i$, where $m_i\in \Z$ for all $i=1,\ldots,b_2$. Let $S^2_1,\ldots,S^2_{b_2}$ be the collection of symplectic spheres
connecting $p_0$ to the fixed points $p$ with $\lambda_p=1$.
Then, modulo reordering the elements $\tau_i$,
$$
m_i= \int_{S^2_i}\alpha,\quad\text{for all }i=1,\ldots,b_2\,.
$$
\end{proposition}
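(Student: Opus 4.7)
The plan is to establish that, after an appropriate reordering, the canonical basis $\{\tau_1,\ldots,\tau_{b_2}\}$ is dual to the collection of homology classes $\{[S^2_1],\ldots,[S^2_{b_2}]\}$ under the natural pairing, namely $\int_{S^2_j}\tau_i=\delta_{ij}$. Once this duality is in hand, writing $\alpha=\sum_i m_i\tau_i$ and integrating over $S^2_j$ gives $\int_{S^2_j}\alpha=m_j$ immediately by linearity.

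To set up the duality I would index the fixed points with $\psi^\xi$-index one as $p_1,\ldots,p_{b_2}$, so that $\tau_i=r(\tilde\tau_{p_i})$ for the equivariant canonical basis element $\tilde\tau_{p_i}$ of Theorem \ref{canonical basis}, and label the spheres so that $S^2_i$ is the unique $T$-invariant symplectic sphere joining the minimum $p_0$ of $\psi^\xi$ to $p_i$. Since $\tilde\tau_{p_i}\in H^2_T(M;\Z)$ and $S^2_j$ is a $T$-invariant closed symplectic $2$-submanifold, the equivariant integral of $\tilde\tau_{p_i}$ over $S^2_j$ lies in $H^0_T(\mathrm{pt};\Z)=\Z$ and coincides with the ordinary integral $\int_{S^2_j}\tau_i$. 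Applying the Atiyah-Bott-Berline-Vergne localization formula to the $T$-action on $S^2_j$, whose fixed set is $\{p_0,p_j\}$, one obtains
\begin{equation*}
\int_{S^2_j}\tau_i\;=\;\frac{\tilde\tau_{p_i}(p_0)}{e_T(T_{p_0}S^2_j)}+\frac{\tilde\tau_{p_i}(p_j)}{e_T(T_{p_j}S^2_j)}\,.
\end{equation*}

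The next step is to evaluate both summands using the defining properties (i) and (ii) of the canonical basis. Since $p_0$ is the global minimum of $\psi^\xi$, all of its isotropy weights are $\xi$-positive, so $\lambda_{p_0}=0<1=\lambda_{p_i}$ and property (ii) forces $\tilde\tau_{p_i}(p_0)=0$, killing the first summand. For the second, the tangent weight of $S^2_j$ at $p_j$ must be $\xi$-negative because along $S^2_j$ the moment map $\psi^\xi$ decreases from $p_j$ to $p_0$; since $\lambda_{p_j}=1$, this is the \emph{unique} negative isotropy weight at $p_j$, so $e_T(T_{p_j}S^2_j)=\Lambda_{p_j}^-$. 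When $i=j$, property (i) gives $\tilde\tau_{p_j}(p_j)=\Lambda_{p_j}^-$ and the quotient equals $1$, while for $i\neq j$ the inequality $\lambda_{p_j}\leq\lambda_{p_i}$ combined with property (ii) forces $\tilde\tau_{p_i}(p_j)=0$. Combining yields $\int_{S^2_j}\tau_i=\delta_{ij}$.

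The most delicate point is the identification $e_T(T_{p_j}S^2_j)=\Lambda_{p_j}^-$: this requires combining the GKM description, in which $S^2_j$ is precisely the edge of the GKM graph joining $p_0$ to $p_j$, with the geometric observation that the moment map on $S^2_j$ attains its maximum at $p_j$, together with the fact that the condition $\lambda_{p_j}=1$ leaves only one candidate for a negative isotropy weight. Everything else is a direct consequence of Theorem \ref{canonical basis}, equivariant localization, and linearity of integration.
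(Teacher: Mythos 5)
Your argument is correct, and it is organized somewhat differently from the paper's proof, although the two share the same computational core. You establish the Kronecker duality $\int_{S^2_j}\tau_i=\delta_{ij}$ by applying Atiyah--Bott--Berline--Vergne localization on each isotropy sphere directly to the equivariant canonical classes $\tilde\tau_{p_i}$, using property (ii) to kill the contribution at $p_0$ (and at $p_j$ when $i\neq j$), property (i) together with the identification $e_T(T_{p_j}S^2_j)=\Lambda^-_{p_j}$ (valid since $\lambda_{p_j}=1$ and the tangent weight of the sphere at its $\psi^\xi$-maximum is $\xi$-negative) to get the value $1$ when $i=j$, and then conclude by linearity; in particular you never need an equivariant lift of $\alpha$. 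The paper instead invokes Kirwan surjectivity to choose an extension $\tilde\alpha\in H^2_T(M;\Z)$, writes $\tilde\alpha=\sum_i m_i\tilde\tau_i+P$ with $P$ a linear polynomial, extracts $m_j=\bigl(\tilde\alpha(p_j)-\tilde\alpha(p_0)\bigr)/\alpha_j$ from the fixed-point restrictions, and only then recognizes this difference quotient as the localization formula for $\int_{S^2_j}\tilde\alpha=\int_{S^2_j}\alpha$. Your route buys a slightly cleaner statement (the restricted canonical basis is dual to the sphere classes) and avoids Kirwan surjectivity and the bookkeeping of the ambiguity polynomial $P$; the paper's route works with an arbitrary class all at once and makes explicit how the coefficient appears as a difference of restrictions. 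Both hinge on the same two ingredients: localization over the spheres $S^2_j$ with fixed points $\{p_0,p_j\}$, and properties (i)--(ii) of the canonical basis.
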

\begin{proof}
By the Kirwan surjectivity theorem \cite{kirwan}, every cohomology class $\tau\in H^*(M;\Z)$ admits an equivariant extension $\tilde{\tau}\in H_T^*(M;\Z)$, namely
there exists $\tilde{\tau}$ such that $r(\tilde{\tau})=\tau$. This extension is not unique, but for degree 2 elements any two extensions differ by a degree one
polynomial in $\Z[x_1,\ldots,x_d]$ without constant term. Therefore, for the given $\alpha\in H^2(M;\Z)$ there exists $\tilde{\alpha}\in H^2_T(M;\Z)$ such that
\begin{equation}\label{alpha tilde}
\tilde{\alpha}=\sum_{i=1}^{b_2}m_i \tilde{\tau_i}+P,
\end{equation}
for some degree one polynomial $P\in \Z[x_1,\ldots,x_d]$ without constant term. By evaluating \eqref{alpha tilde} at the minimum $p_0$ of $\psi^\xi$
and using property (ii) in Theorem \ref{canonical basis} we obtain that $\tilde{\alpha}(p_0)=P$. Let $p_j$ be a point with $\lambda_{p_j}=1$.
Then, evaluating \eqref{alpha tilde} at $p_j$, using property (ii) in Theorem \ref{canonical basis} and the previous computation, we have
$$
\tilde{\alpha}(p_j)=m_j \tilde{\tau}_j(p_j)+\tilde{\alpha}(p_0)=m_j \alpha_j + \tilde{\alpha}(p_0)\,,
$$
where $\alpha_j$ is the unique negative weight at $p_j$. Therefore $m_j$, thought as a rational function in $x_1,\ldots,x_d$, is exactly given by
\begin{equation}\label{mj}
m_j=\frac{\tilde{\alpha}(p_j)-\tilde{\alpha}(p_0)}{\alpha_j}\,.
\end{equation}
Let $S^2_j$ be the unique invariant symplectic sphere stabilized pointwise by $\exp\{\xi\in \operatorname{Lie}(T)\mid \langle \alpha_j , \xi \rangle=0\}$ and containing
$p_0$ and $p_j$. Then the weight of the isotropy $T$-action on the tangent space $T_{p_0}S^2_{p_j}$ is exactly $-\alpha_j$. Therefore, by the Atiyah-Bott \cite{AB} and Berline-Vergne \cite{BV} localization
formula in equivariant cohomology, $m_j$ is exactly $\int_{S^2_j}\tilde{\alpha}$ which, by degree reasons, is equal to $\int_{S^2_j}\alpha$.
\end{proof}

\begin{corollary}\label{GKM monotone}
Let $(M,\omega)$ be a positive monotone compact symplectic manifold, therefore $c_1=[\omega]$, with pseudoindex $\rho_0$. Assume that
$(M,\omega)$ can be endowed with a Hamiltonian $T$-action which is also GKM. Suppose that there exists a $\xi\in \operatorname{Lie}(T)$
such that $\psi^\xi$ is index increasing and consider the canonical basis $\{\tau_1,\ldots,\tau_{b_2}\}\subset H^2(M;\Z)$ described above.
Let
$$
c_1=\sum_{i=1}^{b_2}m_i\tau_i
$$
for some integers $m_i\in \Z\,.$ Then $m_i>0$ for all $i=1,\ldots,b_2$.

Moreover, if $\widetilde{H}\colon \mathcal{L}\to \Z$ denotes the index map and
\begin{equation}\label{zeros Hil3}
\widetilde{H}(-k_1\tau_1- \cdots -k_{b_2}\tau_{b_2})=0,
\end{equation}
for all $k_1,\ldots,k_{b_2}\in \Z$ such that $0<k_i\leq m_i$ for all $i=1,\ldots,b_2$ and
$\sum_{i=1}^{b_2}k_i<\sum_{i=1}^{b_2}m_i,$ then
\begin{equation}\label{bound degree H3}
n\geq \sum_{i=1}^{b_2}(m_i-1)\geq b_2(\rho_0-1)\,,
\end{equation}
and therefore $(M,\omega)$ satisfies the generalized Mukai inequality.
\end{corollary}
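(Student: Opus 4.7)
The plan is to reduce the corollary to Theorem \ref{theorem hilbert polynomial} by taking $\tilde{\mathcal{L}}=\mathcal{L}$ and $\eta_i=\tau_i$ (so the full-rank sublattice condition is trivial), and then upgrade the resulting bound $n\geq \sum(m_i-1)$ to the generalized Mukai inequality by showing that each $m_i$ is at least $\rho_0$. Both the positivity statement $m_i>0$ and the final lower bound on $m_i$ hinge on identifying the coefficients $m_i$ geometrically via Proposition \ref{coefficients volumes}.

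First I would establish the positivity claim. By Proposition \ref{coefficients volumes}, after reordering the $\tau_i$'s, we have
\begin{equation*}
m_i \;=\; \int_{S^2_i} c_1 \;=\; \int_{S^2_i}[\omega],
\end{equation*}
where the second equality uses the positive monotonicity assumption $c_1=[\omega]$, and $S^2_i$ is the invariant symplectic sphere joining the unique minimum $p_0$ of $\psi^\xi$ to the fixed point $p_i$ with $\lambda_{p_i}=1$. Since $S^2_i$ is a genuine symplectic submanifold, $\int_{S^2_i}\omega$ is its symplectic area and is therefore strictly positive; hence $m_i>0$.

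Second, I would invoke Theorem \ref{theorem hilbert polynomial}. With the choices $\tilde{\mathcal{L}}=\mathcal{L}$ and $\eta_i=\tau_i$, all the hypotheses of that theorem are met: $\tilde{\mathcal{L}}$ is a full-rank sublattice of itself; the positivity $m_i>0$ just established gives the expansion $c_1=\sum m_i\eta_i$ with positive integer coefficients; and the vanishing assumption \eqref{zeros Hil3} is exactly \eqref{zeros Hil} in this setting. (The non-vanishing $\widetilde{H}(-c_1)\neq 0$, which Theorem \ref{theorem hilbert polynomial} requires, is inherited as a hypothesis.) The theorem then yields the first half of \eqref{bound degree H3}, namely $n\geq \sum_{i=1}^{b_2}(m_i-1)$.

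Third, I would obtain the second half of \eqref{bound degree H3} by using the geometric meaning of the $m_i$'s a second time: by Proposition \ref{coefficients volumes}, $m_i=c_1[S^2_i]$ where $S^2_i$ is an embedded symplectic sphere, so by the very definition of the pseudoindex,
\begin{equation*}
m_i \;=\; c_1[S^2_i] \;\geq\; \rho_0 \qquad\text{for each } i=1,\ldots,b_2.
\end{equation*}
Subtracting $1$ and summing over $i$ gives $\sum_{i=1}^{b_2}(m_i-1)\geq b_2(\rho_0-1)$, which combined with the previous step completes \eqref{bound degree H3} and hence the generalized Mukai inequality. The only delicate point I foresee is ensuring that Proposition \ref{coefficients volumes} applies to the Chern class $c_1$ itself (we must know $c_1\in H^2(M;\Z)$ rather than only in $H^2(M;\R)$), which for positive monotone manifolds is built into the setup via $c_1=[\omega]$ being integral; the rest of the argument is a clean packaging of Theorem \ref{theorem hilbert polynomial} with the GKM-canonical-basis dictionary.
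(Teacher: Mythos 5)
Your argument follows the paper's route almost step for step (Proposition \ref{coefficients volumes} plus monotonicity for $m_i>0$, Theorem \ref{theorem hilbert polynomial} with $\eta_i=\tau_i$ for the first inequality, and $m_i=c_1[S^2_i]\geq\rho_0$ for the second), but there is one genuine gap: you dismiss the hypothesis $\widetilde{H}(-c_1)\neq 0$ of Theorem \ref{theorem hilbert polynomial} by saying it is ``inherited as a hypothesis'' of the corollary. It is not. Unlike Corollary \ref{corollary hilbert polynomial}, the statement of Corollary \ref{GKM monotone} only assumes the vanishing condition \eqref{zeros Hil3}; the non-vanishing of $\widetilde{H}(-c_1)$ is a conclusion one must extract from the Hamiltonian/GKM structure, and without it Theorem \ref{theorem hilbert polynomial} simply cannot be invoked. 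This is precisely the point where the symmetry assumptions earn their keep.

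The paper closes this step by citing \cite[Proposition 41]{sabatini}: for a compact Hamiltonian $T$-space with discrete fixed point set one has $\widetilde{H}(-c_1)=(-1)^n N_0$, where $N_0$ is the number of fixed points with no negative isotropy weights; for a connected compact Hamiltonian $T$-space a generic moment map component has a unique minimum, so $N_0=1$ and $\widetilde{H}(-c_1)=(-1)^n\neq 0$. You need to supply this (or an equivalent) argument. A minor further remark: your worry about the integrality of $c_1$ is unnecessary, since $c_1\in H^2(M;\Z)$ is the first Chern class of the tangent bundle by definition and Proposition \ref{coefficients volumes} applies to any integral degree two class; the real issue is only the missing non-vanishing statement above.
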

\begin{proof}
Let $S^2_1,\ldots,S^2_{b_2}$ be the collection of symplectic spheres
connecting $p_0$, the minimum of $\psi^\xi$, to the fixed points $p_j$ with $\lambda_{p_j}=1$, for $j=1,\ldots,b_2$.
Then by Proposition \ref{coefficients volumes} and the monotonicity condition $c_1=[\omega]$ we have that
$$
m_j=\int_{S^2_j} c_1=\int_{S^2_j}\omega >0\,,
$$
thus proving the first claim.

Now we observe that for the class of spaces described in the hypotheses, $\widetilde{H}(-c_1)\neq 0$ (this holds indeed for all compact Hamiltonian $T$-spaces with discrete
fixed point set). Indeed, by \cite[Proposition 41]{sabatini}, $\widetilde{H}(-c_1)=(-1)^n N_0$, where $N_0$ is the number of fixed points with zero negative weights which,
for connected, compact Hamiltonian $T$-spaces is exactly 1. Therefore, the inequality
$$
n\geq \sum_{i=1}^{b_2}(m_i-1)
$$
in \eqref{bound degree H3} is a direct consequence of Theorem \ref{theorem hilbert polynomial}, where we can take $\eta_i=\tau_i$ for all $i=1,\ldots,b_2$.

The last inequality in \eqref{bound degree H3} follows from Proposition \ref{coefficients volumes}, because
$m_j=\int_{S^2_j} c_1\geq \rho_0$, where the last inequality follows directly by the definition of pseudoindex.

\end{proof}

In the next section we apply these results, in particular Corollary \ref{GKM monotone}, to generalized flag varieties. Indeed these are Hamiltonian $T$-spaces whose action
is GKM (see \cite{ghz}), they admit an index increasing component of the moment map (see \cite[Lemma 6.4]{st}) and a symplectic form with respect to which they are positive monotone (see \cite[Proposition 5.24]{gvhs}). In order to obtain the inequalities in \eqref{bound degree H3} it is then sufficient to find the ``pointed box'' of zeros of the generalized Hilbert polynomial, which is the content of Theorem \ref{thm strong}.

\section{Mukai Conjecture for coadjoint orbits and the Kostant game}\label{mukai and kostant}

\subsection{Set up and reformulation of our main claim} Let $G$ be a
compact Lie group. We denote by $G_{\mathbb{C}}$ the
complexification of $G\,.$ Let $P\subset G_{\mathbb{C}}$ be a
parabolic subgroup of $G_{\mathbb{C}}\,.$

Let $\mathfrak{g}$ be the Lie algebra of $G$ and $\mathfrak{g}^*$ be
the dual of $\mathfrak{g}\,.$ We denote by $(\cdot\,,\,\cdot)$ an
Ad-invariant inner product defined on $\mathfrak{g}$ and identify
the Lie algebra $\mathfrak{g}$ and its dual $\mathfrak{g}^*$ via
this inner product.

Let $T\subset G$ be a maximal torus and let $B\subset
G_{\mathbb{C}}$ be a Borel subgroup with $T_{\mathbb{C}}\subset B
\subset P,$ where $T_{\mathbb{C}}$ denotes the complexification of
$T\,.$ Let $R\subset \mathfrak{t}^*$ denote the set of roots and
$R^+$ be the system of positive roots compatible with the choice of
the Borel subgroup $B\subset G_{\mathbb{C}}$ with simple roots
$S\subset R^+\,.$ Let $W:=N_G(T)/T$ be the Weyl group of $G.$ For
every root $\alpha\in R,$ let $s_\alpha\in W$ be the reflection
associated to it. For the parabolic subgroup $P\subset
G_{\mathbb{C}},$ let $W_P:=N_P(T)/T$ be the Weyl group of $P,$
$S_P\subset S$ be the subset of simple roots whose corresponding
reflections are in $W_P$ and $R_P^+$ be the set of positive roots
generated by the simple roots $S_P\,.$

We say that a simple root $\alpha$ is \textit{adjacent} to $P$ if
$\alpha\in S\setminus S_P$ and if in the Dynkin diagram of $G,$ the
simple root $\alpha$ is adjacent to the Dynkin diagram of $S_P\,.$

The set of fundamental weights of $G$ will be denoted by $\{
\varpi_\alpha \mid \alpha \in S\}\,.$ Recall that they are defined
as the dual basis to the basis of coroots
$\check{S}:=\{\alpha^{\vee}= \frac{2 \alpha}{(\alpha, \alpha)}\mid \
\alpha \in S\}$. Just as before, we define $
\check{S}_P:=\{\check{\alpha}\in \check{S} \mid \alpha\in S_P\}$ and
let $\check{R}_P^+$ be the set of positive coroots generated by the
simple coroots $\check{S}_P\,.$

If $\varpi\in \Z\{\varpi_\alpha\mid \alpha\in S\}$ is a weight that
vanishes on all $\beta$ in $S_P,$ it determines a character on $P,$
and so a line bundle $\ll_\varpi=G_{\mathbb{C}}\times^P \C(\varpi)$
on $G_{\mathbb{C}}/P\,.$ We identify the Chern class
$c_1(\ll_\varpi)\in H^2(G_{\mathbb{C}}/P, \Z)$ with the weight
$\varpi$ and we obtain an isomorphism
\begin{align*}
\Z\{\lambda_\alpha\mid \alpha\in \ssp \} &\to H^2(G_{\mathbb{C}}/P,
\Z) \numberthis
\label{secondcohomology} \\
\varpi &\mapsto c_1(\ll_\varpi)\,
\end{align*}
(see for instance \cite{serre}).

We denote by $\hp$ the index map of $G_{\mathbb{C}}/P\,.$ More
precisely, for $\varpi \in \Z\{\varpi_\alpha\mid \alpha\in \ssp
\}\cong \hh^2 (\gp;\Z)$, the value of $\hp(\varpi)$ is the index
$\operatorname{Ind}(\ll_\varpi)$ of the line bundle $\ll_\varpi\,.$
If we write $\varpi=\displaystyle \sum_{\alpha \in \ssp } k_\alpha
\varpi_\alpha,$ Lemma \ref{lemma hilbert polynomial} allows us to
view $\hp(\varpi)$ as a complex polynomial of degree at most the
complex dimension of $G_{\mathbb{C}}/P$ in $ k_\alpha $'s over
$\alpha \in \ssp$. We will call this polynomial the \textit{Hilbert
polynomial of $G_{\mathbb{C}}/P$\,.} The Bott-Borel-Weil Theorem
gives us an explicit formula for the Hilbert polynomial $\hp.$ We
provide this formula and a short explanation of from where it
follows in the next statement.

\begin{theorem}\label{teor formula hp}
For $\varpi=\displaystyle \sum_{\alpha \in \ssp } k_\alpha
\varpi_\alpha \in\Z\{\varpi_\alpha\mid \alpha\in \ssp \} \cong \hh^2
(\gp;\Z),$
$$\
\hp(\varpi)=\operatorname{Ind}(\ll_\varpi)= \prod_{\alpha \in
R_+\setminus R^+_P}\frac{\langle \varpi+\rho,
\alpha^{\vee}\rangle}{\langle\rho, \alpha^{\vee}\rangle},
$$
where $\rho= \displaystyle\frac{1}{2} \sum_{\alpha \in R_+}
\alpha=\sum_{\alpha \in S}\varpi_\alpha$ is half of the sum of
positive roots which is also equal to the sum of all fundamental
weights.
\end{theorem}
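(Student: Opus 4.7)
The plan is to identify $\operatorname{Ind}(\ll_\varpi)$ with the dimension of an irreducible $G$-representation via the Borel--Weil--Bott theorem, apply the classical Weyl dimension formula, and then upgrade an equality of integer values to an equality of polynomials.

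First I would use the fact that, by the holomorphic case of Atiyah--Singer (equivalently Hirzebruch--Riemann--Roch), $\operatorname{Ind}(\ll_\varpi)$ equals the holomorphic Euler characteristic $\chi(\gp,\ll_\varpi)=\sum_{i\geq 0}(-1)^i \dim H^i(\gp,\ll_\varpi)$. For a $G$-dominant integral weight $\varpi$, i.e.\ one with $k_\alpha\geq 0$ for every $\alpha\in \ssp$, the Borel--Weil--Bott theorem gives $H^0(\gp,\ll_\varpi)=V_\varpi$, the irreducible $G$-module of highest weight $\varpi$, and $H^i(\gp,\ll_\varpi)=0$ for $i>0$. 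Hence for such $\varpi$ one has $\hp(\varpi)=\dim V_\varpi$.

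Next I would invoke the Weyl dimension formula, which reads $\dim V_\varpi=\prod_{\alpha\in R^+}\langle \varpi+\rho,\alpha^\vee\rangle/\langle\rho,\alpha^\vee\rangle$. The crucial reduction is that every factor indexed by $\alpha\in R^+_P$ collapses to $1$: since $\varpi$ is a $\Z$-combination of the fundamental weights $\varpi_\alpha$ with $\alpha\in \ssp$, we have $\langle\varpi,\beta^\vee\rangle=0$ for every simple $\beta\in S_P$, and this extends to $\langle\varpi,\alpha^\vee\rangle=0$ for every $\alpha\in R_P^+$ by expressing $\alpha^\vee$ inside the Levi root subsystem generated by $\check{S}_P$; on the other hand $\langle\rho,\alpha^\vee\rangle\neq 0$ since $\rho$ lies in the open dominant chamber. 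Thus each factor with $\alpha\in R^+_P$ equals $\langle\rho,\alpha^\vee\rangle/\langle\rho,\alpha^\vee\rangle=1$ and the Weyl product collapses to the product over $R^+\setminus R^+_P$ written in the statement.

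Finally, both sides of the claimed identity are polynomials in the variables $k_\alpha$, $\alpha\in \ssp$: the left-hand side by Lemma \ref{lemma hilbert polynomial}, and the right-hand side manifestly, as a constant multiple of a product of linear forms in the $k_\alpha$'s. They agree on the set $\Z_{\geq 0}^{\ssp}$ of $G$-dominant integral weights, which is Zariski-dense in $\C^{\ssp}$, so they must coincide as polynomials on all of $\C^{\ssp}$. The main bookkeeping obstacle I anticipate is precisely the point that every $\alpha^\vee$ with $\alpha\in R^+_P$ pairs trivially with $\varpi$, since the map $\alpha\mapsto\alpha^\vee$ is not $\Z$-linear in general; this is handled by restricting to the Levi subsystem, where each positive coroot is a nonnegative integer combination of the simple coroots in $\check{S}_P$. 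Everything else is a direct translation between Atiyah--Singer, Borel--Weil--Bott, and the Weyl formula.
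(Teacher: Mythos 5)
Your proposal is correct and follows essentially the same route as the paper: Hirzebruch--Riemann--Roch to identify the index with the holomorphic Euler characteristic, Borel--Weil--Bott to reduce to $\dim H^0$ for dominant weights, the Weyl dimension formula with the factors over $R_P^+$ collapsing because $\langle\varpi,\alpha^\vee\rangle=0$ there, and then the (implicit in the paper, explicit in your write-up) extension from dominant weights to all of $\Z\{\varpi_\alpha\mid\alpha\in S\smallsetminus S_P\}$ by polynomiality. Your extra care about $\alpha\mapsto\alpha^\vee$ not being linear, resolved via the Levi subsystem, is a correct and welcome clarification of a point the paper leaves tacit.
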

\begin{proof}
It follows from the Hirzebruch-Riemann-Roch Thereom that the index
of the bundle $\ll_\varpi$ is the Euler characteristic of
$\ll_\varpi$ in sheaf cohomology, namely the alternating sum
$$
\I (\ll_{\varpi})=\sum_j (-1)^j \dim \hh^j (\gp; \ll_{\varpi})\,.
$$
The Bott-Borel-Weil Theorem implies that for $\varpi$ a dominant
weight, i.e. $ \varpi=\displaystyle \sum_{\alpha \in \ssp} k_\alpha
\varpi_\alpha$ with $k_\alpha\in \Z_{>0},$ higher cohomology
vanishes and thus
$$
\hp(\varpi)=\I (\ll_{\varpi})=\dim \hh^0 (\gp; \ll_{\varpi})
$$
holds for a dominant weight (see for instance \cite{bott},
\cite{kostant}). The Borel-Weil Theorem states that for $\varpi$ a
dominant weight, the action of $G$ on $\hh^0 (\ob; \ll_{\varpi})$ is
the irreducible representation of $G_\C$ with highest weight
$\varpi$ (see e.g. \cite{serre}). The dimension of $\hh^0 (\gp;
\ll_{\varpi})$ for a dominant weight $\varpi$ follows from the Weyl
character formula and equals
$$\prod_{\alpha \in
R_+}\frac{\langle \varpi+\rho, \alpha^{\vee}\rangle}{\langle \rho,
\alpha^{\vee}\rangle}=\prod_{\alpha \in R_+\setminus
R^+_P}\frac{\langle \varpi+\rho, \alpha^{\vee}\rangle}{\langle\rho,
\alpha^{\vee}\rangle}
$$
where $\rho$ is half of the sum of positive roots and we are done.
\end{proof}
The first Chern class $c_1(T (G_\C /P))$ corresponds to
$$
\sum_{\alpha \in R^+ \smallsetminus R_P^+} \alpha=\sum_{\alpha \in
R^+} \alpha-\sum_{\alpha \in  R_P^+} \alpha=\sum_{\alpha \in
S}2\varpi_\alpha -\sum_{\alpha \in R_P^+} \alpha
$$
via the identification in (\ref{secondcohomology}) and can be
written as a linear combination of fundamental weights as
$$
\sum_{\beta \in S} n_\beta \varpi_\beta,
$$
where
$$
n_\beta:=\sum_{\alpha \in R\smallsetminus R_P^+} \langle\alpha,
\beta^{\vee}\rangle=2-\Big\langle\sum_{\alpha \in R_P^+} \alpha,
\beta^{\vee}\Big\rangle\,.
$$

Note for instance that in type A, $n_\beta=0$ if $\beta\in S_P$ and
$n_\beta=2+l$ if $\beta\in S\setminus S_P,$ where $l$ denotes the
sum of the sizes of the connected components of the Dynkin diagram
of $S_P$ adjacent to $\beta\,.$ In particular, if $\beta\in
S\setminus S_P$ is not adjacent to $P,$ then $n_\beta=2\,.$ We
generalise some of these remarks for any type in the following
lemma.

\begin{lemma} If $\beta\in S_P,$ then $n_\beta=0\,.$
If $\beta\in S\setminus S_P$ is a simple root not adjacent to $P,$
then $n_\beta=2\,.$
\end{lemma}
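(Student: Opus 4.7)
The plan is to unpack the definition of $n_\beta$ and use the standard fact that $\sum_{\alpha \in R_P^+} \alpha$ equals twice the Weyl vector $\rho_P$ of the Levi subsystem generated by $S_P$, so that everything reduces to evaluating $\rho_P$ on the coroot $\beta^\vee$.

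For the first assertion, suppose $\beta \in S_P$. The set $R_P^+$ is the positive root system of a root subsystem whose simple roots are $S_P$ and whose simple coroots are $\check{S}_P$. By the classical identity $\sum_{\alpha \in R_P^+}\alpha = 2\rho_P$, and since $\rho_P$ is characterised (within the span of $S_P$) by $\langle \rho_P, \gamma^\vee\rangle = 1$ for every $\gamma \in S_P$, I would conclude that $\langle \sum_{\alpha \in R_P^+}\alpha, \beta^\vee\rangle = 2$. Substituting into $n_\beta = 2 - \langle \sum_{\alpha \in R_P^+}\alpha, \beta^\vee\rangle$ yields $n_\beta = 0$.

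For the second assertion, suppose $\beta \in S\setminus S_P$ is not adjacent to $P$ in the Dynkin diagram. Non-adjacency means that $\langle \gamma, \beta^\vee\rangle = 0$ for every $\gamma \in S_P$, since edges of the Dynkin diagram precisely record non-vanishing Cartan integers between distinct simple roots. Each positive root $\alpha \in R_P^+$ is a $\Z_{\geq 0}$-linear combination of elements of $S_P$, so by linearity of $\langle \cdot, \beta^\vee\rangle$ in the first argument, $\langle \alpha, \beta^\vee\rangle = 0$ for every $\alpha \in R_P^+$. Hence $\langle \sum_{\alpha \in R_P^+}\alpha, \beta^\vee\rangle = 0$, and the formula gives $n_\beta = 2$.

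Neither step poses a serious obstacle; the main subtlety to keep in mind is simply to cite the correct characterisation of $\rho_P$ (namely that it is the Weyl vector of the \emph{sub}system generated by $S_P$, not of the ambient root system $R$), so that the evaluation $\langle \rho_P, \beta^\vee\rangle = 1$ for $\beta \in S_P$ really applies. Apart from that, the argument is a direct computation from the definition of $n_\beta$ together with two well-known facts from root-system combinatorics.
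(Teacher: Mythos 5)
Your proof is correct and follows essentially the same route as the paper: both parts reduce to computing $\bigl\langle\sum_{\alpha\in R_P^+}\alpha,\beta^\vee\bigr\rangle$, using that this sum is twice the Weyl vector of the subsystem generated by $S_P$ (the paper phrases this via the sum of fundamental weights of the connected component of $\Gamma_P$ containing $\beta$, an inessential refinement since $\langle\rho_P,\beta^\vee\rangle=1$ holds for any, possibly reducible, subsystem), and the non-adjacency case is handled identically by linearity.
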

\begin{proof}
If $\beta \in S_P,$ then $\Big\langle\sum_{\alpha \in R_P^+} \alpha,
\beta^{\vee}\Big\rangle=2$ and $n_\beta=0$. Indeed, if we look at
the Dynkin diagram for $G$, and let $\Gamma_P$ be the subgraph
corresponding to $P,$ i.e. the subgraph containing all the simple
roots that are in $P$ and all the edges between them. We call by
$\Gamma_{P_\beta}$ the connected component of $\Gamma_P$ containing
$\beta$ and by $P_\beta$ the parabolic subgroup associated to the
simple roots with Dynkin diagram $\Gamma_{P_\beta}\,.$ The connected
component $\Gamma_P^\beta$ is either a point or a Dynkin diagram of
some classical group. Only the roots $\alpha$ generated by the
simple roots contained in $\Gamma_{P_\beta}$ contribute to
$\langle\sum_{\alpha \in R_P^+} \alpha, \beta^{\vee}\rangle$. For
other roots $\alpha \in R_P^+$ we have that $\langle \alpha,
\beta^{\vee}\rangle=0$. Therefore, for such $\beta$, one has that
$$ \Big\langle\sum_{\alpha \in R_P^+} \alpha,
\beta^{\vee}\Big\rangle=\Big\langle\sum_{\alpha \in R_{P_\beta}^+}
\alpha, \beta^{\vee}\Big\rangle=2\Big\langle\sum_{\alpha \in
S_{P_\beta}} \varpi_\alpha, \beta^{\vee}\Big\rangle=2\,.
$$
If $\beta \notin S_P$ and $\beta$ is not adjacent to any simple root
in $\Gamma_P$, then $\langle\alpha, \beta^{\vee}\rangle=0$ for all
$\alpha \in  R_P^+$ and $n_\beta=2$.
\end{proof}

\begin{theorem}\label{thm strong}
Let $\{\varpi_\alpha\mid \alpha\in S\}$ be the set of fundamental
weights. If we write
$$
c_1(T (G_\C /P))=\sum_{\alpha\in R^+\smallsetminus
R^+_P}\alpha=\sum_{\beta \in S\smallsetminus S_P} n_\beta
\varpi_\beta
$$
where $n_\beta$ is the positive integer $\sum_{\alpha \in
R\smallsetminus R_P^+} \langle\alpha, \beta^{\vee}\rangle,$ then for
each
$$
\varpi=-\sum_{\beta \in S\setminus S_P} \tn_\beta \varpi_\beta
$$
with
$$
0 < \tnb \leq n_\beta\,\,\, \text{but}\,\,\, \sum_{\beta \in
S\setminus S_P} \tnb < \sum_{\beta \in S\setminus S_P} n_\beta,
$$
we have that
$$
\hp(\varpi)=\operatorname{Ind}(\ll_\varpi)=0\,.
$$
\end{theorem}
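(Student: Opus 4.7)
The plan is to apply the Bott-Borel-Weil formula of Theorem~\ref{teor formula hp}, which expresses
$$\hp(\varpi)=\prod_{\alpha\in R^+\setminus R_P^+}\frac{\langle\varpi+\rho,\alpha^\vee\rangle}{\langle\rho,\alpha^\vee\rangle}.$$
Since $\rho$ is strictly dominant each denominator is nonzero, so it suffices to produce a single root $\alpha\in R^+\setminus R_P^+$ such that $\langle\varpi+\rho,\alpha^\vee\rangle=0$. Using that $\rho=\sum_{\beta\in S}\varpi_\beta$ together with the hypothesis on $\varpi$, we get
$$\varpi+\rho=\sum_{\beta\in S_P}\varpi_\beta+\sum_{\beta\in S\setminus S_P}(1-\tnb)\varpi_\beta,$$
so the Dynkin labels of $\varpi+\rho$ are $+1$ on $S_P$ and $\leq 0$ on $S\setminus S_P$. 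The statement thus reduces to a purely combinatorial problem on the root system of $G$.

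There is an immediate easy case: if some $\tnb=1$ with $\beta\in S\setminus S_P$, take $\alpha=\beta$. Since $\langle\varpi_\gamma,\beta^\vee\rangle=\delta_{\gamma,\beta}$ for simple $\gamma$, one obtains $\langle\varpi+\rho,\beta^\vee\rangle=1-\tnb=0$. The real content of the theorem lies in the hard case, when $\tnb\geq 2$ for every $\beta\in S\setminus S_P$ while still $\sum\tnb<\sum n_\beta$. Then the witnessing root $\alpha$ must be non-simple, with an expansion in simple roots carefully tuned so that its positive contributions coming from the $+1$ Dynkin labels on $S_P$ cancel against its negative contributions coming from the labels $1-\tnb\leq -1$ on $S\setminus S_P$.

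I would pursue this by an induction that walks down from the ``corner'' $(n_\beta)$ to any box point $(\tnb)$ using the moves of the modified Kostant game of \S\ref{sc:modified kostant game}. One starts at $(n_\beta)$ and performs successive moves, each strictly decreasing the height $\sum\tnb$ and labelled by a positive root of $R^+\setminus R_P^+$. The central claim is that if a move takes a configuration to $\varpi$, then its labelling root $\alpha$ satisfies $\langle\varpi+\rho,\alpha^\vee\rangle=0$, and that every point of the pointed box is reached by such a sequence. The main obstacle is precisely this combinatorial bookkeeping: one must show that the game's moves are in bijection with reduced expressions of minimal-length representatives in $W/W_P$, and that the positive root encoded by a move is exactly the one producing a zero factor in Theorem~\ref{teor formula hp} at the resulting configuration. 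With those properties of the modified Kostant game in hand, the theorem follows by reading off the vanishing linear factor in the Bott-Borel-Weil product.
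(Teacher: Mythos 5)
Your reduction via Theorem \ref{teor formula hp} is the right starting point, and your easy case ($\tnb=1$, take $\alpha=\beta$) is correct, but the whole content of the theorem sits in what you defer as the ``central claim,'' and as formulated that claim is both unproven and structurally misaligned with how the modified Kostant game actually enters. The game of Subsection \ref{sc:modified kostant game} is not played on the box of exponents $(\tnb)_{\beta\in S\setminus S_P}$: it is played on the coroot Dynkin diagram of $\Gamma_P$ with one added vertex, its moves \emph{increase} height, and what it produces, for each $\beta$ adjacent to $P$, is a chain of positive coroots of the form $\check{\beta}+\gamma$ with $\gamma$ a nonnegative integer combination of coroots of $S_P$. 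Your picture --- a height-decreasing walk inside the pointed box that must visit \emph{every} lattice point, with each move labelled by a root of $R^+\setminus R_P^+$ producing a zero factor at the configuration reached --- is a much stronger covering statement than is needed, is not what the game provides, and you offer no argument for it.

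The missing idea is that the zeros can be produced one coordinate at a time, as \emph{linear factors} of $\hp$. If $\alpha^{\vee}=\check{\beta}+\gamma$ with $\gamma\in\Z_{\geq 0}\check{S}_P$, then $\langle\varpi,\gamma\rangle=0$ and $\langle\rho,\alpha^{\vee}\rangle=\operatorname{ht}(\alpha^{\vee})$, so $\langle\varpi+\rho,\alpha^{\vee}\rangle=k_\beta+\operatorname{ht}(\check{\beta}+\gamma)$ depends only on the single coordinate $k_\beta$. Hence, once one exhibits such a coroot of every height $j=1,\ldots,n_\beta-1$ (a \emph{maximal good string} of coroots), each $k_\beta+j$ divides $\hp$, and every point of the pointed box is covered because some coordinate satisfies $\tnb\leq n_\beta-1$; this is exactly Theorem \ref{roots of hil} and Proposition \ref{coroots from good strings}. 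Producing coroots of every intermediate height is the real work you have skipped: maximality of the string requires Theorem \ref{Main theorem2} (the terminating height equals $1-\langle\sum_{\alpha\in R_P^+}\alpha,\check{\beta}\rangle=n_\beta-1$), jumps in the string must be filled using the root-string Lemma \ref{lemma9.4H}, strings must be glued when $\beta$ is adjacent to several components of $\Gamma_P$, and only then does the correspondence with reduced words of minimal-length representatives serve to prove Theorems \ref{Main theorem1} and \ref{Main theorem2}. Without this per-coordinate hyperplane structure and the existence of the good maximal strings, your hard case remains unproved.
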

Note that the last Theorem together with Theorem \ref{theorem
hilbert polynomial} imply the Mukai conjecture for coadjoint orbits
and indeed an stronger inequality.
\begin{corollary}\label{inequality pasquier}
The following inequality
\begin{equation}\label{strong inequality}
\sum_{\beta\in S\smallsetminus S_P }\Big(\sum_{\alpha \in
R\smallsetminus R_P^+} n_\beta-1\Big)=\sum_{\beta\in S\smallsetminus
S_P }\Big(\sum_{\alpha \in R\smallsetminus R_P^+} \langle\alpha,
\beta^{\vee}\rangle -1\Big)\leq \sharp(R^+\smallsetminus R^+_P)
\end{equation}
holds for a parabolic subgroup $P$ of $G_{\mathbb{C}}$ and it
implies the Mukai inequality for $G_{\mathbb{C}}/P$
$$
\sharp(S\smallsetminus S_P)\cdot \big(\gcd_{\beta\in S\smallsetminus
S_P}(n_\beta)-1\big)\leq \sharp(R^+\smallsetminus R^+_P)
$$
as the complex dimension, the second Betti number and the index of
$G_{\mathbb{C}}/P$ equal \\ $\sharp(R^+\smallsetminus R^+_P),
\sharp(S\smallsetminus S_P)$ and $\displaystyle\gcd_{\beta\in
S\smallsetminus S_P}{n_\beta}\,,$ respectively.
\end{corollary}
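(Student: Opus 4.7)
The plan is to combine the vanishing result in Theorem \ref{thm strong} with the polynomial-degree lower bound in Theorem \ref{theorem hilbert polynomial} to obtain the strong inequality \eqref{strong inequality}, and then to deduce the Mukai inequality from it by an elementary $\gcd$ estimate.

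First I would identify the invariants of $X := G_{\mathbb{C}}/P$ needed to apply Theorem \ref{theorem hilbert polynomial}. The complex dimension equals $n = \sharp(R^+\smallsetminus R^+_P)$, the second Betti number equals $b_2 = \sharp(S\smallsetminus S_P)$, and under the isomorphism \eqref{secondcohomology} the classes $\{\varpi_\beta\}_{\beta\in S\smallsetminus S_P}$ form a $\mathbb{Z}$-basis of $H^2(X;\mathbb{Z})$. By the lemma preceding Theorem \ref{thm strong}, $c_1(TX) = \sum_{\beta\in S\smallsetminus S_P} n_\beta \varpi_\beta$ with every $n_\beta > 0$, so this basis plays the role of $\{\eta_i\}$ in Theorem \ref{theorem hilbert polynomial} with $m_i = n_\beta$.

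Next I would verify the two hypotheses of Theorem \ref{theorem hilbert polynomial}. The pointed-box-of-zeros condition \eqref{zeros Hil} is exactly the content of Theorem \ref{thm strong}. For the non-vanishing $\widetilde{H}(-c_1)\neq 0$ I would invoke Corollary \ref{GKM monotone}: $G_{\mathbb{C}}/P$ is a positive monotone Hamiltonian $T$-space whose $T$-action is GKM and admits an index-increasing component of the moment map, so by the cited \cite[Proposition 41]{sabatini} one has $\widetilde{H}(-c_1) = (-1)^n N_0 = (-1)^n \neq 0$, where $N_0 = 1$ is the number of fixed points with zero negative weights. Applying Theorem \ref{theorem hilbert polynomial} then yields
\[
\sharp(R^+\smallsetminus R^+_P) \;=\; n \;\geq\; \deg \hp \;\geq\; \sum_{\beta\in S\smallsetminus S_P}(n_\beta - 1),
\]
which, once $n_\beta$ is rewritten as $\sum_{\alpha \in R\smallsetminus R^+_P}\langle \alpha, \beta^\vee\rangle$, is the strong inequality \eqref{strong inequality}.

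For the Mukai inequality, let $k_0 := \gcd_{\beta\in S\smallsetminus S_P}(n_\beta)$. Since $\{\varpi_\beta\}_{\beta\in S\smallsetminus S_P}$ is a $\mathbb{Z}$-basis of $\mathcal{L} \cong H^2(X;\mathbb{Z})$, this $k_0$ is by definition the index of $G_{\mathbb{C}}/P$. Because $k_0$ divides each $n_\beta$ we have $k_0 - 1 \leq n_\beta - 1$ for every $\beta\in S\smallsetminus S_P$; summing over $\beta$ and chaining with the strong inequality gives
\[
\sharp(S\smallsetminus S_P)\cdot(k_0 - 1) \;\leq\; \sum_{\beta\in S\smallsetminus S_P}(n_\beta - 1) \;\leq\; \sharp(R^+\smallsetminus R^+_P),
\]
which is the claimed Mukai inequality. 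There is no serious obstacle: the corollary simply packages Theorems \ref{theorem hilbert polynomial} and \ref{thm strong} for flag varieties, with the only minor subtlety being the non-vanishing of $\widetilde{H}(-c_1)$, handled by the GKM machinery already set up in Section \ref{sec:zeros}.
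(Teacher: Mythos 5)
Your proposal is correct and follows essentially the same route the paper intends: the corollary is obtained by feeding the pointed box of zeros from Theorem \ref{thm strong} into Theorem \ref{theorem hilbert polynomial} (with $\eta_\beta=\varpi_\beta$, $m_\beta=n_\beta$, and the non-vanishing of $\widetilde{H}(-c_1)$ supplied by the GKM setup of Corollary \ref{GKM monotone}), and the Mukai inequality then follows from $\gcd_\beta(n_\beta)-1\leq n_\beta-1$ exactly as you argue.
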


\begin{remark}
The inequality in \ref{strong inequality} was already proved by B.
Pasquier in \cite[Lemma 4.8]{pasquier}. In Pasquier's proof the
inequality is proven by first reducing its verification to a list of
cases that depend on the type of the Dynkin diagram of the parabolic
subgroup and the position of the simple roots adjacent to the
subgroup and then by checking directly the resulting inequalities
through explicit calculations. The proof that we present in this
paper of the inequality will rely on the combinatorics of the
\textit{Kostant game} and we hope that it will contribute to its
understanding.
\end{remark}

If we view the Hilbert polynomial
$\hp(\varpi)=\hp\big(\displaystyle\sum_{\beta\in S\smallsetminus
S_P} k_\beta \varpi_\beta\big)$ as a polynomial in variables
$k_\beta$ over $\beta\in S\smallsetminus S_P$, Theorem \ref{thm
strong} will follow from the following.

\begin{theorem}\label{roots of hil}
For each $\beta \in S\smallsetminus S_P$ and each $j=1, \ldots,
n_\beta-1$ the Hilbert polynomial
$\hp\Big(\displaystyle\sum_{\beta\in S\smallsetminus S_P} k_\beta
\varpi_\beta\Big)$ vanishes at $k_\beta=-j$.
\end{theorem}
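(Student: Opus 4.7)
The plan is to exploit the explicit Bott-Borel-Weil product formula
$$\mathrm{H}_P(\varpi) = \prod_{\alpha \in R^+ \setminus R^+_P} \frac{\langle \varpi + \rho, \alpha^\vee \rangle}{\langle \rho, \alpha^\vee \rangle}$$
from Theorem \ref{teor formula hp}. Since the denominators are nonzero constants, it suffices, for each pair $(\beta, j)$ with $\beta \in S \setminus S_P$ and $1 \leq j \leq n_\beta - 1$, to produce a single root $\alpha \in R^+ \setminus R^+_P$ whose linear factor $\langle \varpi + \rho, \alpha^\vee \rangle$ becomes identically zero (as a polynomial in the remaining variables $\{k_{\beta'}\}_{\beta' \neq \beta}$) after the substitution $k_\beta = -j$.

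First I would expand the generic linear factor. Writing $\alpha^\vee = \sum_{\delta \in S} d_\delta \, \delta^\vee$ in the simple coroot basis, one has $d_\delta = \langle \varpi_\delta, \alpha^\vee \rangle$ and $\langle \rho, \alpha^\vee \rangle = \sum_\delta d_\delta$, so
$$\langle \varpi + \rho, \alpha^\vee \rangle = \sum_{\beta' \in S \setminus S_P} k_{\beta'}\, d_{\beta'} + \sum_{\delta \in S} d_\delta\,.$$
Substituting $k_\beta = -j$ and demanding identical vanishing as a polynomial in the other $k_{\beta'}$'s forces two conditions: (i) $d_{\beta'} = 0$ for every $\beta' \in (S \setminus S_P) \setminus \{\beta\}$, and (ii) $(j-1)\,d_\beta = \sum_{\gamma \in S_P} d_\gamma$. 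Since $\alpha \notin R^+_P$ requires $d_\beta > 0$, condition (i) confines the support of $\alpha$ to $S_P \cup \{\beta\}$, while (ii) fixes the ratio $\sum_\delta d_\delta / d_\beta$ to be exactly $j$.

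Thus Theorem \ref{roots of hil} reduces to the purely combinatorial assertion that inside the root subsystem $R' \subseteq R$ generated by $S_P \cup \{\beta\}$, every integer $j \in \{1, \ldots, n_\beta - 1\}$ is realised as $\sum_{\delta \in S} d_\delta / d_\beta$ for some positive root $\alpha \in R'^+$ with $d_\beta \geq 1$. The base case $j = 1$ is handled by $\alpha = \beta$ itself, and the constraint $j \leq n_\beta - 1$ is consistent with the identity $n_\beta = 2 - \langle 2\rho_P, \beta^\vee \rangle$, which measures how many simple coroots of $S_P$ can be successively ``added'' to $\beta^\vee$ within $R'^\vee$.

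The main obstacle is the uniform construction of such roots across all Dynkin types, and this is precisely where the machinery developed earlier in the paper enters. I would run the modified Kostant game of Subsection \ref{sc:modified kostant game} on the subdiagram of $S_P \cup \{\beta\}$, starting from $\beta$; its moves are in bijection with reduced expressions of the minimal-length representatives of cosets in the relevant Weyl-group quotient, and therefore enumerate exactly the positive roots of $R'^+ \setminus R^+_P$ together with their coordinates $d_\delta$. The remaining task is the combinatorial check that the sequence of ratios $\sum d_\delta / d_\beta$ produced by the game covers the entire set $\{1, 2, \ldots, n_\beta - 1\}$. This replaces Pasquier's case-by-case verification with a single uniform statement about the Kostant game, which is where the structural content of the argument lies.
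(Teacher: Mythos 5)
Your opening reduction is correct and coincides with the paper's: by Theorem \ref{teor formula hp} it suffices to exhibit, for each pair $(\beta,j)$, one root $\alpha\in R^+\smallsetminus R^+_P$ whose coroot is supported on $\check{S}_P\cup\{\check{\beta}\}$ and satisfies $\operatorname{ht}(\alpha^\vee)=j\,d_\beta$, where $d_\beta$ is the coefficient of $\check{\beta}$; in the case $d_\beta=1$ this is precisely the paper's notion of a good string of coroots, and your computation of the linear factor is exactly Proposition \ref{coroots from good strings}. (Allowing $d_\beta\geq 1$ is a harmless mild generalization, but it is not what saves any work.)

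The genuine gap is that your argument stops where the actual proof begins: the assertion that such roots exist for \emph{every} $j\in\{1,\ldots,n_\beta-1\}$ is stated as ``the remaining combinatorial check'' and never carried out, yet this is the entire content of the theorem beyond the trivial cases $j=1$ (take $\alpha=\beta$) and $\beta$ not adjacent to $P$ (where $n_\beta=2$). Concretely, what is missing is: (i) that the modified Kostant game on the coroot diagram terminates and does so in a unique configuration (Theorem \ref{Main theorem1}), which in the paper requires the bijection between plays of the game and reduced expressions of elements of $W^{j}$ and the uniqueness of the longest element there; (ii) that the terminating height equals $n_\beta$, i.e.\ the string produced is \emph{maximal} --- this is Theorem \ref{Main theorem2} combined with the identity $n_\beta=2-\langle\sum_{\alpha\in R^+_P}\alpha,\check{\beta}\rangle$, and without it you have no control on how far the ratios reach; (iii) that all intermediate values are actually attained: the game may jump over heights, and the paper fills these gaps using the unbroken root-string Lemma \ref{lemma9.4H} (Theorem \ref{maximal good strings}); and (iv) the case in which $\beta$ is adjacent to several connected components of $\Gamma_P$, where the strings coming from the different components must be concatenated and maximality re-verified, as in the paper's proof of Theorem \ref{roots of hil}. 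Until points (i)--(iv) are supplied, the proposal proves vanishing only at $k_\beta=-1$, not at $k_\beta=-j$ for $2\leq j\leq n_\beta-1$, so it is a correct reduction plus a plan rather than a proof.
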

This is the theorem we are going to prove. As part of this theorem,
it is not hard to prove that the statement holds when
$k_\beta=-1\,.$ We present this proof already here.
\begin{lemma}
For each $\beta \in S\smallsetminus S_P$ the Hilbert polynomial
$\hp\Big(\displaystyle\sum_{\beta\in S\smallsetminus S_P} k_\beta
\varpi_\beta\Big)$ vanishes at $k_\beta=-1$.
\end{lemma}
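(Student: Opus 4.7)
The plan is to invoke the product formula of Theorem~\ref{teor formula hp},
\[
\hp(\varpi) \;=\; \prod_{\alpha \in R^+ \setminus R_P^+} \frac{\langle \varpi + \rho,\, \alpha^\vee \rangle}{\langle \rho,\, \alpha^\vee \rangle},
\]
and to exhibit a single factor in the numerator which vanishes identically (as a polynomial in the remaining variables $k_\gamma$, $\gamma \neq \beta$) once we specialize $k_\beta = -1$. The natural candidate is the factor indexed by $\alpha = \beta$ itself.

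First I would verify that $\beta$ actually contributes to the product, i.e.\ that $\beta \in R^+ \setminus R_P^+$. Since $\beta \in S \setminus S_P$, it is a positive root; and since every element of $R_P^+$ is a nonnegative integer combination of simple roots from $S_P$, a simple root outside $S_P$ cannot lie in $R_P^+$. Next, using the defining property $\langle \varpi_\gamma, \delta^\vee \rangle = \delta_{\gamma\delta}$ of the fundamental weights, together with the identity $\rho = \sum_{\delta \in S} \varpi_\delta$ recorded in Theorem~\ref{teor formula hp}, I would compute
\[
\langle \varpi + \rho,\, \beta^\vee \rangle \;=\; \sum_{\gamma \in S \setminus S_P} k_\gamma \,\langle \varpi_\gamma, \beta^\vee \rangle \;+\; \langle \rho, \beta^\vee \rangle \;=\; k_\beta + 1,
\]
so the $\alpha = \beta$ factor equals $(k_\beta+1)/\langle \rho, \beta^\vee \rangle = k_\beta + 1$. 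Specializing $k_\beta = -1$ makes this expression vanish identically in the other variables, hence $\hp$ vanishes as a polynomial in the remaining $k_\gamma$.

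There is essentially no obstacle here: once Theorem~\ref{teor formula hp} is available, the lemma reduces to the observation that the simple root $\beta$ itself contributes the linear factor $k_\beta + 1$ to the product, and the other factors play no role. The genuinely harder part will come in Theorem~\ref{roots of hil}, where for each $j \in \{1,\ldots,n_\beta-1\}$ one must identify a \emph{different} root $\alpha \in R^+ \setminus R_P^+$ whose associated linear factor $\langle \varpi+\rho,\alpha^\vee\rangle$ vanishes at $k_\beta = -j$; systematically enumerating such roots is precisely where the (modified) Kostant game will be needed.
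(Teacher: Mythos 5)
Your proposal is correct and follows exactly the paper's argument: the paper's proof is the one-line observation that, by Theorem \ref{teor formula hp}, $\langle \varpi+\rho, \beta^{\vee}\rangle=k_\beta+1$ is a factor of $\hp$, which is precisely the factor $\alpha=\beta$ you single out (after the routine check that $\beta\in R^+\setminus R_P^+$, which the paper leaves implicit). Nothing further is needed.
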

\begin{proof}
By Theorem \ref{teor formula hp}, $\langle \varpi+\rho,
\beta^{\vee}\rangle=k_\beta+1$ is a factor of $\hp$.
\end{proof}
\begin{corollary}\label{notadjacent corollary}
Theorem \ref{roots of hil} holds for a simple root $\beta$ not
adjacent to $P$.
\end{corollary}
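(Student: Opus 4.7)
The plan is to observe that Corollary \ref{notadjacent corollary} reduces to the lemma immediately preceding it, together with the computation of $n_\beta$ carried out in the earlier lemma on simple roots not adjacent to $P$.

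More precisely, Theorem \ref{roots of hil} requires us to show that $\hp$ vanishes at $k_\beta = -j$ for every $j = 1, \ldots, n_\beta - 1$. For a simple root $\beta \in S \setminus S_P$ which is not adjacent to $P$, the earlier lemma establishes that $n_\beta = 2$, so the index set for $j$ collapses to the single value $j = 1$. Thus, for such $\beta$, Theorem \ref{roots of hil} asserts nothing more than the vanishing of $\hp$ at $k_\beta = -1$.

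This single vanishing statement is exactly the content of the lemma just proved: by Theorem \ref{teor formula hp}, the factor $\langle \varpi + \rho, \beta^{\vee} \rangle = k_\beta + 1$ appears in the product formula for $\hp$, and this factor vanishes precisely when $k_\beta = -1$. Therefore the corollary follows at once, with no further combinatorial or algebraic work required; in particular, the Kostant-game machinery developed in the subsequent subsections is not needed for the case of roots away from the Dynkin diagram of $P$. The main content of Theorem \ref{roots of hil}, and hence the place where the Kostant game will genuinely be needed, is the case of roots adjacent to $P$, where $n_\beta$ can be arbitrarily large and the higher vanishing $k_\beta = -2, -3, \ldots, -(n_\beta - 1)$ is nontrivial.
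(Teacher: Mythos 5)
Your proposal is correct and is exactly the paper's (implicit) argument: the earlier lemma gives $n_\beta=2$ for $\beta\in S\setminus S_P$ not adjacent to $P$, so Theorem \ref{roots of hil} for such $\beta$ reduces to vanishing at $k_\beta=-1$, which is supplied by the factor $\langle\varpi+\rho,\beta^{\vee}\rangle=k_\beta+1$ in Theorem \ref{teor formula hp}. No difference from the paper's route.
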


\subsection{String of Coroots}

In order to prove Theorem \ref{roots of hil}, we want to show that
$k_\beta+j$ is a linear factor of the Hilbert polynomial
$$\
\hp\Big(\displaystyle\sum_{\beta\in S\smallsetminus S_P} k_\beta
\varpi_\beta\Big)=\operatorname{Ind}(\ll_\varpi)= \prod_{\alpha \in
R_+\setminus R^+_P}\frac{\langle \varpi+\rho,
\alpha^{\vee}\rangle}{\langle\rho, \alpha^{\vee}\rangle}
$$
for $\beta\in S\smallsetminus S_P$ and $j\in \{1, \ldots,
n_\beta-1\}\,.$ That means that for every $\beta\in S\smallsetminus
S_P$ we want to find a string of roots $\alpha_1, \ldots,
\alpha_{n_\beta-1}\in R^+\smallsetminus R^+_P$ such that
$$
\langle \varpi+\rho, \check{\alpha}_j\rangle= \big\langle
\displaystyle\sum_{\beta\in S\smallsetminus S_P} k_\beta
\varpi_\beta +\rho, \check{\alpha}_j\big\rangle=k_\beta+j
$$
for every $j=1, \ldots, n_\beta-1\,.$ Note that it follows from
Corollary \ref{notadjacent corollary} that this task is already
accomplished when $\beta$ is a simple root not adjacent to $P\,.$

The previous analysis motivates the definitions of strings of
coroots for a parabolic subgroup and a simple root adjacent to it
that we define below in this section. Before defining them formally
we set up some notation. Let $P$ be a parabolic subgroup of
$G_{\mathbb{C}}$ and $\beta\in S\smallsetminus S_P$ be a simple root
adjacent to $P\,.$ We denote by $P\cup \{\beta\}$ the parabolic
subgroup corresponding to $S_P\cup \{\beta\}\,.$ Let $\Gamma$ be the
Dynkin diagram of $G\,.$ Let $\Gamma_P$ and $\Gamma_P \cup \beta$ be
the subgraphs of $\Gamma$ corresponding to the sets of simple roots
$S_P$ and $S_P\cup \{\beta\},$ respectively.

\begin{definition}
A {\bf string of coroots} (for $P$ and $\beta$) is a string of
positive coroots of the form
$$\check{\beta}, \check{\beta}+ \gamma_1, \ldots, \check{\beta}+\gamma_l \in \check{R}_{P\cup \beta}^+,$$
such that every $\gamma_j$ can be written as a positive integer
linear combination of coroots in $\check{R}_P^+$ and
$$
\operatorname{ht}(\check{\beta} + \gamma_j)<
\operatorname{ht}(\check{\beta} + \gamma_{j+1})
$$
for every $1\leq j< l\,,$ where $\operatorname{ht}$ stands for the
height function defined on the set of positive coroots. The integer
$\operatorname{ht}(\check{\beta}+\gamma_l)$ will be called the
\textbf{length of the string}. A string of coroots is called {\bf
maximal } if its length is exactly equal to
$$
n_\beta-1=\sum_{\alpha\in R^+\smallsetminus R^+_P}\langle\alpha,
\check{\beta}\rangle-1=1-\sum_{\alpha \in R^+_P} \langle\alpha,
\check{\beta}\rangle\,.
$$
A string of coroots is called \textbf{good} if
$$
\operatorname{ht}(\check{\beta} + \gamma_{j})=j+1
$$
for every $1\leq j\leq l\,.$ A string of coroots that is not good is
called a \textbf{string with a jump}. A \textbf{gap of a string with
a jump} is an integer $j$ such that $\operatorname{ht}(\check{\beta}
+ \gamma_j)+1< \operatorname{ht}(\check{\beta} + \gamma_{j+1})\,.$

\end{definition}

\begin{proposition}\label{coroots from good strings}
If $\check{\beta}, \check{\beta}+\gamma_1, \ldots,
\check{\beta}+\gamma_l$ is a good string of coroots for $P$ and
$\beta$, then the Hilbert polynomial
$\hp(\varpi)=\hp\Big(\displaystyle\sum_{\beta\in S\smallsetminus
S_P} k_\beta \varpi_\beta\Big)$ vanishes at $k_\beta=-1, \ldots,
-(l+1)$.
\end{proposition}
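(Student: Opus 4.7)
The plan is to read off $l+1$ distinct linear factors of $\hp(\varpi)$ directly from the product formula of Theorem \ref{teor formula hp}. Given a good string $\check\beta,\check\beta+\gamma_1,\ldots,\check\beta+\gamma_l$, I set $\gamma_0:=0$, so that each $\check\beta+\gamma_j$ for $j=0,1,\ldots,l$ lies in $\check R^+_{P\cup\beta}$ with coefficient $1$ on $\check\beta$, hence in $\check R^+_{P\cup\beta}\setminus\check R^+_P$. Under the bijection $\alpha\leftrightarrow\check\alpha$ between positive roots and positive coroots of the subsystem generated by $S_P\cup\{\beta\}$, these elements correspond to distinct positive roots $\alpha_0=\beta,\alpha_1,\ldots,\alpha_l\in R^+_{P\cup\beta}\setminus R^+_P\subset R^+\setminus R^+_P$; distinctness is ensured by the strict monotonicity of the heights.

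The central calculation is to evaluate $\langle\varpi+\rho,\check\beta+\gamma_j\rangle$ viewed as a polynomial in the variables $\{k_{\beta'}\}_{\beta'\in S\smallsetminus S_P}$. Using $\rho=\sum_{\alpha\in S}\varpi_\alpha$ and the duality $\langle\varpi_\alpha,\check\alpha'\rangle=\delta_{\alpha,\alpha'}$, I find that $\langle\varpi,\check\beta\rangle=k_\beta$, $\langle\rho,\check\beta\rangle=1$, and $\langle\varpi,\gamma_j\rangle=0$ because $\varpi$ is a combination of $\{\varpi_{\beta'}\}_{\beta'\notin S_P}$ while $\gamma_j$ lies in the $\Z_{\geq 0}$-span of $\check S_P$. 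Finally $\langle\rho,\gamma_j\rangle=\operatorname{ht}(\gamma_j)$, which by the goodness hypothesis equals $\operatorname{ht}(\check\beta+\gamma_j)-1=j$. Putting these together,
$$
\langle\varpi+\rho,\check\beta+\gamma_j\rangle=k_\beta+(j+1).
$$

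Substituting this into the product formula of Theorem \ref{teor formula hp}, the factors indexed by $\alpha_0,\alpha_1,\ldots,\alpha_l$ contribute
$$
\prod_{j=0}^{l}\frac{\langle\varpi+\rho,\check\alpha_j\rangle}{\langle\rho,\check\alpha_j\rangle}=\prod_{j=0}^{l}\frac{k_\beta+(j+1)}{j+1}=\frac{(k_\beta+1)(k_\beta+2)\cdots(k_\beta+(l+1))}{(l+1)!}
$$
to $\hp(\varpi)$. The remaining factors in the product are indexed by roots in $R^+\smallsetminus R^+_P$ distinct from the $\alpha_j$'s, so they do not interact with the $l+1$ factors above. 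Regarded as a polynomial in $k_\beta$, $\hp(\varpi)$ therefore has $-1,-2,\ldots,-(l+1)$ among its roots, which is the claim.

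The computation is essentially mechanical once the definitions of good string and of $\rho$ are unpacked; the most delicate point is to check that each $\check\beta+\gamma_j$ genuinely arises as the coroot of a root in $R^+\smallsetminus R^+_P$ (rather than merely as a non-negative combination of simple coroots), so that the corresponding factor actually appears in the product formula. This follows from the hypothesis $\check\beta+\gamma_j\in\check R^+_{P\cup\beta}$ combined with the bijection between positive roots and positive coroots in any root subsystem, together with the observation that the coefficient $1$ on $\check\beta$ excludes membership in $\check R^+_P$.
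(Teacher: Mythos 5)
Your proof is correct and follows essentially the same route as the paper: one reads off the linear factors $\langle \varpi+\rho,\check{\beta}+\gamma_j\rangle$ from the product formula of Theorem \ref{teor formula hp} and uses the goodness condition $\operatorname{ht}(\check{\beta}+\gamma_j)=j+1$ (together with $\langle\varpi,\gamma_j\rangle=0$ and $\langle\rho,\check{\beta}+\gamma_j\rangle=\operatorname{ht}(\check{\beta}+\gamma_j)$) to identify them with $k_\beta+(j+1)$, the $j=0$ term giving the factor $k_\beta+1$. The additional verification that each $\check{\beta}+\gamma_j$ is genuinely the coroot of a root in $R^+\smallsetminus R^+_P$ is a detail the paper leaves implicit, but the argument is the same.
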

\begin{proof}
For each $j=1,\ldots,l$ there is a factor $\langle
\varpi+\rho,\check{\beta}+ \gamma_j\rangle$ in $\hp$ which, by the
conditions satisfied by good strings, is equal to
$$\langle \varpi+\rho,\check{\beta}+ \gamma_j\rangle=k_\beta + (j+1)$$
thus $\hp$ vanishes at $k_\beta=-(j+1)$.
\end{proof}

In order to prove Theorem \ref{roots of hil}, our aim now is to find
for every simple root adjacent to a parabolic subgroup a maximal
good string of coroots for the parabolic subgroup and the simple
root. Maximal good strings of coroots can be obtained via Kostant
games which we define in the following section.

\subsection{The Kostant game}\label{sc:kostant game}

Let $\Gamma$ be the Dynkin diagram of $G$ and $S=\{\alpha_1, \ldots,
\alpha_n\}$ be the set of simple roots.

A configuration is a vector $c=\sum_{i=1}^n c_i \alpha_i$ where
$c_i\in \Z_{\geq 0}\,.$ The \textbf{height $\operatorname{ht}(c)$}
of $c$ is the sum $\sum_{i=1}^nc_i\,.$ We can think of a
configuration as placing $c_i$ chips on the $i$-vertex of the Dynkin
diagram. The height of the configuration is the total number of
chips placed on the diagram.

For $i\in V,$ let $N(i)$ denote the neighbors of $i\,.$ For $j\in
N(i)$, we denote by $n_{i,j}$ the number of arrows coming to the
$i$-vertex from the $j$-vertex. In our convention, when the simple
roots $\alpha_i$ and $\alpha_j$ have the same length, then $n_{i,
j}=1=n_{j, i}.$ Also in our convention, for instance in the figure
below  $n_{i, j}=2$ and $n_{j, i}=1$\,.
\begin{figure}[!ht]
\centering
\begin{tikzpicture}[scale=1.4,transform shape]
  \draw (1,-0.06) -- (2,-0.06);
  \draw (1,0.06) -- (2,0.06);
 \node[draw,fill=white,shape=circle,scale=0.7] (b) at (1,0) {};
  \node[draw,fill=white,shape=circle,scale=0.7] (c) at (2,0) {};
 \node [scale=0.7,below] at (1,-0.2) {$j$};
 \node [scale=0.7,below] at (2,-0.2) {$i$};
 \draw (1.42,0.16) -- (1.58,0) -- (1.42,-0.16);
\end{tikzpicture}
\end{figure}
Note that $n_{i,j}=-\langle \alpha_j, \check{\alpha}_i\rangle\,.$

Let $c=\sum_{i=1}^nc_i\alpha_i$ be a configuration. We say that the
$i$-th entry of the configuration is:

\begin{itemize}
\item Happy if $c_i=\dfrac{1}{2}\sum_{j\in N(i)}n_{i, j}c_j\,.$

\item Unhappy if $c_i<\dfrac{1}{2}\sum_{j\in N(i)}n_{i, j}c_j\,.$

\item Excited if $c_i>\dfrac{1}{2}\sum_{j\in N(i)}n_{i, j}c_j\,.$
\end{itemize}

We play the Kostant game by first placing a chip at a vertex of the
Dynkin diagram. The goal of the Kostant game is to make every vertex
of the Dynkin diagram either happy or excited. The game is played as
follows. Given a configuration $c,$ we pick any unhappy vertex $i,$
and replace the number of chips $c_i$ at $i$ by
$$
c_i\mapsto -c_i+\sum_{j\in N(i)}n_{i, j}c_j\,.
$$
Equivalently, we replace $c$ by
\begin{align*}
s_{\alpha_i}(c):&=c-\langle c, \check{\alpha}_i\rangle\alpha_i=
\sum_{j=1}^n c_j \alpha_j - \langle \sum_{j=1}^n c_j \alpha_j,
\check{\alpha}_i\rangle\alpha_i\\
&=\sum_{j\ne i}c_j\alpha_j+\Big(-c_i+\sum_{j\in
N(i)}n_{i,j}c_j\Big)\alpha_i\,.
\end{align*}
Note that if the vertex $i$ is unhappy, then
$\operatorname{ht}(s_{\alpha_i}(c))>\operatorname{ht}(c)\,.$ It
follows from this remark that we play the Kostant game by
consecutively replacing one positive root by another of greatest
height. Indeed, the set of all the possible configurations of the
Kostant game played by initially placing a chip on a vertex of the
Dynkin diagram corresponds to the set of positive roots $R^+$. When
we start playing the game by placing one chip at one of the
vertices, the game will reach the same terminating configuration
regardless of the sequence of moves. If the Dynkin diagram is simply
laced, the game always leads to a unique terminating configuration
regardless of the vertex where we place the first chip. This unique
terminating configuration corresponds to the highest positive root.
If the Dynkin diagram is not simply-laced, any sequence of moves of
the Kostant game will lead to two possible terminating
configurations, and one of them corresponds to the highest positive
root. For these and other facts concerning the combinatorics of the
Kostant game we suggest the reader to check the class notes written
by E. Chen of the course \textit{``Topics on Combinatorics''} taught
by A. Postnikov at 2017 in MIT \cite{postnikov} and B. Elek's notes
on Reflection Groups \cite[Section 5.4]{elek}.

\begin{example}
In this example we play the Kostant game on the Dynkin diagram $A_4$
starting from its simple roots until we reach the configuration of
the highest positive root.

\begin{figure}[!ht]
\centering
\begin{tikzpicture}[scale=0.7,transform shape]
  \node[draw,shape=circle,scale=0.7] (a) at (-7,0) {$1$};
  \node[draw,fill=white,shape=circle,scale=0.7] (b) at (-6,0) {\phantom{$0$}};
  \node[draw,fill=white,shape=circle,scale=0.7] (c) at (-5,0) {\phantom{$0$}};
  \node[draw,fill=white,shape=circle,scale=0.7] (d) at (-4,0) {\phantom{$0$}};
  \draw(a) -- (b) -- (c) -- (d);
  \draw [->] (-5,-0.5) -- (-4,-1.5);

  \node[draw,shape=circle,scale=0.7] (a) at (-2,0) {\phantom{$0$}};
  \node[draw,fill=white,shape=circle,scale=0.7] (b) at (-1,0) {$1$};
  \node[draw,fill=white,shape=circle,scale=0.7] (c) at (0,0) {\phantom{$0$}};
  \node[draw,fill=white,shape=circle,scale=0.7] (d) at (1,0) {\phantom{$0$}};
   \draw(a) -- (b)--(c)-- (d);
   \draw [->] (-1,-0.5) -- (-2,-1.5);
  \draw [->] (0,-0.5) -- (1,-1.5);

  \node[draw,shape=circle,scale=0.7] (a) at (3,0) {\phantom{$0$}};
  \node[draw,fill=white,shape=circle,scale=0.7] (b) at (4,0) {\phantom{$0$}};
  \node[draw,fill=white,shape=circle,scale=0.7] (c) at (5,0) {$1$};
 \node[draw,fill=white,shape=circle,scale=0.7] (d) at (6,0) {\phantom{$0$}};
  \draw(a) -- (b)--(c)--(d);
  \draw [->] (4,-0.5) -- (3,-1.5);
  \draw [->] (5,-0.5) -- (6,-1.5);

  \node[draw,shape=circle,scale=0.7] (a) at (8,0) {\phantom{$0$}};
  \node[draw,fill=white,shape=circle,scale=0.7] (b) at (9,0) {\phantom{$0$}};
  \node[draw,fill=white,shape=circle,scale=0.7] (c) at (10,0) {\phantom{$0$}};
  \node[draw,fill=white,shape=circle,scale=0.7] (d) at (11,0) {$1$};
  \draw(a) -- (b)--(c)--(d);
  \draw [->] (9,-0.5) -- (8,-1.5);

%---------------------------------------------------------------------------

  \node[draw,shape=circle,scale=0.7] (a) at (-4.5,-2) {$1$};
  \node[draw,fill=white,shape=circle,scale=0.7] (b) at (-3.5,-2) {$1$};
  \node[draw,fill=white,shape=circle,scale=0.7] (c) at (-2.5,-2) {\phantom{$0$}};
  \node[draw,fill=white,shape=circle,scale=0.7] (d) at (-1.5,-2) {\phantom{$0$}};
  \draw(a) -- (b)--(c)--(d);
  \draw [->] (-2.5,-2.5) -- (-1.5,-3.5);

  \node[draw,shape=circle,scale=0.7] (a) at (0.5,-2) {\phantom{$0$}};
  \node[draw,fill=white,shape=circle,scale=0.7] (b) at (1.5,-2) {$1$};
  \node[draw,fill=white,shape=circle,scale=0.7] (c) at (2.5,-2) {$1$};
  \node[draw,fill=white,shape=circle,scale=0.7] (d) at (3.5,-2) {\phantom{$0$}};
  \draw(a) -- (b)--(c)--(d);
  \draw [->] (1.5, -2.5) -- (0.5, -3.5);
  \draw [->] (2.5, -2.5) -- (3.5, -3.5);

  \node[draw,shape=circle,scale=0.7] (a) at (5.5,-2) {\phantom{$0$}};
  \node[draw,fill=white,shape=circle,scale=0.7] (b) at (6.5,-2) {\phantom{$0$}};
  \node[draw,fill=white,shape=circle,scale=0.7] (c) at (7.5,-2) {$1$};
  \node[draw,fill=white,shape=circle,scale=0.7] (d) at (8.5,-2) {$1$};
  \draw(a) -- (b)--(c)--(d);
  \draw [->] (6.5, -2.5) -- (5.5, -3.5);

%------------------------------------------------------------------------

  \node[draw,shape=circle,scale=0.7] (a) at (-2,-4) {$1$};
  \node[draw,fill=white,shape=circle,scale=0.7] (b) at (-1,-4) {$1$};
  \node[draw,fill=white,shape=circle,scale=0.7] (c) at (-0,-4) {$1$};
  \node[draw,fill=white,shape=circle,scale=0.7] (d) at (1,-4) {\phantom{$0$}};
  \draw(a) -- (b)--(c)--(d);
  \draw [->] (0, -4.5) -- (1, -5.5);

  \node[draw,shape=circle,scale=0.7] (a) at (3,-4) {\phantom{$0$}};
  \node[draw,fill=white,shape=circle,scale=0.7] (b) at (4,-4) {$1$};
  \node[draw,fill=white,shape=circle,scale=0.7] (c) at (5,-4) {$1$};
  \node[draw,fill=white,shape=circle,scale=0.7] (d) at (6,-4) {$1$};
  \draw(a) -- (b)--(c)--(d);
   \draw [->] (4, -4.5) -- (3, -5.5);

%---------------------------------------------------------------------------

  \node[draw,shape=circle,scale=0.7] (a) at (0.5,-6) {$1$};
  \node[draw,fill=white,shape=circle,scale=0.7] (b) at (1.5,-6) {$1$};
  \node[draw,fill=white,shape=circle,scale=0.7] (c) at (2.5,-6) {$1$};
  \node[draw,fill=white,shape=circle,scale=0.7] (d) at (3.5,-6) {$1$};
  \draw(a) -- (b)--(c)--(d);

\end{tikzpicture}
\end{figure}

\end{example}

\subsection{The modified Kostant game}\label{sc:modified kostant game}

We modify the Kostant game on Dynkin diagrams of compact Lie groups
in order to construct maximal good strings. We modify the graph of
$\Gamma$ by adding an extra vertex adjacent only to a vertex $j$
of the graph. We denote the new vertex by $\tilde{j}\,.$ We also
draw $k$-arrows pointing from the new vertex $\tilde{j}$ to $j\,.$
We denote the new graph by $\Gamma_j^k\,.$ When $k=1,$ we just
denote it by $\Gamma_j\,.$ We play a modified version of the Kostant
game on $\Gamma_j^k$ by placing one chip at the vertex $\tilde{j},$
with the same rules as before but with a new rule in which the
vertex $\tilde{j}$ is \textit{always} happy. We call the resulting
game on $\Gamma_j^k$ the \textbf{modified Kostant game} at the
vertex $j$ with $k$-arrows. When $k=1,$ we just call the resulting
game the modified Kostant game at the vertex $j\,.$ We define
configurations and their heights as we did for the standard Kostant
game.

The proofs of the two theorems below will be given in the later
section.

\begin{theorem}\label{Main theorem1}
The modified Kostant game at a vertex of a Dynkin diagram of a
compact Lie groups leads to a unique terminating configuration.
\end{theorem}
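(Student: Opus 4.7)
The plan is to reduce the theorem to the classical fact that every $W$-orbit in the reflection representation contains a unique dominant vector, via an affine substitution that turns the modified moves into genuine Weyl reflections.

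First I would set up coordinates. Writing a configuration as $\mu=\sum_{i=1}^n c_i\alpha_i$ with respect to the simple roots of $\Gamma$, introduce the shifted vector $\nu:=\mu-k\varpi_j$, where $\varpi_j$ is the fundamental weight dual to $\check{\alpha}_j$. A short computation shows: for every vertex $i\neq j$ the unhappiness inequality $c_i<\tfrac{1}{2}\sum_{l\in N(i)}n_{i,l}c_l$ is equivalent to $\langle\nu,\check{\alpha}_i\rangle<0$, while at $j$ the extra contribution $k$ from the chip at $\tilde{j}$ makes the unhappiness condition $c_j<\tfrac{1}{2}\bigl(\sum_{l\in N_\Gamma(j)}n_{j,l}c_l+k\bigr)$ equivalent to $\langle\nu,\check{\alpha}_j\rangle<0$. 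For $i\neq j$ the standard move $\mu\mapsto s_{\alpha_i}(\mu)$ translates (since $\varpi_j$ is fixed by $s_{\alpha_i}$) into $\nu\mapsto s_{\alpha_i}(\nu)$; and for $i=j$ the modified move $\mu\mapsto s_{\alpha_j}(\mu)+k\alpha_j$ becomes, using $s_{\alpha_j}(\varpi_j)=\varpi_j-\alpha_j$, exactly $\nu\mapsto s_{\alpha_j}(\nu)$. Hence in the $\nu$-picture the modified Kostant game is nothing but: keep applying a simple reflection $s_{\alpha_i}$ at any vertex $i$ for which $\langle\nu,\check{\alpha}_i\rangle<0$, until no such vertex remains.

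With this reformulation, termination follows from two observations. The initial configuration $\mu^{(0)}=0$ gives $\nu^{(0)}=-k\varpi_j$, and since each step is a Weyl reflection, every $\nu$ that can be produced during play lies in the finite set $W\cdot(-k\varpi_j)$. At the same time, each move strictly increases the height $\operatorname{ht}(\mu)=\sum_i c_i$, because at an unhappy vertex $i$ the new $c_i$ strictly exceeds the old one while the other entries are unchanged (the modified rule at $j$ still falls under this because the unhappiness inequality is $2c_j<\sum n_{j,l}c_l+k$). Combining a strictly increasing height with a finite set of reachable configurations, any sequence of legal moves must terminate.

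The termination condition is precisely $\langle\nu,\check{\alpha}_i\rangle\geq 0$ for every $i$, i.e., $\nu$ lies in the closed fundamental Weyl chamber. Since the closed fundamental chamber is a fundamental domain for $W$, each $W$-orbit meets it in exactly one point; thus every finite sequence of legal moves terminates at the unique dominant element of $W\cdot(-k\varpi_j)$, so the terminating configuration $\mu=\nu+k\varpi_j$ is the same regardless of the choices made along the way. The only genuinely delicate point is the verification that the affine substitution correctly converts the modified move at $j$ into a true Weyl reflection; once the identity $s_{\alpha_j}(\varpi_j)=\varpi_j-\alpha_j$ is applied and the formulas matched, the rest of the proof rests only on standard facts about Coxeter groups.
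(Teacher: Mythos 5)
Your argument is correct, and it takes a genuinely different route from the paper. The paper first proves a correspondence theorem: sequences of moves of the modified Kostant game at the vertex $j$ encode exactly the reduced expressions of minimal length representatives in $W^{j}=W/W_j$, and then deduces both termination and uniqueness from the existence and uniqueness of the longest element $w_0\in W^{j}$ (any partial play extends to a play reaching the configuration of $w_0$, and the terminal state corresponds to $w_0$). You instead linearize the game directly: the shift $\nu=\mu-k\varpi_j$ converts every legal move, including the modified move at $j$, into a genuine simple reflection applied to $\nu$ (your verification via $s_{\alpha_j}(\varpi_j)=\varpi_j-\alpha_j$ is exactly right, given the paper's convention $n_{i,l}=-\langle\alpha_l,\check{\alpha}_i\rangle$), so that play is a walk in the finite orbit $W\cdot(-k\varpi_j)$, termination follows from the strictly increasing height, and uniqueness of the terminal state follows from the unique dominant representative in each $W$-orbit. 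Your shift is in spirit the same device as the auxiliary vector $\tilde{\beta}$ with $(\alpha_i,\tilde{\beta})=-\delta_{ij}$ used in the paper's correspondence theorem, but your uniqueness mechanism (fundamental domain property of the closed Weyl chamber) replaces the paper's (uniqueness of the longest element of $W^{j}$ and completion of reduced words). What each buys: your proof is shorter and self-contained for this statement, needing only standard facts about reflection groups; the paper's proof establishes the explicit bijection between plays and reduced expressions in $W^{j}$, which is then essential for Theorem \ref{Main theorem2} and for producing the maximal good strings of coroots, so it cannot be dispensed with in the paper's overall scheme.
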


\begin{remark}

Note that given a Dynkin diagram, if we denote by $h+1$ the height
of the final configuration of the modified Kostant game on the
Dynkin diagram at a fixed vertex, then the height of the terminating
configuration of the modified Kostant game at the same vertex but
with $k$-arrows equals $kh+1\,.$
\end{remark}

\begin{theorem}\label{Main theorem2}
Given a Dynkin diagram $\Gamma$ of a compact Lie group $G,$ if we
denote by $h_j+1$ the height of the unique terminating configuration
of the modified Kostant game on the Dynkin diagram of simple coroots
$\check{\Gamma}$ at the vertex $j,$ then
$$
\sum_{\alpha\in R^+}\alpha=\sum_{\alpha_j\in S}h_j \alpha_j\,.
$$
\end{theorem}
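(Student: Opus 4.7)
The plan is to identify the modified Kostant game at the vertex $j$ on the coroot Dynkin diagram $\check{\Gamma}$ with the classical Weyl-group process of carrying an antidominant weight to the dominant chamber, and to pin down the terminating configuration explicitly. To do so, I first introduce the substitution $d := c - \check{\varpi}_j$, where $c = \sum_i c_i \check{\alpha}_i$ is the original-vertex part of the running configuration and $\check{\varpi}_j \in \mathfrak{t}$ is the fundamental coweight dual to $\alpha_j$ (so that $\langle \alpha_l, \check{\varpi}_j\rangle = \delta_{lj}$). A direct check shows that in the variable $d$ every move of the modified game becomes a plain Weyl reflection $d \mapsto s_{\check{\alpha}_i}(d)$: at a vertex $i \ne j$ this is immediate since $s_{\check{\alpha}_i}$ fixes $\check{\varpi}_j$; at the vertex $j$ the modified move $c \mapsto c + (1 - \langle c, \alpha_j\rangle)\check{\alpha}_j$ is the affine shift of a reflection forced by the always-happy chip at $\tilde{j}$, and the offset $1$ is precisely the amount the substitution absorbs, so the move rearranges to $d \mapsto s_{\check{\alpha}_j}(d)$. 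Likewise, the ``not unhappy'' condition at every original vertex $i$ translates uniformly to $\langle d, \alpha_i\rangle \ge 0$.

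The initial configuration has $c_0 = 0$, hence the starting value of $d$ is $-\check{\varpi}_j$, which is antidominant. The game is then the classical Weyl-group elevator on $d$: apply reflections $s_{\check{\alpha}_i}$ at indices with $\langle d, \alpha_i\rangle < 0$ until $d$ is dominant. The moves preserve the Weyl orbit of $-\check{\varpi}_j$, and every such orbit contains a unique dominant element; combined with the uniqueness of the terminating configuration from Theorem \ref{Main theorem1}, this forces the terminal value of $d$ to be $-w_0(\check{\varpi}_j)$, where $w_0$ denotes the longest element of $W$ (since $w_0$ exchanges the dominant and antidominant chambers). Therefore the original-vertex part of the terminating configuration is $c^{*} = \check{\varpi}_j - w_0(\check{\varpi}_j)$.

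For the height computation, note that $\rho = \sum_{\alpha \in S} \varpi_\alpha$ satisfies $\langle \rho, \check{\alpha}_i\rangle = 1$ for every simple root, so the height of $c^{*} = \sum_i c^{*}_i \check{\alpha}_i$ in the $\check{\alpha}$-basis equals $\langle \rho, c^{*}\rangle$. Using the classical identity $w_0(\rho) = -\rho$ and the $W$-invariance of the inner product, $\langle \rho, w_0(\check{\varpi}_j)\rangle = \langle w_0(\rho), \check{\varpi}_j\rangle = -\langle \rho, \check{\varpi}_j\rangle$, and hence
\[
h_j \;=\; \langle \rho, \check{\varpi}_j - w_0(\check{\varpi}_j)\rangle \;=\; 2\,\langle \rho, \check{\varpi}_j\rangle.
\]
Writing $\rho = \sum_l r_l\, \alpha_l$ in the simple-root basis gives $\langle \rho, \check{\varpi}_j\rangle = r_j$, so $h_j = 2r_j$ is exactly the coefficient of $\alpha_j$ in $2\rho = \sum_{\alpha \in R^+}\alpha$. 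Summing $h_j \alpha_j$ over $j$ then produces the claimed identity $\sum_{\alpha_j \in S} h_j \alpha_j = \sum_{\alpha \in R^+} \alpha$.

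The principal technical point, and the step that needs the most care, is the first one: one must carefully check that the affine correction at vertex $j$ (forced by the extra chip at $\tilde{j}$) is exactly the shift that $d = c - \check{\varpi}_j$ absorbs, so that the modified game collapses to an ordinary Weyl-reflection process on $d$. Once that is secured, everything else follows from the standard Weyl-group facts that every orbit has a unique dominant representative and that $w_0(\rho) = -\rho$.
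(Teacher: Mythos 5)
Your proof is correct, but it takes a genuinely different route from the paper's. The paper first establishes (via an auxiliary vector $\tilde\beta$ adjoined to $\mathfrak{t}$ with $(\alpha_i,\tilde\beta)=-\delta_{ij}$) that plays of the modified game encode exactly the reduced expressions of the minimal-length representatives in $W^j$, and then proves the height formula by running a reduced expression of the longest element of $W^j$: the chips placed along the way are the $\alpha_j$-coefficients of its inversions, i.e.\ of all roots in $R^+\smallsetminus R_j^+$, so $h_j$ is the $\alpha_j$-coefficient of $\sum_{\alpha\in R^+}\alpha$. Your substitution $d=c-\check\varpi_j$ is in effect the concrete incarnation of the paper's formal $\tilde\beta$ (which pairs with the simple roots exactly as $-\check\varpi_j$ does), so the linearization of the moves into Weyl reflections is the common core; but from there you bypass reduced words and inversion sets entirely: you identify the terminating configuration in closed form as $\check\varpi_j-w_0(\check\varpi_j)$ via the unique dominant representative of a $W$-orbit, and compute the height from $w_0(\rho)=-\rho$, getting $h_j=2\langle\rho,\check\varpi_j\rangle$. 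This is shorter, yields an explicit formula for the terminal configuration, and in passing re-proves the uniqueness part of Theorem \ref{Main theorem1} (for termination you cite that theorem, which is legitimate; alternatively it follows from the strict height increase within the finite orbit). What the paper's longer route buys is the correspondence between game plays and reduced expressions in $W^j$, which it uses elsewhere (e.g.\ to produce the maximal strings of coroots); note also that your $w_0$ is the longest element of $W$ while the paper works with the longest element of $W^j$ -- these give the same answer because $W_j$ fixes $\check\varpi_j$, a point worth making explicit.
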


\begin{example}
We illustrate Theorem \ref{Main theorem2} with one example. Let
$\Gamma=A_4.$ We enumerate the vertices of $A_4$ from the left to
the right in increasing order. We denote by $h_i+1, i=1, \ldots, 4,$
the height of the unique terminating configuration of the modified
Kostant game on $\Gamma=\check{\Gamma}$ at the vertex $i\,.$ By
symmetry, $h_1=h_4$ and $h_2=h_3\,.$ We play the modified Kostant
game at the first and second vertex and illustrate all its possible
configurations in the figure below.

\begin{center}
\begin{tikzpicture}[scale=0.7]
  \node[draw,fill=black,shape=circle,scale=0.5] (a) at (0,0) {$\color{white}{1}$};
  \node[draw,fill=white,shape=circle,scale=0.5] (b) at (1,0) {\phantom{$0$}};
  \node[draw,fill=white,shape=circle,scale=0.5] (c) at (2,0) {\phantom{$0$}};
  \node[draw,fill=white,shape=circle,scale=0.5] (d) at (3,0) {\phantom{$0$}};
  \node[draw,fill=white,shape=circle,scale=0.5] (e) at (4,0) {\phantom{$0$}};
  \draw (a)--(b)--(c)--(d)--(e);
  \draw [->] (2,-0.5) -- (2,-1);
  \node[draw,fill=black,shape=circle,scale=0.5] (a) at (0,-1.5) {$\color{white}{1}$};
  \node[draw,fill=white,shape=circle,scale=0.5] (b) at (1,-1.5) {$1$};
  \node[draw,fill=white,shape=circle,scale=0.5] (c) at (2,-1.5) {\phantom{$0$}};
  \node[draw,fill=white,shape=circle,scale=0.5] (d) at (3,-1.5) {\phantom{$0$}};
  \node[draw,fill=white,shape=circle,scale=0.5] (e) at (4,-1.5) {\phantom{$0$}};
  \draw (a)--(b)--(c)--(d)--(e);
  \draw [->] (2,-2) -- (2,-2.5);
  \node[draw,fill=black,shape=circle,scale=0.5] (a) at (0,-3)
  {$\color{white}{1}$};
  \node[draw,fill=white,shape=circle,scale=0.5] (b) at (1,-3) {$1$};
  \node[draw,fill=white,shape=circle,scale=0.5] (c) at (2,-3) {$1$};
  \node[draw,fill=white,shape=circle,scale=0.5] (d) at (3,-3) {\phantom{$0$}};
  \node[draw,fill=white,shape=circle,scale=0.5] (e) at (4,-3) {\phantom{$0$}};
  \draw (a)--(b)--(c)--(d)--(e);
  \draw [->] (2,-3.5) -- (2,-4);
  \node[draw,fill=black,shape=circle,scale=0.5] (a) at (0,-4.5)
  {$\color{white}{1}$};
  \node[draw,fill=white,shape=circle,scale=0.5] (b) at (1,-4.5) {$1$};
  \node[draw,fill=white,shape=circle,scale=0.5] (c) at (2,-4.5) {$1$};
  \node[draw,fill=white,shape=circle,scale=0.5] (d) at (3,-4.5) {$1$};
  \node[draw,fill=white,shape=circle,scale=0.5] (e) at (4,-4.5) {\phantom{$0$}};
  \draw (a)--(b)--(c)--(d)--(e);
  \draw [->] (2,-5) -- (2,-5.5);
  \node[draw,fill=black,shape=circle,scale=0.5] (a) at (0,-6)
  {$\color{white}{1}$};
  \node[draw,fill=white,shape=circle,scale=0.5] (b) at (1,-6) {$1$};
  \node[draw,fill=white,shape=circle,scale=0.5] (c) at (2,-6) {$1$};
  \node[draw,fill=white,shape=circle,scale=0.5] (d) at (3,-6) {$1$};
  \node[draw,fill=white,shape=circle,scale=0.5] (e) at (4,-6) {$1$};
  \draw (a)--(b)--(c)--(d)--(e);
\end{tikzpicture}% pic 1
\qquad % <----------------- SPACE BETWEEN PICTURES
\begin{tikzpicture}[scale=0.7]
  \node[draw,shape=circle,scale=0.5] (a) at (-0.5,0) {\phantom{$0$}};
  \node[draw,fill=white,shape=circle,scale=0.5] (b) at (0.5,0) {\phantom{$0$}};
  \node[draw,fill=white,shape=circle,scale=0.5] (c) at (1.5,0) {\phantom{$0$}};
  \node[draw,fill=black,shape=circle,scale=0.5] (d) at (-0.5,1) {$\color{white}{1}$};
  \node[draw,fill=white,shape=circle,scale=0.5] (e) at (-1.5,0) {\phantom{$0$}};
  \draw (e) -- (a) -- (b)--(c);
  \draw (a) -- (d);
  \draw [->] (-0.5,-0.5) -- (-0.5,-1);
%---------------------------------------------------------------------------
\node[draw,shape=circle,scale=0.5] (a) at (-0.5,-2.5) {$1$};
  \node[draw,fill=white,shape=circle,scale=0.5] (b) at (0.5,-2.5) {\phantom{$0$}};
  \node[draw,fill=white,shape=circle,scale=0.5] (c) at (1.5,-2.5) {\phantom{$0$}};
  \node[draw,fill=black,shape=circle,scale=0.5] (d) at (-0.5,-1.5) {$\color{white}{1}$};
  \node[draw,fill=white,shape=circle,scale=0.5] (e) at (-1.5,-2.5) {\phantom{$0$}};
  \draw (e) -- (a) -- (b)--(c);
  \draw (a) -- (d);
  \draw [->] (-1,-3) -- (-2,-3.5);
  \draw [->] (0,-3) -- (1,-3.5);
%--------------------------------------------------------------------------------
\node[draw,shape=circle,scale=0.5] (a) at (-3,-5) {$1$};
  \node[draw,fill=white,shape=circle,scale=0.5] (b) at (-2,-5) {\phantom{$0$}};
  \node[draw,fill=white,shape=circle,scale=0.5] (c) at (-1,-5) {\phantom{$0$}};
  \node[draw,fill=black,shape=circle,scale=0.5] (d) at (-3,-4) {$\color{white}{1}$};
  \node[draw,fill=white,shape=circle,scale=0.5] (e) at (-4,-5) {$1$};
  \draw (e) -- (a) -- (b)--(c);
  \draw (a) -- (d);
\node[draw,shape=circle,scale=0.5] (a) at (2,-5) {$1$};
  \node[draw,fill=white,shape=circle,scale=0.5] (b) at (3,-5) {$1$};
  \node[draw,fill=white,shape=circle,scale=0.5] (c) at (4,-5) {\phantom{$0$}};
  \node[draw,fill=black,shape=circle,scale=0.5] (d) at (2,-4) {$\color{white}{1}$};
  \node[draw,fill=white,shape=circle,scale=0.5] (e) at (1,-5) {\phantom{$0$}};
  \draw (e) -- (a) -- (b)--(c);
  \draw (a) -- (d);
  \draw [->] (-2,-5.5) -- (-1,-6);
  \draw [->] (1,-5.5) -- (0,-6);
  \draw [->] (-3,-5.5) -- (-3,-6);
  \draw [->] (2,-5.5) -- (2,-6);
%--------------------------------------------------------------------------------------------------
  \node[draw,shape=circle,scale=0.5] (a) at (-3,-7.5) {$1$};
  \node[draw,fill=white,shape=circle,scale=0.5] (b) at (-2,-7.5) {$1$};
  \node[draw,fill=white,shape=circle,scale=0.5] (c) at (-1,-7.5) {\phantom{$0$}};
  \node[draw,fill=black,shape=circle,scale=0.5] (d) at (-3,-6.5) {$\color{white}{1}$};
  \node[draw,fill=white,shape=circle,scale=0.5] (e) at (-4,-7.5){$1$};
  \draw (e) -- (a) -- (b)--(c);
  \draw (a) -- (d);
  \node[draw,shape=circle,scale=0.5] (a) at (2,-7.5) {$1$};
  \node[draw,fill=white,shape=circle,scale=0.5] (b) at (3,-7.5) {$1$};
  \node[draw,fill=white,shape=circle,scale=0.5] (c) at (4,-7.5) {$1$};
  \node[draw,fill=black,shape=circle,scale=0.5] (d) at (2,-6.5) {$\color{white}{1}$};
  \node[draw,fill=white,shape=circle,scale=0.5] (e) at (1,-7.5){\phantom{$0$}};
  \draw (e) -- (a) -- (b)--(c);
  \draw (a) -- (d);
  \draw [->] (-2,-8) -- (-1,-8.5);
  \draw [->] (1,-8) -- (0,-8.5);
  \draw [->] (-3,-8) -- (-3,-8.5);
  \draw [->] (2,-8) -- (2,-8.5);
%--------------------------------------------------------------------------------------------------
  \node[draw,shape=circle,scale=0.5] (a) at (-3,-10) {$2$};
  \node[draw,fill=white,shape=circle,scale=0.5] (b) at (-2,-10) {$1$};
  \node[draw,fill=white,shape=circle,scale=0.5] (c) at (-1,-10) {\phantom{$0$}};
  \node[draw,fill=black,shape=circle,scale=0.5] (d) at (-3,-9) {$\color{white}{1}$};
  \node[draw,fill=white,shape=circle,scale=0.5] (e) at (-4,-10){$1$};
  \draw (e) -- (a) -- (b)--(c);
  \draw (a) -- (d);
  \node[draw,shape=circle,scale=0.5] (a) at (2,-10) {$1$};
  \node[draw,fill=white,shape=circle,scale=0.5] (b) at (3,-10) {$1$};
  \node[draw,fill=white,shape=circle,scale=0.5] (c) at (4,-10) {$1$};
  \node[draw,fill=black,shape=circle,scale=0.5] (d) at (2,-9) {$\color{white}{1}$};
  \node[draw,fill=white,shape=circle,scale=0.5] (e) at (1,-10){$1$};
  \draw (e) -- (a) -- (b)--(c);
  \draw (a) -- (d);
   \draw [->] (-2,-10.5) -- (-1,-11);
  \draw [->] (1,-10.5) -- (0,-11);
%--------------------------------------------------------------------------------------------------
  \node[draw,shape=circle,scale=0.5] (a) at (-0.5,-12.5) {$2$};
  \node[draw,fill=white,shape=circle,scale=0.5] (b) at (0.5,-12.5) {$1$};
  \node[draw,fill=white,shape=circle,scale=0.5] (c) at (1.5,-12.5) {$1$};
  \node[draw,fill=black,shape=circle,scale=0.5] (d) at (-0.5,-11.5) {$\color{white}{1}$};
  \node[draw,fill=white,shape=circle,scale=0.5] (e) at (-1.5,-12.5) {$1$};
  \draw (e) -- (a) -- (b)--(c);
  \draw (a) -- (d);
  \draw [->] (-0.5,-13) -- (-0.5,-13.5);
%--------------------------------------------------------------------------------------------------
  \node[draw,shape=circle,scale=0.5] (a) at (-0.5,-15) {$2$};
  \node[draw,fill=white,shape=circle,scale=0.5] (b) at (0.5,-15) {$2$};
  \node[draw,fill=white,shape=circle,scale=0.5] (c) at (1.5,-15) {$1$};
  \node[draw,fill=black,shape=circle,scale=0.5] (d) at (-0.5,-14) {$\color{white}{1}$};
  \node[draw,fill=white,shape=circle,scale=0.5] (e) at (-1.5,-15) {$1$};
  \draw (e) -- (a) -- (b)--(c);
  \draw (a) -- (d);
\end{tikzpicture}

\end{center}

From the figures above we conclude that $h_1=h_4=4, h_2=h_3=6$ and
Theorem \ref{Main theorem2} states that the sum of the positive
roots of $A_4$ equals
$$ \sum_{\alpha\in
R^+}\alpha=4\alpha_1+6\alpha_2+6\alpha_3+4\alpha_4\,.
$$

\end{example}

We would like to prove the existence of maximal good strings for a
parabolic group and a root adjacent to it from Theorem \ref{Main
theorem1} and Theorem \ref{Main theorem2}. Before doing so we review
the following two lemmas on roots whose proofs can be found for
instance in \cite[Section 9.4]{Humphreys1}.
\begin{lemma}
Let $\alpha, \beta$ be non-proportional roots. If $(\alpha,
\beta)>0$, then $\alpha-\beta$ is a root. If $(\alpha, \beta)<0,$
then $\alpha+\beta\,$ is a root.
\end{lemma}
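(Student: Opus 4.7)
The plan is to reduce the statement to a brief Cartan-integer argument grounded in the rank-two classification of root pairs. I recall that for any two roots $\alpha,\beta$ the Cartan integers $\langle\beta,\alpha^\vee\rangle$ and $\langle\alpha,\beta^\vee\rangle$ are both integers whose product equals $4\cos^2\theta$, where $\theta$ is the angle between $\alpha$ and $\beta$; moreover each of these integers carries the same sign as $(\alpha,\beta)$. Since $\alpha$ and $\beta$ are assumed non-proportional one has $\cos^2\theta<1$, so their product lies in $\{0,1,2,3\}$.

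Suppose first that $(\alpha,\beta)>0$. Then both Cartan integers are strictly positive integers whose product is at most $3$, so at least one of them equals $1$. If $\langle\beta,\alpha^\vee\rangle=1$, then the reflection $s_\alpha$, which preserves the root system, sends $\beta$ to $s_\alpha(\beta)=\beta-\langle\beta,\alpha^\vee\rangle\alpha=\beta-\alpha$; hence $\beta-\alpha$, and therefore also $\alpha-\beta$, is a root. If instead $\langle\alpha,\beta^\vee\rangle=1$, then applying $s_\beta$ to $\alpha$ produces $\alpha-\beta$ directly as a root.

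The case $(\alpha,\beta)<0$ follows immediately by applying the previous paragraph to the pair $(\alpha,-\beta)$, which consists of two non-proportional roots with positive inner product: one concludes that $\alpha-(-\beta)=\alpha+\beta$ is a root.

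The only nontrivial ingredient is the bound $\langle\beta,\alpha^\vee\rangle\langle\alpha,\beta^\vee\rangle\le 3$ for non-proportional roots, but this is a standard consequence of the axioms of a root system: closure under reflections forces the angle between any two non-proportional roots to be one of the finitely many values $\pi/2,\pi/3,\pi/4,\pi/6$ (or their complements). Once this discreteness is in hand, I do not expect any real obstacle; the verification is essentially two lines.
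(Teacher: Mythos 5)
Your argument is correct and is precisely the classical one: the paper does not prove this lemma itself but cites Humphreys, Section 9.4, and your Cartan-integer argument (integrality plus $4\cos^2\theta<4$ for non-proportional roots forces one Cartan integer to be $1$, then apply the corresponding reflection, and reduce the negative case to the positive one via $\beta\mapsto-\beta$) is exactly the standard proof found there. The closing appeal to the classification of angles is superfluous, since the bound $\langle\beta,\alpha^\vee\rangle\langle\alpha,\beta^\vee\rangle\le 3$ already follows from the integrality you invoked in your first paragraph, but this does not affect correctness.
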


\begin{lemma}\label{lemma9.4H}
Let $\alpha$ and $\beta$ be non-proportional roots. Let $r, q\in
\Z_{\geq 0}$ be the largest integers for which $\beta-r\alpha\in R,
\beta+q\alpha\in R.$ Then $\beta+i\alpha\in R$ for all $-r\leq i\leq
q\,.$ We call the set of roots $\beta+i\alpha\, (i\in \Z)$ the
\textbf{$\alpha$-string through $\beta\,.$}
\end{lemma}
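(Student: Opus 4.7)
The plan is to prove the lemma by contradiction: assume the $\alpha$-string through $\beta$ has a gap, and use the preceding lemma (on sums/differences of non-proportional roots) to derive a contradiction by pairing with $\alpha$ at both ends of the gap.

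First I would formalize the existence of a minimal gap. Suppose, for contradiction, that there is some $-r \leq i \leq q$ with $\beta + i\alpha \notin R$. Since $\beta - r\alpha$ and $\beta + q\alpha$ are roots, I can pick $p$ to be the largest integer with $-r \leq p$ such that $\beta + p\alpha \in R$ and $\beta + (p+1)\alpha \notin R$ while $\beta + s\alpha \in R$ for some $s > p+1$; correspondingly let $s$ be the smallest such integer. Then $s \geq p+2$, with $\beta + p\alpha,\, \beta + s\alpha \in R$ but $\beta + (p+1)\alpha,\, \beta + (s-1)\alpha \notin R$. Before applying the preceding lemma, I observe that $\beta + p\alpha$ and $\beta + s\alpha$ are not proportional to $\alpha$: if $\beta + p\alpha = k\alpha$ for some $k \in \Z$, then $\beta = (k-p)\alpha$ would contradict the non-proportionality of $\alpha$ and $\beta$, and likewise for $\beta + s\alpha$.

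Next I would apply the previous lemma twice. Applying it to the non-proportional pair $\alpha$ and $\gamma_p := \beta + p\alpha$: if $(\alpha, \gamma_p) < 0$ then $\gamma_p + \alpha = \beta + (p+1)\alpha$ would be a root, contradicting our choice of $p$; hence $(\alpha, \gamma_p) \geq 0$. Symmetrically, applying it to $\alpha$ and $\gamma_s := \beta + s\alpha$: if $(\alpha, \gamma_s) > 0$ then $\gamma_s - \alpha = \beta + (s-1)\alpha$ would be a root, so $(\alpha, \gamma_s) \leq 0$. Expanding these two inner products in terms of $(\alpha,\beta)$ and $(\alpha,\alpha)$ gives
\[
(\alpha, \beta) + p(\alpha, \alpha) \geq 0 \quad \text{and} \quad (\alpha, \beta) + s(\alpha, \alpha) \leq 0,
\]
and subtracting yields $(s-p)(\alpha, \alpha) \leq 0$, contradicting $s > p$ and $(\alpha, \alpha) > 0$. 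Therefore no gap can exist, and $\beta + i\alpha \in R$ for all $-r \leq i \leq q$.

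The proof is short and the only real step that needs care is the choice of the boundary indices $p$ and $s$ of a putative gap, together with the small check that $\beta + p\alpha$ and $\beta + s\alpha$ are non-proportional to $\alpha$ so that the preceding lemma actually applies. Both points follow immediately from the standing hypothesis that $\alpha$ and $\beta$ are non-proportional, so no serious obstacle is anticipated; the core of the argument is the trick of "straddling" the gap and using that $(\alpha,\alpha) > 0$.
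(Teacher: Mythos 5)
Your proof is correct: the paper does not prove this lemma itself but cites Humphreys, Section 9.4, and your gap-straddling argument (choosing $p$ and $s$ on either side of a putative gap, applying the preceding lemma to force $(\alpha,\beta)+p(\alpha,\alpha)\geq 0$ and $(\alpha,\beta)+s(\alpha,\alpha)\leq 0$, and contradicting $(\alpha,\alpha)>0$) is exactly that classical proof. The auxiliary check that $\beta+p\alpha$ and $\beta+s\alpha$ are non-proportional to $\alpha$ is handled correctly, so nothing is missing.
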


\begin{theorem}\label{maximal good strings}
Let $\Gamma$ be a Dynkin diagram and $\Gamma_P$ be a connected
subgraph. Let $\beta$ be a simple root adjacent to $\Gamma_P$ and
let $\alpha_j$ be the root in $\Gamma_P$ adjacent to $\beta\,.$ Let
us assume that there are $k$-arrows pointing from $\alpha_j$ to
$\beta\,.$

When we play the modified Kostant game on the Dynkin diagram of
coroots of $\Gamma_P$ at the $j$-vertex with $k$-arrows until it
reaches its terminating configuration, we obtain a maximal string of
coroots
$$
\check{\beta}, \check{\beta}+\gamma_1, \ldots,
\check{\beta}+\gamma_l\,.
$$
for $P$ and $\beta\,.$ This string of coroots can always be
completed into a good string of coroots for $P$ and $\beta\,.$
\end{theorem}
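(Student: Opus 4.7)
The plan is to interpret the modified Kostant game at the $j$-vertex of $\check\Gamma_P$ with $k$-arrows as the ordinary Kostant game on $\check\Gamma_{P\cup\{\beta\}}$ starting from the simple coroot $\check\beta$, but restricted so that one never acts at the vertex $\check\beta$ (the frozen chip at $\tilde j$ represents $\check\beta$). Each configuration then encodes a positive coroot of the form $\check\beta+\gamma$ with $\gamma\in\check R^+_P\cup\{0\}$, since a move at an unhappy vertex $i\in\check\Gamma_P$ applies the simple reflection $s_{\check\alpha_i}$ and sends positive coroots to positive coroots. Moreover, each such move strictly increases the height (adding a positive multiple of $\check\alpha_i$), so the sequence $\check\beta=c_0,c_1,\dots,c_l$ of configurations visited during the play is automatically a string of coroots for $P$ and $\beta$ in the sense of the definition preceding Proposition \ref{coroots from good strings}.

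To verify maximality, I would compute the height $H=\operatorname{ht}(c_l)$. The configuration $c_l$ is uniquely determined by Theorem \ref{Main theorem1}, and the remark following it gives $H=kh+1$, where $h+1$ is the analogous terminating height when the added vertex carries a single arrow. By Theorem \ref{Main theorem2} applied to $\Gamma_P$, the integer $h$ is the coefficient $h^P_j$ of $\alpha_j$ in $\sum_{\alpha\in R_P^+}\alpha$. On the other hand,
$$
n_\beta=2-\Big\langle\sum_{\alpha\in R_P^+}\alpha,\check\beta\Big\rangle=2-\sum_{i\in S_P}h^P_i\langle\alpha_i,\check\beta\rangle.
$$
Since a finite-type Dynkin diagram is a tree, $\alpha_j$ is the unique vertex of $S_P$ adjacent to $\beta$, and all other pairings $\langle\alpha_i,\check\beta\rangle$ vanish. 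Using $\langle\alpha_j,\check\beta\rangle=-k$, the sum collapses to $n_\beta=2+kh^P_j$, hence $H=kh^P_j+1=n_\beta-1$, which is precisely the length required for maximality.

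For the second assertion, I would fill in any jumps using Lemma \ref{lemma9.4H}. Whenever a single move sends $c_s$ to $c_{s+1}=c_s+m\check\alpha_i$ with $m>1$, the vectors $c_s$ and $\check\alpha_i$ are non-proportional (their $\check\beta$-coefficients are $1$ and $0$, respectively), so the $\check\alpha_i$-string through $c_s$ is unbroken by Lemma \ref{lemma9.4H}. Hence every $c_s+\ell\check\alpha_i$ with $0\le\ell\le m$ is a coroot, and each such is manifestly positive and lies in $\check R^+_{P\cup\{\beta\}}$. Inserting these intermediate coroots between $c_s$ and $c_{s+1}$ produces consecutive heights differing by exactly $1$; applying this surgery at every jump converts the game's output into a good string of the same, maximal, length.

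The main obstacle is the length computation in the second paragraph: matching $kh^P_j+1$ with $n_\beta-1$ demands the scaling remark after Theorem \ref{Main theorem1}, the height formula of Theorem \ref{Main theorem2}, and the tree structure of the Dynkin diagram, all carefully combined while keeping root-versus-coroot conventions straight. The completion into a good string, by contrast, is a direct application of the $\alpha$-string lemma.
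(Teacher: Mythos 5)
Your proposal is correct and follows essentially the same route as the paper: identify the game's configurations with positive coroots $\check\beta+\gamma$ (the frozen chip playing the role of $\check\beta$), get maximality by combining the $k$-arrow scaling remark with Theorem \ref{Main theorem2} and the fact that $\check\beta$ pairs nontrivially only with $\alpha_j$ among the simple roots of $S_P$, and fill any jumps using the unbroken-string Lemma \ref{lemma9.4H}. The only differences are cosmetic (you spell out the non-proportionality hypothesis and the identity $n_\beta=2+kh_j^P$, which the paper states as $1-\langle\sum_{\alpha\in R_P^+}\alpha,\check\beta\rangle=kh_j+1$).
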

\begin{proof}
The string of coroots
$$
\check{\beta}, \check{\beta}+\gamma_1, \ldots,
\check{\beta}+\gamma_l\,.
$$
is indeed a string of coroots for $P$ and $\beta$ because the string
is obtained by playing the standard Kostant game on the coroots of
$\Gamma_P\cup \beta$ by leaving always one chip at the
$\beta$-vertex.

We write
$$
\sum_{\alpha\in R_P^+}\alpha=h_j\alpha_j+\ldots\,.
$$
The coroot $\check{\beta}$ is orthogonal to every root in $S_P$
except $\alpha_j,$ thus
$$
\bigl\langle\sum_{\alpha\in R^+_P}\alpha,
\check{\beta}\bigr\rangle=\langle h_j\alpha_j+\ldots,
\check{\beta}\rangle=h_j\langle \alpha_j, \check{\beta}\rangle=-k
h_j.
$$
The string of coroots is maximal because Theorem \ref{Main theorem2}
implies that
$$
\operatorname{ht}(\check{\beta}+\gamma_l)=k
h_j+1=1-\bigl\langle\sum_{\alpha\in \Gamma^+_P}\alpha,
\check{\beta}\bigr\rangle\,.
$$
Let us suppose that
$$
\check{\beta}, \check{\beta}+\gamma_1, \ldots,
\check{\beta}+\gamma_l\,
$$
is a string with a jump. That means that there exist $r$ such that
$$
\operatorname{ht}(\check{\beta}+\gamma_r)+1<\operatorname{ht}(\check{\beta}+\gamma_{r+1})\,.
$$
As we have obtained the string by playing the Kostant game, there
exist a simple root $\alpha\in S_P$ and a positive integer $n$ such
that
$$
\check{\beta}+\gamma_{r+1}=\check{\beta}+\gamma_r+n\check{\alpha}\,.
$$
Lemma \ref{lemma9.4H} implies that
$$
\check{\beta}+\gamma_r+m\check{\alpha}
$$
is a coroot for every $0\leq m\leq n\,$ (a $\check{\alpha}$-string
through $\check{\beta}+\gamma_r$ is unbroken). We can always fill
all the gaps of the maximal string in this way to get a good string
of coroots for $P$ and $\beta\,,$ and we are done.
\end{proof}

\begin{proof}[Proof of Theorem \ref{roots of hil}]
Let $\Gamma$ be the Dynkin diagram of $G$ and $\Gamma_P$ be the
subgraph corresponding to $P\,$ (not necessarily connected). Assume
that $\beta$ is a simple root adjacent to $P$. We show that there
exists a maximal good string of coroots for $P$ and $\beta$ and the
Theorem will follow from Proposition \ref{coroots from good strings}\,.

We consider the connected components
$\Gamma_{P_j}=\Gamma_{P_j}(\beta)$ of $\Gamma_P$ which are adjacent
to $\beta\,.$ Note that there are at most three such components. Let
$$
\check{\beta},\check{\beta}+\gamma_1^j,\ldots,\check{\beta}+\gamma_{l_j}^j\,.
$$
be a maximal string of coroots for $P_j$ and $\beta\,.$ We know from
the previous Theorem that they exist. The mechanics of the standard
Kostant game and Lemma \ref{lemma9.4H} imply that we can glue the
strings to obtain a good string of coroots, for instance the
following is a good string of coroots for $P$ and $\beta$
\begin{align*}
\check{\beta},\check{\beta}+\gamma_1^1,\ldots,\check{\beta}+\gamma_{l_1}^1,
\check{\beta}+\gamma_{l_1}^1+\gamma_1^{2},\ldots,
\check{\beta}+\gamma_{l_1}^1+\gamma_{l_2}^{2},\ldots,\check{\beta}+\sum_{j}\gamma_{l_j}^j\,.
\end{align*}
Now we verify that it is maximal:
\begin{align*}
n_\beta-1&=1-\sum_{\alpha\in R_P^+}\langle \alpha,
\check{\beta}\rangle=1-\sum_j\sum_{\alpha\in R_{P_j}^+}\langle
\alpha,
\check{\beta}\rangle\\&=1+\sum_j\Bigl(\operatorname{ht}(\check{\beta}+\gamma_{l_j}^j)-1\Bigr)
=\operatorname{ht}\Bigl(\check{\beta}+\sum_j\gamma_{l_j}^j\Bigr)\,.
\end{align*}
The Theorem now follows from Proposition \ref{coroots from good
strings}\,.

\end{proof}

\subsection{Proofs of Theorem \ref{Main theorem1} and Theorem
\ref{Main theorem2}}

In this section we give the proofs of Theorem \ref{Main theorem1}
and Theorem \ref{Main theorem2}. It turns out that the
configurations of a modified Kostant game correspond to minimal
length representatives of the quotient of the Weyl group of a
compact Lie group with a parabolic subgroup. We show Theorem
\ref{Main theorem1} and Theorem \ref{Main theorem2} using this fact
together with the fact that the set of minimal length
representatives of the quotient contains a unique maximal length
representative whose set of inversions is the set of positive roots
not belonging to the set of positive roots spanned by the simple
roots in the parabolic. Before giving formal explanations and proofs
of our claims, we recall some notation.

Let $G$ be a compact Lie group and $T$ be a maximal torus. Let $R$
be the root system defined by $G$ and $T$ with a choice of positive
roots $R^+$ and simple roots $S=\{\alpha_i\}_{i=1}^n.$ We denote by
$\Gamma$ the corresponding Dynkin diagram. We identify the Lie
algebra $\mathfrak{t}$ of $T$ with its dual $\mathfrak{t}^*$ via an
invariant inner product $( \cdot,\cdot )$ on the Lie algebra
$\mathfrak{g}$ of $G\,.$ For every $\alpha\in R,$ we denote by
$s_\alpha$ the reflection defined by $\alpha:$
\begin{align*}
s_\alpha:\mathfrak{t} &\to \mathfrak{t} \\
x &\mapsto x-\langle x, \alpha\rangle\, \check{\alpha}\\
&=x-2\dfrac{(x, \alpha)}{(\alpha, \alpha)}\alpha\,.
\end{align*}
We denote by $W$ the Weyl group generated by the set of simple
reflections $\{s_\alpha\mid \alpha\in S \}\,.$ When
$\alpha=\alpha_i$ is a simple root, we denote the corresponding
reflection just by $s_{\alpha_i}=s_i\,.$

We define the \textbf{length} $l(w)$ of an element $w\in W$ as the
smallest positive integer such that $w$ can be written as a product
of simple reflections
$$
w=s_{i_t}\cdot\ldots\cdot s_{i_k}\cdot\ldots\cdot s_{i_1}
$$
and call such an expression \textbf{reduced}.

The length of an element in the Weyl group can be characterized in
another way. Let
$$
I(w)=\{\alpha\in  R^+ \mid w(\alpha)\in -R^+\}
$$
be the \textbf{inversion set of $w$}. Then
$l(w)=\operatorname{Car}(I(w))\,.$ A reduced expression
$s_{i_t}\ldots\cdot s_{i_k}\cdot\ldots s_{i_1}$ of $w$ allows us to
enumerate all the positive roots in $I(w)$ as follows: define
$$
\tilde{\alpha}_k=s_{i_1}\ldots
s_{i_{k}}(\alpha_{i_{k+1}})\,\,\,\text{with
}\tilde{\alpha}_t=\alpha_{i_1}\,,
$$
then $I(w)=\{ \tilde{\alpha}_k\}_{k=1}^t,$ with all
$\tilde{\alpha}_k$ different (see for instance
\cite[Section 1.7]{Humphreys2}).

For $j\in \{1, \ldots, n\},$ we will denote by $W_{j}$ the parabolic
subgroup of $W$ spanned by the set of simple reflections
$\{s_i\}_{i\ne j}\,.$ Likewise, we denote by $R_j$ the set of roots
spanned by the set of simple roots $\{\alpha_{i}\}_{i\ne j}\,.$ We
denote the set of minimal length representatives of the quotient
$W/W_{j}$ by
\begin{align*}
W^{j}:%&=%\{w\in W \mid w(\alpha_i)>0\text{ for all }i\ne j\}\\
&=\{w\in W \mid I(w)\subset R^+\backslash R_j^+\}\,.
\end{align*}

%\begin{lemma} Let $w=s_{i_t}\ldots\cdot s_{i_1}\in W,$ and define
%\begin{align*}
%w_l&:=s_{i_l}\cdot\ldots\cdot s_{i_2}\cdot s_{i_1}\\
%\tilde{\alpha}_l&:=w^{-1}_l(\alpha_{i_{l+1}})\,.
%\end{align*}
%Then $w\in W^j$ and $s_{i_t}\ldots\cdot s_{i_1}$ is a reduced
%expression if and only if
%$$
%I(w)=\{\tilde{\alpha}_l\}_{l=1}^t\subset R^+\backslash R^+_j\,.
%$$
%\end{lemma}

\begin{theorem}
A sequence of moves of the modified Kostant game on the Dynkin
diagram of coroots of $\Gamma$ at the vertex $j$ encodes the reduced
expression of some element in $W^{j}\,.$ Conversely, any reduced
expression of an element in $W^j$ can be obtained in this way.
\end{theorem}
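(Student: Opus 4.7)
The plan is to match each legal sequence of moves $(i_1,\ldots,i_t)$ of the modified Kostant game with the word $w_t:=s_{i_t}s_{i_{t-1}}\cdots s_{i_1}$ in $W$, and to prove inductively and simultaneously that this word is reduced, that $w_t\in W^j$, and that the configuration after the $t$-th move equals $c_t=w_t(\check\alpha_{\tilde j})$. The first thing I would record is the structural observation that, since $\tilde j$ is adjacent only to $j$, every $s_i$ of $\Gamma$ with $i\neq j$ fixes $\check\alpha_{\tilde j}$, so $W_j$ lies inside the stabiliser of $\check\alpha_{\tilde j}$; together with the rule that $\tilde j$ is always happy (so the coefficient of $\check\alpha_{\tilde j}$ remains $1$ throughout the game), this means every reachable configuration is a well-defined element of the orbit $W\cdot\check\alpha_{\tilde j}$, and the Weyl-group elements $w_t$ associated to the game live in $W$ itself rather than in the larger Weyl group of the extended diagram.

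The crux is the following local lemma I would prove next: for $v\in W^j$ and $c=v(\check\alpha_{\tilde j})$, vertex $i$ is unhappy at $c$ if and only if $s_iv$ is a reduced expression that still lies in $W^j$. A short calculation gives
\[
\langle\alpha_i,c\rangle=\langle\alpha_i,v(\check\alpha_{\tilde j})\rangle=\langle v^{-1}(\alpha_i),\check\alpha_{\tilde j}\rangle=-k\,a_j,
\]
where $a_j$ denotes the $\alpha_j$-coefficient of the root $v^{-1}(\alpha_i)\in R$; the last equality uses that $\check\alpha_{\tilde j}$ pairs nontrivially only with $\alpha_{\tilde j}$ and $\alpha_j$ (with values $2$ and $-k$), and that $v^{-1}(\alpha_i)$ has zero $\alpha_{\tilde j}$-component because $v\in W$. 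Hence $\langle\alpha_i,c\rangle<0$ iff $a_j>0$. Since in a finite root system the coefficients of a root have the same sign, this is equivalent to $v^{-1}(\alpha_i)\in R^+\setminus R_j^+$, which is in turn exactly the condition that $l(s_iv)=l(v)+1$ with the new inversion $v^{-1}(\alpha_i)$ lying outside $R_j^+$, i.e.\ that $s_iv$ is a reduced element of $W^j$.

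Both directions of the theorem now follow formally from this lemma. For the forward direction I would induct on $t$: applying the lemma to $v=w_{t-1}$ (which is in $W^j$ by the induction hypothesis) yields that $w_t=s_{i_t}w_{t-1}$ is reduced and in $W^j$, while $c_t=s_{i_t}(c_{t-1})=w_t(\check\alpha_{\tilde j})$. For the converse, given a reduced expression $w=s_{j_1}\cdots s_{j_l}$ of an element of $W^j$, I would first observe that every suffix $v_k:=s_{j_{l-k+1}}\cdots s_{j_l}$ is a reduced word (substrings of reduced words are reduced) lying in $W^j$: indeed, if $u=s_iv$ is a reduced expression with $u\in W^j$, then $I(v)\subset I(u)\subset R^+\setminus R_j^+$, hence $v\in W^j$. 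Applying the local lemma in reverse at each stage then shows that firing the vertices $(j_l,j_{l-1},\ldots,j_1)$ in this order is a legal Kostant play that realises the prescribed reduced expression.

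The main obstacle I expect is the bookkeeping inside the local lemma, especially the need to reconcile the paper's Kostant-game conventions ($n_{i,k}=-\langle\alpha_k,\check\alpha_i\rangle$, with the asymmetries introduced by non-simply-laced edges) with the pairing between $\mathfrak t^*$ and $\mathfrak t$, and to keep straight whether one is pairing roots or coroots of $\Gamma$ with simple coroots of the extended diagram $\Gamma_j^k$ (which need not be of finite type). All of this should however reduce to the single observation that the $\alpha_j$-coefficient of $v^{-1}(\alpha_i)$ simultaneously controls the happiness of vertex $i$ at $c$ and whether the inversion $v^{-1}(\alpha_i)$ added by $s_i$ lands in $R_j^+$ or outside it.
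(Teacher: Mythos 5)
Your proposal is correct and follows essentially the same route as the paper: your identification of the configuration with $w_t(\check\alpha_{\tilde j})$ is exactly the paper's auxiliary vector $\tilde\beta$ with its $W$-invariant extended pairing, and your local happiness criterion via the $\alpha_j$-coefficient of $v^{-1}(\alpha_i)$ is precisely the paper's claim that the number of chips placed at each move is the coefficient $k^l$, with positivity of all $k^l$ equivalent to having a reduced expression of an element of $W^{j}$. The only difference is organizational: you package this as a step-by-step ``local lemma'' and spell out the converse via nested inversion sets of suffixes, which the paper states more tersely.
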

\begin{proof}
We encode a sequence of moves of the modified Kostant game with an
ordered arrangement of integers which indicates the position of the
vertices where we place the chips every time that we play the
modified Kostant game. So for instance the arrangement $(i_1, i_2,
\cdots, i_t)$ indicates that in our first move we place chips on the
$i_1$-vertex, then in our second move we place chips on the
$i_2$-vertex, and we end by placing chips on the $i_t$-vertex. Note
that at every step we have not encoded the number of chips that we
place on each vertex but we explain now how to obtain this number.

Given an arrangement $(i_1, i_2, \cdots, i_t)$ that codifies a
sequence of moves of the modified Kostant game, we define a sequence
$\{w_1, \ldots, w_t\}$ of elements in the Weyl group by
$$
w_l:=s_{i_l}\ldots s_{i_2}s_{i_1}
$$
and a set of roots $\{\tilde{\alpha}_1, \tilde{\alpha}_2, \ldots,
\tilde{\alpha}_t\}$ by

\begin{equation}\label{roots}
\tilde{\alpha}_{l}=w_l^{-1}(\alpha_{i_{l+1}})=s_{i_1} s_{i_2}\ldots
s_{i_l}(\alpha_{i_{l+1}})\,
\end{equation}
for $1\leq l < t$ and $\tilde{\alpha}_t=\alpha_j\,.$ Note that
$w_1=s_j\,.$

We write
$$
\tilde{\alpha}_{l}=k^{l}\alpha_j+\ldots\,
$$
as a linear combination of simple roots. We claim that $k^l$ is the
number of chips that we locate at the $i_{l+1}$-vertex  when we play
the modified Kostant game.

We construct an auxiliary vector space by adding an element
$\tilde{\beta}$ to the basis of simple coroots $\{\check{\alpha}_1,
\ldots, \check{\alpha}_n\}$ and extend the invariant inner product
defined on $\mathfrak{t}$ to a bilinear product on
$\mathfrak{t}\oplus \langle \tilde{\beta}\rangle$ by saying that
$$
(\tilde{\beta},\tilde{\beta})=2 \,\,,\,\, (\alpha_i,
\tilde{\beta})=-\delta_{ij}\,
$$
for $j=1, \ldots, n\,.$ We also extend the action of the Weyl group
$W$ defined on $\mathfrak{t}$ to $\mathfrak{t}\oplus \langle
\tilde{\beta}\rangle\,$ by
$$
s_{\alpha_i}(\tilde{\beta})=\tilde{\beta}-(\alpha_i,
\tilde{\beta})\check{\alpha_i}=\tilde{\beta}+\delta_{ij}\check{\alpha}_j\,
$$
for every simple root $\alpha_i\,.$ The bilinear product that we
define for $\mathfrak{t}\oplus \langle \tilde{\beta}\rangle$ is
invariant with respect to the action of $W$ that we just defined on
it.

The elements
$$
w_{l+1}(\tilde{\beta}):=s_{i_{l+1}}(w_l(\tilde{\beta}))=w_l(\tilde{\beta})-(\alpha_{i_{l+1}},
w_l(\tilde{\beta}))\check{\alpha}_{i_{l+1}}\,
$$
follow the same recursion formula defined by the sequence of moves
$(i_1, i_2, \ldots, i_t)$ of the modified Kostant game at the vertex
$j,$ i.e., if
$$
w_l(\tilde{\beta})=\sum_{i=1}^nc_i\check{\alpha}_i+\tilde{\beta},
$$
and $N(i_{l+1})$ is the set of vertices in the Dynkin diagram that
are adjacent to $i_{l+1}$ and $\check{n}_{i, j}:=-(\alpha_i,
\check{\alpha}_j)$ is the number of arrows coming to $i$ from $j$ in
the Dynkin diagram of simple coroots (that is the same as the number
of arrows $n_{j, i}$ from $i$ to $j$ in the Dynkin diagram of simple
roots), then
\begin{align*}
w_{l+1}(\tilde{\beta})&=\sum_{i=1}^nc_i\check{\alpha}_i+\tilde{\beta}-\Bigl(\alpha_{i_{l+1}}, \sum_{i=1}^nc_i\check{\alpha}_i+\tilde{\beta}\Bigr)\check{\alpha}_{i_{l+1}}\\
&=\sum_{i\ne
i_{l+1}}c_i\check{\alpha}_i+\tilde{\beta}+\Bigl(-c_{i_{l+1}}-\sum_{k\in
N(i_{l+1})} c_{k}(\alpha_{i_{l+1}},
\check{\alpha}_{k})+\delta_{ji_{l+1}}\Bigr)\check{\alpha}_{i_{l+1}}\\
&=\sum_{i\ne
i_{l+1}}c_i\check{\alpha}_i+\tilde{\beta}+\Bigl(-c_{i_{l+1}}+\sum_{k\in
N(i_{l+1})}\check{n}_{i_{l+1},k}
c_{k}+\delta_{ji_{l+1}}\Bigr)\check{\alpha}_{i_{l+1}}\,.
\end{align*}
As the bilinear form that we define for $\mathfrak{t}\oplus \langle
\tilde{\beta}\rangle$ is invariant with respect to the action of
$W,$ we get
$$
k^l=-(\tilde{\beta}, \tilde{\alpha}_l)=-(\tilde{\beta},
w_l^{-1}(\alpha_{i_{l+1}}))=-(w_l(\tilde{\beta}),
\alpha_{i_{l+1}})=\dfrac{w_{l+1}(\tilde{\beta})-w_{l}(\tilde{\beta})}{\check{\alpha}_{i_{l+1}}}
$$
and the claim follows.

The Theorem now follows from the fact that $k^l>0$ for all $1\leq
l\leq t$ if and only if $\{\tilde{\alpha}_{1}, \ldots,
\tilde{\alpha}_t\}\subset R^+\backslash R^{+}_{j}\,.$ The later is
equivalent to say that $w_l\in W^{j}$ and $w_l=s_{i_l}\ldots
s_{i_2}s_{i_1}$ is a reduced expression for all $1\leq l \leq t\,.$

\end{proof}

Now we are ready to give the proofs of Theorem \ref{Main theorem1}
and Theorem \ref{Main theorem2}:

\begin{proof}[Proof of Theorem \ref{Main theorem1} and Theorem \ref{Main theorem2}]

We keep the notation of the proof of the previous Theorem. We show
first that the modified Kostant game at the vertex $j$ terminates.
Let $w_0$ be the longest element in $W^{j},$ i.e, the unique element
in $W^{j}$ that sends all positive roots in $R^+\smallsetminus
R^+_{j}$ to negative roots.

Let $(i_1, i_2, \ldots, i_t)$ be an arrangement that encodes a
sequence of moves of the modified Kostant game. The previous Theorem
implies that
$$
s_{i_t}\ldots s_{i_2}s_{i_1}
$$
is a reduced expression of an element in $W^{j}\,.$ We can always
complete the reduced expression into a reduced expression of $w_0,$
i.e., there exist simple roots $\alpha_{i_{t+1}}, \ldots,
\alpha_{i_m}$ such that
$$
w_0=s_{i_m}\ldots s_{i_{t+1}} s_{i_t}\ldots s_{i_2} s_{i_1}
$$
is a reduced expression.

The previous statement implies that the arrangement
$$
(i_1, i_2, \ldots, i_t, i_{t+1}, \ldots, i_m)
$$
encodes a sequence of moves of the modified Kostant game. In
particular, the modified Kostant game terminates. To end the proof
of Theorem \ref{Main theorem1}, we need to show that the game ends
in a unique terminating state. But this follow from the fact the
longest element in $W^j$ is unique.

%Let us suppose that for the arrangement $(i_1, i_2, \ldots, i_t),$
%we are able later to place chips at two different vertices, let us
%say the $i_{t+1}$-vertex and the $i_{t+1}'$-vertex, and the number
%of arrows connecting the two vertices in the Dynkin diagram equals
%$k=n_{i_{t+1},i_{t+1}'}n_{i_{t+1}', i_{t+1}},$ then
%$$
%(i_1, i_2, \ldots, i_t, \underbrace{i_{t+1}, i_{t+1}', i_{t+1},
%i_{t+1}'}_{k+2}) $$ and $$(i_1, i_2, \ldots, i_t,
%\underbrace{i_{t+1}', i_{t+1}, i_{t+1}', i_{t+1}}_{k+2})
%$$
%are both possible arrangements for the modified Kostant game, and
%they both allow us to place the same amount of chips at each vertex
%of the modified Dynkin diagram because
%$(s_{i_{t+1}}s_{i_{t+1}'})^{k+2}=1\,.$  The uniqueness part of
%Theorem \ref{Main theorem1} will now follow from the \textit{Roman
%Lemma} which states that given a finite connected directed graph
%without self-loops such that for every vertex  with at least two
%outgoing edges, we can find two converging paths of the same length
%to some other vertex, then the graph has exactly one end-point, and
%all directed paths eventually reach the same end-point. Here the
%graph is the set of possible configurations of the modified Kostant
%game (see for instance \cite{P}).

We are done with the proof of Theorem \ref{Main theorem1} and now we
continue with the proof of Theorem \ref{Main theorem2}. Let us
assume now that
$$
w_0=s_{i_m}\cdot\ldots\cdot s_{i_2} s_{i_1}
$$
is a reduced expression, and let $\{\tilde{\alpha}_1,
\tilde{\alpha}_2, \ldots, \tilde{\alpha}_m\}$ be the set of roots
defined in Equation \ref{roots}. We know that this sets equals to
$R^+\backslash R^+_{j}$ and also from the proof of the previous
Proposition that if we write
$$
\tilde{\alpha}_l=k^l\alpha_j+\ldots,
$$
then $k^l$ is the number of chips that we locate at the $i_l$-vertex
when we play the Kostant game. Thus
$$
h_j=\sum_l k^l= -\sum_{l}(\tilde{\alpha}_l, \tilde{\beta})=
-\sum_{\alpha\in R^+\backslash R^+_{j}}(\alpha,
\tilde{\beta})=-\sum_{\alpha\in R^+}(\alpha, \tilde{\beta}),
$$
and we are done.

\end{proof}

We illustrate the overall idea of the proof of Theorem \ref{roots of
hil} with the following Example.

\begin{example}
Let $S=\{\alpha_1, \alpha_2, \alpha_3, \alpha_4\}$ be the set of
simple roots of $F_4$ with Dynkin diagram shown below:
\begin{figure}[!ht]
\centering
\begin{tikzpicture}[scale=1.4,transform shape]
  \node[draw,shape=circle,scale=0.7] (a) at (0,0) {};
  \node[draw,shape=circle,scale=0.7] (d) at (3,0) {};
  \draw (1,-0.06) -- (2,-0.06);
  \draw (1,0.06) -- (2,0.06);
 \node[draw,fill=white,shape=circle,scale=0.7] (b) at (1,0) {};
  \node[draw,fill=white,shape=circle,scale=0.7] (c) at (2,0) {};
   \draw(a) -- (b)
        (c) -- (d);
 \node [scale=0.7,below] at (0,-0.2) {$\alpha_1$};
 \node [scale=0.7,below] at (1,-0.2) {$\alpha_2$};
 \node [scale=0.7,below] at (2,-0.2) {$\alpha_3$};
 \node [scale=0.7,below] at (3,-0.2) {$\alpha_4$};
 \draw (1.42,0.16) -- (1.58,0) -- (1.42,-0.16);
\end{tikzpicture}
\end{figure}
Assume that $S_P=\{\alpha_1, \alpha_2, \alpha_3\}\,.$

\begin{enumerate}

\item First we select the simple roots adjacent to $S_P$ in the Dynkin
diagram. In this case the only one is $\beta=\alpha_4$ and it is
adjacent to $\alpha_3$ in $S_P\,.$

\item We play the modified Kostant game on the Dynkin diagram of
coroots $\{\check{\alpha}_1, \check{\alpha}_2, \check{\alpha}_3\}$
at the $3$-vertex until it terminates to produce a string of coroots
for $P$ and $\beta$:

\begin{figure}[!ht]
\centering
\begin{tikzpicture}[scale=0.8,transform shape]
  \node[draw,shape=circle,scale=0.7] (a) at (0,0) {};
  \node[draw,fill=black,shape=circle,scale=0.7] (d) at (3,0) {};
  \draw (1,-0.06) -- (2,-0.06);
  \draw (1,0.06) -- (2,0.06);
 \node[draw,fill=white,shape=circle,scale=0.7] (b) at (1,0) {};
  \node[draw,fill=white,shape=circle,scale=0.7] (c) at (2,0) {};
   \draw(a) -- (b)
        (c) -- (d);
 \node [scale=0.7,below] at (0,-0.2) {$\check{\alpha}_1$};
 \node [scale=0.7,below] at (1,-0.2) {$\check{\alpha}_2$};
 \node [scale=0.7,below] at (2,-0.2) {$\check{\alpha}_3$};
 \node [scale=0.7,below] at (3,-0.2) {$\check{\beta}=\check{\alpha}_4$};
 \draw (1.58,0.16) -- (1.42,0) -- (1.58,-0.16);
\end{tikzpicture}
\end{figure}

%\phantom{$0$}

\begin{figure}[!ht]
\centering
\begin{tikzpicture}[scale=0.8,transform shape]
  \node[draw,shape=circle,scale=0.5] (a) at (0,0) {\phantom{$0$}};
  \node[draw,fill=white,shape=circle,scale=0.5] (b) at (1,0) {\phantom{$0$}};
  \node[draw,fill=white,shape=circle,scale=0.5] (c) at (2,0) {\phantom{$0$}};
  \node[draw,fill=black,shape=circle,scale=0.5] (d) at (3,0) {$\color{white}{1}$};
  \draw(a) -- (b)
       (c) -- (d);
   \draw (1.18,-0.06) -- (1.82,-0.06);
   \draw (1.18,0.06) -- (1.82,0.06);
   \draw (1.58,0.16) -- (1.42,0) -- (1.58,-0.16);
   \draw [->](1.5,-0.25) -- (1.5,-0.75);

%----------------------------------------------------------------------------------------

 \node[draw,shape=circle,scale=0.5] (a) at (0,-1.0) {\phantom{$0$}};
 \node[draw,fill=white,shape=circle,scale=0.5] (b) at (1,-1.0) {\phantom{$0$}};
 \node[draw,fill=white,shape=circle,scale=0.5] (c) at (2,-1.0) {$1$};
 \node[draw,fill=black,shape=circle,scale=0.5] (d) at (3,-1.0) {$\color{white}{1}$};
 \draw(a) -- (b)
      (c) -- (d);
 \draw (1.18,-1.06) -- (1.82,-1.06);
 \draw (1.18,-0.94) -- (1.82,-0.94);
 \draw (1.58,-0.84) -- (1.42,-1) -- (1.58,-1.16);
 \draw [->](1.5,-1.25) -- (1.5,-1.75);

%----------------------------------------------------------------------------------------

 \node[draw,shape=circle,scale=0.5] (a) at (0,-2.0) {\phantom{$0$}};
 \node[draw,fill=white,shape=circle,scale=0.5] (b) at (1,-2.0) {$2$};
 \node[draw,fill=white,shape=circle,scale=0.5] (c) at (2,-2.0) {$1$};
 \node[draw,fill=black,shape=circle,scale=0.5] (d) at (3,-2.0) {$\color{white}{1}$};
  \draw (1.18,-2.06) -- (1.82,-2.06);
  \draw (1.18,-1.94) -- (1.82,-1.94);
   \draw(a) -- (b)
        (c) -- (d);
  \draw (1.58,-1.84) -- (1.42,-2) -- (1.58,-2.16);

  \draw [->](0.45,-2.25) -- (-0.45,-3.25);
  \draw [->](2.55,-2.25) -- (3.45,-3.25);

%----------------------------------------------------------------------------------------

 \node[draw,shape=circle,scale=0.5] (a) at (-3,-3.5) {$2$};
 \node[draw,fill=white,shape=circle,scale=0.5] (b) at (-2,-3.5) {$2$};
 \node[draw,fill=white,shape=circle,scale=0.5] (c) at (-1,-3.5) {$1$};
 \node[draw,fill=black,shape=circle,scale=0.5] (d) at (0,-3.5) {$\color{white}{1}$};
  \draw (-1.82,-3.56) -- (-1.18,-3.56);
  \draw (-1.82,-3.44) -- (-1.18,-3.44);
   \draw(a) -- (b)
        (c) -- (d);
  \draw (-1.42,-3.34) -- (-1.58,-3.5) -- (-1.42,-3.66);

 \node[draw,shape=circle,scale=0.5] (a) at (3,-3.5) {\phantom{$0$}};
 \node[draw,fill=white,shape=circle,scale=0.5] (b) at (4,-3.5) {$2$};
  \node[draw,fill=white,shape=circle,scale=0.5] (c) at (5,-3.5) {$2$};
  \node[draw,fill=black,shape=circle,scale=0.5] (d) at (6,-3.5) {$\color{white}{1}$};
  \draw (4.18,-3.56) -- (4.82,-3.56);
  \draw (4.18,-3.44) -- (4.82,-3.44);
   \draw(a) -- (b)
        (c) -- (d);
  \draw (4.58,-3.34) -- (4.42,-3.5) -- (4.58,-3.66);

  \draw [->](-0.45,-3.75) -- (0.45,-4.75);
  \draw [->](3.45,-3.75) -- (2.55,-4.75);

%----------------------------------------------------------------------------------------

  \node[draw,shape=circle,scale=0.5] (a) at (0,-5) {$2$};
  \node[draw,fill=white,shape=circle,scale=0.5] (b) at (1,-5) {$2$};
  \node[draw,fill=white,shape=circle,scale=0.5] (c) at (2,-5) {$2$};
  \node[draw,fill=black,shape=circle,scale=0.5] (d) at (3,-5) {$\color{white}{1}$};
  \draw (1.18,-5.06) -- (1.82,-5.06);
  \draw (1.18,-4.94) -- (1.82,-4.94);
  \draw(a) -- (b)
       (c) -- (d);
 \draw (1.58,-4.84) -- (1.42,-5) -- (1.58,-5.16);
 \draw [->](1.5,-5.25) -- (1.5,-5.75);

%----------------------------------------------------------------------------------------

 \node[draw,shape=circle,scale=0.5] (a) at (0,-6) {$2$};
 \node[draw,fill=white,shape=circle,scale=0.5] (b) at (1,-6) {$4$};
 \node[draw,fill=white,shape=circle,scale=0.5] (c) at (2,-6) {$2$};
 \node[draw,fill=black,shape=circle,scale=0.5] (d) at (3,-6) {$\color{white}{1}$};
  \draw (1.18,-6.06) -- (1.82,-6.06);
  \draw (1.18,-5.94) -- (1.82,-5.94);
   \draw(a) -- (b)
        (c) -- (d);
 \draw (1.58,-5.84) -- (1.42,-6) -- (1.58,-6.16);
 \draw [->](1.5,-6.25) -- (1.5,-6.75);

%----------------------------------------------------------------------------------------

 \node[draw,shape=circle,scale=0.5] (a) at (0,-7) {$2$};
 \node[draw,fill=white,shape=circle,scale=0.5] (b) at (1,-7) {$4$};
 \node[draw,fill=white,shape=circle,scale=0.5] (c) at (2,-7) {$3$};
 \node[draw,fill=black,shape=circle,scale=0.5] (d) at (3,-7) {$\color{white}{1}$};
  \draw (1.18,-7.06) -- (1.82,-7.06);
  \draw (1.18,-6.94) -- (1.82,-6.94);
   \draw(a) -- (b)
        (c) -- (d);
 \draw (1.58,-6.84) -- (1.42,-7) -- (1.58,-7.16);

\end{tikzpicture}
\end{figure}

In this case we obtain two arrangements that encode the moves of the
Kostant game until it ends: $(3, 2, 1, 3, 2, 3)$ and $(3, 2, 3, 1,
2, 3)\,.$ The corresponding strings of coroots are $\{
\check{\beta}, \check{\alpha}_3+\check{\beta},
2\check{\alpha}_2+\check{\alpha}_3+\check{\beta},
2\check{\alpha}_1+2\check{\alpha}_2+\check{\alpha}_3+ \check{\beta},
2\check{\alpha}_1+2\check{\alpha}_2+2\check{\alpha}_3+
\check{\beta},
2\check{\alpha}_1+4\check{\alpha}_2+2\check{\alpha}_3+
\check{\beta},
2\check{\alpha}_1+4\check{\alpha}_2+3\check{\alpha}_3+
\check{\beta}\}$ and $\{ \check{\beta},
\check{\alpha}_3+\check{\beta},
2\check{\alpha}_2+\check{\alpha}_3+\check{\beta},
2\check{\alpha}_2+2\check{\alpha}_3+ \check{\beta},
2\check{\alpha}_1+2\check{\alpha}_2+2\check{\alpha}_3+
\check{\beta},
2\check{\alpha}_1+4\check{\alpha}_2+2\check{\alpha}_3+
\check{\beta},
2\check{\alpha}_1+4\check{\alpha}_2+3\check{\alpha}_3+
\check{\beta}\},$ respectively.

\item It is possible that the strings have gaps, but whenever there
is a gap between two coroots, we can always fill the gaps. For
instance, we fill the gaps for the string of coroots encoded by $(3,
2, 3, 1, 2, 3)$ and obtain the following good string $\{
\check{\beta},
\check{\alpha}_3+\check{\beta},\check{\alpha}_2+\check{\alpha}_3+\check{\beta},
2\check{\alpha}_2+\check{\alpha}_3+\check{\beta},
2\check{\alpha}_2+2\check{\alpha}_3+ \check{\beta},
\check{\alpha}_1+2\check{\alpha}_2+2\check{\alpha}_3+ \check{\beta},
2\check{\alpha}_1+2\check{\alpha}_2+2\check{\alpha}_3+
\check{\beta},
2\check{\alpha}_1+3\check{\alpha}_2+2\check{\alpha}_3+
\check{\beta},
2\check{\alpha}_1+4\check{\alpha}_2+2\check{\alpha}_3+
\check{\beta},
2\check{\alpha}_1+4\check{\alpha}_2+3\check{\alpha}_3+
\check{\beta}\}.$

\item The sequence of moves of the modified Kostant game at the 3-vertex
gives a reduced expression of the longest element of the set of
minimal length representatives of the quotient $W_{\alpha_1,
\alpha_2, \alpha_3}/W_{\alpha_1, \alpha_2}\,.$ With this reduced
expression, we obtain all the positive roots spanned by $\{\alpha_1,
\alpha_2, \alpha_3\}$ that pair non-trivial with $\beta, $ i.e., the
roots of the form
$$
k_1\alpha_1+k_2\alpha_2+k_3\alpha_3
$$
where $k_1, k_2, k_3$ are non-negative integers and $k_3> 0\,.$ For
instance, the arrangement of moves $(3, 2, 3, 1, 2, 3)$ gives us the
reduced expression
$$
s_{3}s_{2}s_{1}s_{3}s_{2}s_{3}
$$
and we obtain the following roots
\begin{itemize}
\item $\tilde{\alpha}_1=\alpha_3$
\item $\tilde{\alpha}_2=s_{3}(\alpha_2)=\alpha_2+2\alpha_3$
\item $\tilde{\alpha}_3=s_{3}s_{2}(\alpha_3)=\alpha_2+\alpha_3$
\item $\tilde{\alpha}_4=s_{3}s_{2}s_{3}(\alpha_1)=\alpha_1+\alpha_2+2\alpha_3$
\item $\tilde{\alpha}_5=s_{3}s_{2}s_{3}s_{1}(\alpha_2)=\alpha_1+2\alpha_2+2\alpha_3$
\item $\tilde{\alpha}_6=s_{3}s_{2}s_{3}s_{1}s_{2}(\alpha_3)=\alpha_1+\alpha_2+\alpha_3$
\end{itemize}
Note that the coefficient $k^l$ of $\alpha_3$ in $\tilde{\alpha}_l$
equals the number of chips that we place at the Dynkin diagram every
time we play the Kostant game. The resulting good string is maximal
because
\begin{align*}
\operatorname{ht}(2\check{\alpha}_1+4\check{\alpha}_2&+3\check{\alpha}_3+
\check{\beta})=\sum_{l=1}^6 k^l+1\\&=-\sum_{\alpha\in R^+_P}(\alpha,
\check{\beta})+1=10\,
\end{align*}
and the Hilbert polynomial $H_{P}(k_4)$ is divided by
$\prod_{l=1}^{10}(k_4+l)\,.$

\end{enumerate}
\end{example}

\end{document}